\newcommand\PP{\mathbb P}
\newcommand\C{\mathbb C}
\newcommand\R{\mathbb R}
\newcommand\Z{\mathbb Z}
\newcommand{\z}{\zeta}
\newcommand{\eps}{\varepsilon}
\newcommand\Aut{\operatorname{Aut}}
\renewcommand\Im{\operatorname{Im}}
\newcommand\ms{\operatorname{MS}}
\newcommand{\bra}{\langle}
\newcommand{\ket}{\rangle}
\newcommand{\wed}{\wedge}
\newcommand{\del}{\partial}
\newcommand{\A}{\mathcal{A}}
\newcommand{\B}{\mathcal{B}}
\newcommand{\G}{\mathcal{G}}
\newcommand{\M}{\mathcal{M}}
\newcommand{\T}{\mathcal{T}}
\newcommand{\W}{\mathcal{W}}
\newcommand{\X}{\mathcal{X}}
\newcommand{\Xsf}{\mathcal{X}^{\operatorname{sf}}}
\renewcommand{\S}{\text{\fontfamily{phv}\selectfont\small S}}
\newcommand{\U}{\text{\fontfamily{phv}\selectfont\small U}}
\newcommand{\dt}{\operatorname{DT}}
\renewcommand{\sf}{\operatorname{sf}}
\newcommand{\inst}{\operatorname{inst}}
\renewcommand{\d}{\delta}
\newcommand{\m}{\gamma_m}
\renewcommand{\th}{\theta}
\newcommand{\om}{\omega}
\newcommand{\Om}{\Omega}
\makeatletter \@addtoreset{equation}{section} \makeatother
\newtheorem{thm}{Theorem}
\newtheorem{lem}[thm]{Lemma}
\newtheorem{cor}[thm]{Corollary}
\newtheorem{conj}[thm]{Conjecture}
\newenvironment{rmk}{\noindent\textbf{Remark}.}{\\}
\newenvironment{exm}{\noindent\textbf{Example}.}{\\}
\title[]{Joyce-Song wall-crossing as an asymptotic expansion}
\author{Jacopo Stoppa}
\date{}
\address{}
\email{}
\begin{document}
\begin{abstract} We conjecture that the Joyce-Song wall-crossing formula for Don\-aldson-Thomas invariants arises naturally from an asymptotic expansion in the field theoretic work of Gaiotto, Moore and Neitzke. This would also give a new perspective on how the formulae of Joyce-Song and Kontsevich-Soibelman are related. We check the conjecture in many examples.\\
\noindent \textbf{MSC2010:} 14D21, 14N35.
\end{abstract} 
\maketitle 
\section{Introduction} 
Donaldson-Thomas invariants \cite{rich} are the virtual counts of Gieseker or Mumford semistable coherent sheaves with fixed Chern character $\alpha$ on a Calabi-Yau threefold $X$ with $H^1(\mathcal{O}_X) = 0$. A complete theory in this generality has been developed in \cite{js}. More generally (often conjecturally) one can replace $\operatorname{Coh}(X)$ with a suitable 3-Calabi-Yau category endowed with a Bridgeland stability condition $\sigma$. The main work in this direction is \cite{ks}. 

The virtual count $\dt(\alpha, \sigma)$ is then a locally constant function of a stability condition $\sigma$ with values in $\mathbb{Q}$. However when $\sigma$ crosses certain real codimension 1 subvarieties of  the space of stability conditions (the walls) the invariants $\dt(\alpha,\sigma)$ jump in a complicated, universal way. One way to understand this wall-crossing behaviour is the Joyce-Song formula, equation (78) in \cite{js}. 

Our starting point is an observation of Joyce in \cite{joy} page 58: ``The transformation laws for Calabi-Yau 3-fold invariants [...] will also be written in terms of sums over graphs, and the author believes these may have something to do with Feynman diagrams in physics". 

Explicitly, in Joyce-Song theory the wall-crossing is given by:
\begin{align}\label{joyceFormula}
\nonumber\dt(\alpha, \sigma_+) = \sum_{n \geq 1}\sum_{\alpha_1 + \dots + \alpha_n = \alpha}&\frac{(-1)^{n-1}}{2^{n-1}}\U(\alpha_1, \dots, \alpha_n; \sigma_{\mp})\\
&\cdot\sum_{\T}\prod_{\{i \to j\}\subset \T}(-1)^{\bra\alpha_i, \alpha_j\ket}\bra\alpha_i, \alpha_j\ket\prod_k\dt(\alpha_k, \sigma_-)
\end{align}
summing over \emph{effective decompositions} $\sum_{i} \alpha_i$ of the $K$-theory class $\alpha$ (weighted by the combinatorial coefficients $\U$) and \emph{ordered trees} $\T$ (with vertices labelled by $\{1, \dots, n\}$). The brackets here denote the Euler form. The details are explained e.g. in \cite{js} Section 5. The coefficients $\U(\alpha_1, \dots, \alpha_n; \sigma_{\mp})$ are complicated functions of the cohomology classes $\alpha_i$ and of the slopes $\mu^{\pm}(\alpha)$ (or of some analogue notion, e.g. central charges $Z_{\pm}(\alpha_i)$ or reduced Hilbert polynomials $p(\alpha_i)$). Determining these coefficients is the main practical difficulty in applying the formula \eqref{joyceFormula}.

Naively the formula \eqref{joyceFormula} seems to be at odds with Joyce's remark: after all there is nothing here that could play the role of a coupling constant, and explicit examples show that the contributions of trees of different sizes may all have the same magnitude, with a lot of cancellation occurring, see e.g. \cite{me}, \cite{mps}.

The purpose of this paper is to point out that a possible solution to this puzzle, which is valid at least in the context of many examples originating from physical theories, follows naturally from the work of  Gaiotto, Moore an Neitzke \cite{gmn}. Before we explain roughly how this works in the rest of this introduction, we summarize the discussion below by the slogan that while \eqref{joyceFormula} is not itself an asymptotic expansion, it is the footprint of such an expansion, that is what remains of it when we approach a certain singular locus in the theory.

From a mathematical viewpoint the main object of study in \cite{gmn} is a set of (exponential, holomorphic) Darboux coordinates $\X_{\gamma}(\z)$ on a moduli space of singular Higgs bundles $\M$ (belonging to a certain class, which we will specify later in concrete examples). We always fix the gauge group $SU(2)$. The hyperk\"ahler metric constructed from $\X_{\gamma}(\z)$ is conjecturally the Hitchin metric $g_R$ (depending on a positive parameter $R$). As we will recall, while in general there is no closed formula for these coordinates, there exists however a natural asymptotic expansion for $\X_{\gamma}(\z)$ (equation \eqref{perturbative} below) around the so-called semiflat coordinates $\Xsf_{\gamma}(\z)$, the expansion parameter being the volume $R^{-1}$ of the fibres of the Hitchin fibration $\det\!: \M \to \B$, as $R \to \infty$ (where $\mathcal{B}$ is an affine space of meromorphic quadratic differentials). The terms in this asymptotic expansion are indexed by labelled trees $\T$, and the contribution of a tree $\T$ with $n$ vertices at generic points of $\M$ is of order less than $e^{-n R C}$, as $R \to \infty$ (for a certain constant $C > 0$). 

Donaldson-Thomas type invariants in this context arise from the physically defined BPS spectrum $\Omega(\gamma; u)$, a locally constant $\Z$-valued function on $\B$. One can then make a formal definition $\dt(\alpha; u) := \sum_{k \geq 1} \frac{\Omega(\alpha/k; u)}{k^2}$. A precise mathematical definition of the BPS spectrum $\Omega(\alpha; u)$, as well as the identification of the numbers $\dt(\alpha; u)$ with suitable Donaldson-Thomas invariants, is the object of much current investigation (in particular work in progress of Bridgeland and Smith \cite{smith}). As we will briefly mention, the heuristic geometric interpretation of the numbers $\Omega(\gamma; u)$ is that they enumerate special trajectories of the quadratic differential $\lambda^2(u)$  (or rather of any of its rotations $e^{i\theta}\lambda^2(u)$), representing the homology class $\gamma$. Moreover in all the examples we will consider the BPS spectrum $\Omega(\gamma; u)$ could be defined rigorously in terms of semistable representations of a suitable quiver associated with $\M$. But for most of the time in this paper we will leave aside these deeper aspects, and concentrate only on the wall-crossing behaviour of these invariants. Eventually we will arrive at a conjecture which is independent of the particular formulation of \cite{gmn}.

Now the leading corrections (of order $e^{-R C}$) to $\Xsf_{\gamma}(\z)$ are easily determined in terms of the BPS spectrum $\Omega(\gamma; u)$. However the estimate $e^{-n R C}$ for the contribution of a tree $\T$ with $n$ vertices is only valid away from a codimension $1$ subset $\ms \subset \B$, the so-called wall of marginal stability. Indeed in general the contribution of $\T$ has a jump across $\ms$ which is of leading order $e^{-R C}$. Continuity of the hyperk\"ahler metric requires cancellation, and so the existence of a corresponding leading order correction across the wall. We conjecture, and prove in a number of examples, that this procedure yields the Joyce-Song wall-crossing formula \eqref{joyceFormula}. Let $\mathcal{T}'$ be a $\Gamma$-labelled tree.
\begin{conj}\label{mainConj} The total contribution to wall crossing given by all the choices of a \emph{root} for $\mathcal{T}'$ in the Gaiotto-Moore-Neitzke asymptotic expansion\footnote{In general this contribution is only well defined up to certain singular integrals, and it is necessary to supplement the conjecture with one about their behaviour. We will give a precise statement at the end of Section \ref{Nf2Sect}.} \eqref{perturbative} matches the total contribution to the Joyce-Song formula \eqref{joyceFormula} given by all the possible \emph{orientations} of $\mathcal{T}'$.
\end{conj}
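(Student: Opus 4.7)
My plan is to fix an unrooted $\Gamma$-labelled tree $\mathcal{T}'$ with $n$ vertices carrying charges $\alpha_1,\ldots,\alpha_n$ and to compare, directly on $\mathcal{T}'$, the contribution of each of its $n$ possible rootings to the leading jump of \eqref{perturbative} across $\ms$ with the contribution of each of its orientations to \eqref{joyceFormula}. Since the BPS data $\Omega(\alpha_i;u)$ and the edge factors $\langle \alpha_i,\alpha_j\rangle$ enter symmetrically under both the choice of root and the choice of orientation, the full conjecture follows once this tree-by-tree matching is established and then summed over all $\mathcal{T}'$.

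For the Gaiotto-Moore-Neitzke side, I would write each rooted contribution in \eqref{perturbative} as an iterated integral along the BPS rays in the $\zeta$-plane, with $n-1$ nested integrations ordered by the partial order defined by the chosen root and integrand a product of the Euler pairings $\langle \alpha_i,\alpha_j\rangle$ along the edges and of the semiflat factors $\Xsf_{\alpha_i}$ at the vertices. The jump across $\ms$ at leading exponential order $e^{-RC}$ is then extracted by an iterated residue at the confluence of BPS rays on $\ms$, producing an explicit rational function of the central charges $Z_{\mp}(\alpha_i)$ multiplied by the universal factor $\prod_{\{i,j\}\subset\mathcal{T}'}\langle \alpha_i,\alpha_j\rangle$ and by the signs $(-1)^{\langle \alpha_i,\alpha_j\rangle}$ produced by the commutators of the Kontsevich-Soibelman symplectomorphisms. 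Summing over the $n$ choices of root yields a single scalar $C^{\text{GMN}}_{\mathcal{T}'}$, depending only on the combinatorics of $\mathcal{T}'$ and on the configuration of $Z_{\mp}(\alpha_i)$.

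On the Joyce-Song side, the sum over orientations of $\mathcal{T}'$ in \eqref{joyceFormula} reproduces, after factoring out $\prod_{\{i,j\}\subset\mathcal{T}'}(-1)^{\langle \alpha_i,\alpha_j\rangle}\langle \alpha_i,\alpha_j\rangle$ and the prefactor $(-1)^{n-1}/2^{n-1}$, precisely the coefficient $\U(\alpha_1,\ldots,\alpha_n;\sigma_\mp)$ associated to $\mathcal{T}'$. The conjecture thus reduces to the combinatorial identity $C^{\text{GMN}}_{\mathcal{T}'}=\U(\alpha_1,\ldots,\alpha_n;\sigma_\mp)$. I would attempt to prove this by induction on $n$, pairing the recursive definition of $\U$ in \cite{js} with the corresponding recursion on the iterated residues obtained by removing a leaf from $\mathcal{T}'$.

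The hardest step will be the induction itself, since the recursion for $\U$ is controlled by slope (or central charge) comparisons between proper subsets of $\{\alpha_1,\ldots,\alpha_n\}$, while on the GMN side the analogous decomposition is dictated by the geometry of the nested BPS rays at $\sigma_\mp$; matching these two descriptions in full generality is nontrivial and I expect no closed form for either. A secondary, more technical obstacle is that the jumps of individual rooted contributions are in general singular as distributions on $\ms$, so the conjecture must be supplemented by a regularization prescription, which will be made precise at the end of Section~\ref{Nf2Sect}. Pending a general combinatorial argument, direct verification in low-$n$ cases and in the examples arising from physical theories, where quiver methods give independent access to both $\Omega$ and $\U$, should provide the evidence for the identity on which the conjecture rests.
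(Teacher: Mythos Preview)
The statement you are attempting to prove is a \emph{conjecture} in the paper, not a theorem: the paper does not contain a general proof. What the paper does instead is verify the conjecture by explicit computation in a number of specific examples drawn from $SU(2)$ Seiberg--Witten theory with $0\leq N_f\leq 3$ (the $W$ boson, several dyons, and analogues with flavour). In Section~\ref{combiSection} the paper reduces the $N_f=0$ case to a purely combinatorial statement about a graphical decay process, but then says explicitly that this combinatorial identity is one ``which nevertheless we do not know how to prove at the moment''. So there is no paper proof to compare against beyond case-by-case checks.

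Your proposal, by contrast, sketches a general inductive argument. The overall shape is reasonable, but there is a genuine gap in the reduction step. You claim that summing \eqref{joyceFormula} over orientations of $\mathcal{T}'$ factors out the edge product and the prefactor to leave ``precisely the coefficient $\U(\alpha_1,\ldots,\alpha_n;\sigma_\mp)$ associated to $\mathcal{T}'$''. But $\U$ is a function of an \emph{ordered sequence} of charges, not of a tree: in \eqref{joyceFormula} one sums independently over ordered decompositions (each carrying its own $\U$) and over labelled trees with compatible orientation. For a fixed $\Gamma$-labelled $\mathcal{T}'$ the Joyce--Song contribution is therefore a sum over orderings of its vertex set weighted by different $\U$ values, not a single $\U$; the paper's worked examples (e.g.\ the $\delta+2\gamma_m$ dyon) show this explicitly. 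Your proposed identity $C^{\text{GMN}}_{\mathcal{T}'}=\U(\alpha_1,\ldots,\alpha_n;\sigma_\mp)$ is thus not well-posed as stated, and the ``remove a leaf'' induction does not obviously match the actual recursion defining $\U$ (which proceeds via the $\S$-functions and contractions of ordered sequences, not via tree surgery). A correct reduction would have to keep track of the interplay between orderings and orientations, which is precisely where the difficulty lies and why the paper leaves the general statement open.
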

This approach explains why \eqref{joyceFormula} retains the structure of an asymptotic expansion (where the ``coupling constant" $R^{-1}$ has disappeared), and also offers an interpretation for the $\U$ functions in terms of certain integrals $\G_{\T}(\z)$. We mention some other points of interest of Conjecture \ref{mainConj}.
\begin{enumerate}
\item[$\bullet$] It seems striking that the Joyce-Song wall-crossing formula, which follows from the complicated theory of Ringel-Hall algebras, should emerge naturally from the GMN asymptotic expansion, which is obtained from a rather transparent superposition principle (the integral equation \eqref{tba} below) and a standard (at least for Physicists) asymptotic analysis.
\end{enumerate}
\begin{enumerate}
\item[$\bullet$] As a byproduct one would also obtain a new viewpoint on the equivalence of the wall-crossing formulae of Joyce-Song and Kontsevich-Soibelman \cite{ks}. Indeed the original motivation of GMN was to offer an interpretation for the latter formula. Quoting from \cite{pioline} ``[...] it should be noted that the Kontsevich-Soibelman wall-crossing
formula (and \emph{to a lesser extent}, the Joyce-Song formula) has already been derived or interpreted in various physical settings". Conjecture \ref{mainConj} would imply that in the GMN setting the interpretation of the two formulae is \emph{essentially the same}. 
\end{enumerate}
More precisely in \cite{gmn} the authors argue that the Kontsevich-Soibelman formula arises as a continuity condition for the holomorphic Darboux coordinates $\X_{\gamma}(\z)$, when one describes them as the solution of a suitable infinite-dimensional Riemann-Hilbert problem. The Riemann-Hilbert problem can be recast as an integral equation, \eqref{tba} below, which by standard arguments has the formal solution \eqref{perturbative}. So Conjecture \ref{mainConj} would lead to the following viewpoint: the Kontsevich-Soibelman formula follows simply from the existence of a continuous solution to the Riemann-Hilbert problem. If one actually tries to write down a solution using the asymptotic expansion \eqref{perturbative}, then the continuity condition becomes the Joyce-Song formula \eqref{joyceFormula}.  
\begin{enumerate}
\item[$\bullet$] Conjecture \ref{mainConj} could be a first step in addressing two additional important problems: comparing GMN theory with the works of Joyce \cite{joyHolo}, Bridgeland and Toledano-Laredo \cite{bt}; and using the recent motivic extension of GMN theory (see e.g. \cite{gmn3}) to describe a motivic extension of the Joyce-Song formula (or recover it, when available, as e.g. in the work of Chuang, Diaconescu and Pan \cite{cdp}).
\end{enumerate}
Notice that there are other conjectures in the literature which aim at comparing wall-crossing formulae obtained by physical arguments with those of Kontsevich-Soibelman and Joyce-Song, e.g. in the work of Manschot, Pioline and Sen \cite{mps}. Finally we should point out the papers of Chan \cite{chan} and Lu \cite{lu}, which also study \cite{gmn}, \cite{gmn2}, although with a completely different focus. While these works are concerned with the mirror-symmetric interpretation of GMN theory (in the local Ooguri-Vafa case for \cite{chan}, and much more ambitiously for moduli of singular $SU(2)$ Hitchin systems in \cite{lu}), we concentrate only on the asymptotic expansion \eqref{perturbative} and its connection with the formula \eqref{joyceFormula}.

The plan of the paper is the following: in Section \ref{general} we give a brief introduction to the basics of GMN theory (which we hope may be of independent interest), focusing on the class of examples which we will consider, namely the $SU(2)$ Seiberg-Witten gauge theories with $0 \leq N_f \leq 3$. Starting from Section \ref{instantonSection} we also present some computations involving the GMN connection and the GMN asymptotic expansion, which are implicit in \cite{gmn}, with the aim of explaining why Conjecture \ref{mainConj} could play an important role in comparing with \cite{joyHolo}, \cite{bt}. In Section \ref{basic} we explain Conjecture \ref{mainConj} in detail, checking it in many examples, and giving a purely combinatorial formulation at least for $N_f = 0$.

The reader who wants to get quickly to the computations with diagrams and integrals in Section 3 may want to look initially only at Sections \ref{bpsSect}, \ref{sfSect}, the first parts of \ref{instantonSection} and \ref{pertSection}, and \ref{wallcrossSect}.  

\noindent\textbf{Acknowledgements.} Many thanks to Tom Bridgeland, Tudor Dimofte, Heinrich Hartmann, Mart\'i Lahoz, Emanuele Macri, Sven Meinhardt, Ryo Ohkawa, Ivan Smith and Richard Thomas for useful discussions. The author is especially grateful to Daniel Huybrechts for his interest in this work, and to an anonymous Referee for pointing out a number of mistakes in an earlier version, as well as suggesting many improvements. This work was partially supported by the Hausdorff Center for Mathematics, Bonn and Trinity College, Cambridge. The research leading to these results has received funding from the European Research Council under the European Union's Seventh Framework Programme (FP7/2007-2013) / ERC Grant agreement no. 307119.
\section{Some general theory}\label{general}
\subsection{Connection to moduli of Higgs bundles} We concentrate for definiteness on the class of moduli spaces of singular $SU(2)$ Higgs bundles on $\PP^1$ considered e.g. in \cite{gmn2} Section 10 (see also \cite{tudor}). In the context of \cite{gmn} these correspond to the celebrated class of $SU(2)$ Seiberg-Witten gauge theories with $0 \leq N_f \leq 3$. Indeed while there is no doubt that the theory of \cite{gmn} applies much more generally, and that many interesting features and problems appear at higher genus, here we are only concerned in gathering enough motivation for our interpretation of \eqref{joyceFormula} as the footprint of an asymptotic expansion, and we believe that this is afforded already by this rather limited class of moduli spaces. In the rest of this paper $\M$ will always denote one of these $0 \leq N_f \leq 3$ moduli spaces (hopefully which one will be clear from the context).
\subsubsection{Standard Seiberg-Witten theory} The $N_f = 0$ case corresponds to moduli of pairs $(A, \varphi)$ of a $\mathfrak{su}(2)$ connection $A$ and Higgs fields $\varphi$ on $\PP^1$ which are singular at $z = 0, \infty$, with model singularity e.g. at $0$ given by (up to gauge transformations)
\begin{equation*}
\varphi \to -\frac{\Lambda}{|z|^{1/2}}
\left(\begin{matrix}
0            & 1 \\
e^{-i\theta} & 0
\end{matrix}\right)\frac{dz}{z},\,\,\, 
A \to \left(\begin{matrix}
-\frac{1}{8} & 0\\
0            & \frac{1}{8}
\end{matrix}\right)\left(\frac{dz}{z} - \frac{d\bar{z}}{\bar{z}}\right)
\end{equation*}
(here $\theta = \arg(z)$ and $\Lambda \in \C$ is a complex parameter, with $|\Lambda|$ corresponding to an energy scale). The Hitchin fibration $(A, \varphi) \mapsto \det\varphi$ maps $\mathcal{M}$ to an affine space of meromorphic quadratic differentials $\mathcal{B} \cong \C$, parametrized by 
\begin{equation*}
\lambda^2 = \left(\frac{\Lambda^2}{z^3} + \frac{2u}{z^2} + \frac{\Lambda^2}{z}\right) dz^2.
\end{equation*}
By the general theory the fibre of $\det$ over generic $u \in \B$ is a smooth elliptic curve, the Jacobian of the compactification $\overline{\Sigma}_u$ of 
\begin{equation*}
\Sigma_u := \{w^2 = \frac{\Lambda^2}{z^3} + \frac{2u}{z^2} + \frac{\Lambda^2}{z}\} \subset \C^2. 
\end{equation*}
So a smooth fibre $\M_u$ is (the Jacobian of) a double cover of $\PP^1$ ramified at $\{0, \infty, z^{\pm}_{tp}\}$, where
\begin{equation*}
z^{\pm}_{tp} = -\frac{u}{\Lambda} \pm \sqrt{\left(\frac{u}{\Lambda}\right)^2 - 1}.
\end{equation*}
(the ``turning points"). The turning points collide to $z^{\pm}_{tp} = \mp 1$ when $u = \pm \Lambda^2$. The corresponding fibres $\M_{\pm \Lambda^2}$ are nodal elliptic curves. There is a canonical meromorphic differential $\lambda = w dz$ on $\overline{\Sigma}_u$ for generic $u$, known as the Seiberg-Witten differential. The local system on $\mathcal{B}\setminus\{\pm \Lambda^2\}$ with stalks $H_1(\overline{\Sigma}_u, \Z)$ is denoted by $\Gamma$ and known as the charge lattice. $\Gamma$ is endowed with a nondegenerate skew-symmetric pairing $\bra \cdot, \cdot\ket$, coming from the intersection form on $H_1(\overline{\Sigma}_u, \Z)$. The crucial quantity for us is the central charge, an element of $\Gamma^* \otimes \C$ defined by integration of $\lambda$, 
\begin{equation*}
Z_{\gamma}(u) = \int_{\gamma}\lambda_u.
\end{equation*}
The wall of marginal stability $\ms \subset \B$ is the closure of the locus of $u \in \B$ for which $\{Z_{\gamma}(u) : \gamma \in \Gamma\} \subset \C$ is $1$-dimensional. It is a smooth real analytic curve inside $\B$, and in particular it contains the singular points $\pm \Lambda^2$ (because a generator of $H_1(\overline{\Sigma}_u, \Z)$ vanishes there). One can show that the curve $\ms \subset \B$ disconnects $\B$ into two components, a bounded one which we denote by $\B^s$, and an unbounded region denoted by $\B^w$ (physically, the bounded region corresponds to strong coupling, the unbounded one to weak coupling).
\subsubsection{$N_f = 1$} The $N_f = 1$ case corresponds to the meromorphic quadratic differentials
\begin{equation*}
\lambda^2 = \left(\frac{\Lambda^2}{z^3} + \frac{3u}{z^2} + \frac{2\Lambda m}{z} + \Lambda^2\right)dz^2
\end{equation*}
parametrized by $u \in \B \cong \C$ (for a fixed value of the complex parameter $m$). We will specialize to the most singular (and thus most interesting) situation when $m = 0$. Then the generic fibre $\M_u$ is a smooth elliptic curve, (the Jacobian of) a double cover of $\PP^1$ ramified at $z = 0$ and at the turning points, the three distinct roots of the cubic $\Lambda^2 z^3 + 3uz + \Lambda^2$ (notice that now $\infty$ is not a ramification point). There are three singular values of $u$, namely  $u^3 = \frac{1}{4}\Lambda^6$, for which two turning points collide; the singular fibres are again nodal elliptic curves. The definitions of the charge lattice and the central charge are unchanged. As above $\B$ splits into regions $\B^w, \B^s$ divided by the real analytic curve $\ms$.   
\subsubsection{Higher $N_f$} In the remaining cases the quadratic differentials are given by
\begin{equation*}
\lambda^2 = \left\{
\begin{matrix}
\left(\frac{\Lambda^2}{z^4} + \frac{2\Lambda m_1}{z^3} + \frac{4u}{z^2} + \frac{2\Lambda m_2}{z} + \Lambda^2\right)dz^2 \text{ for } N_f = 2,\\
\left(\frac{m^2_+}{z^2} + \frac{m^2_{-}}{(z-1)^2} + \frac{2\Lambda m + u}{2z} + \frac{2\Lambda m - u}{2(z-1)} + \Lambda^2\right)dz^2 \text{ for } N_f = 3.
\end{matrix}\right.
\end{equation*}
In the $N_f = 2$  case we will specialize $m$ to zero, giving the most singular case
\begin{equation*}
\lambda^2 = \left(\frac{\Lambda^2}{z^4} + \frac{4u}{z^2} + \Lambda^2\right)dz^2. 
\end{equation*}
The usual decomposition of $\mathcal{B}$ into $\mathcal{B}^s$ and $\mathcal{B}^w$ is still valid. This is also true for $N_f = 3$, for a suitable choice of the parameters $m, m_{\pm}$ (near $0$). 
\subsection{The Hitchin metric $g_R$} The smooth quasi-projective surface $\M$, as a moduli space of (singular) Higgs bundles, is endowed with a complete K\"ahler (in fact hyperk\"ahler) metric $g$ (a variation on the classical result of Hitchin \cite{hit}, see e.g. \cite{biqbo}). This is however not canonical, but comes naturally in a $1$-parameter family $g_R$ parametrized by $R > 0$. In other words we regard $\M$ as obtained by (infinite dimensional) hyperk\"ahler reduction of the hyperk\"ahler metric $\widetilde{g}_R$ on an affine space of pairs $(A, \varphi)$, namely 
\begin{equation*}
\widetilde{g}_R((\psi, \phi),(\psi, \phi)) = 2i \int\operatorname{Tr}(\psi^*\psi + R \phi\phi^*). 
\end{equation*}
where $\psi \in \Omega^{0, 1}(\mathfrak{sl}(2))$ is an infinitesimal gauge transformation, and $\phi \in \Omega^{1, 0}(\mathfrak{sl}(2))$ an infinitesimal Higgs field, so that the unitarity constraint in Hitchin's equations reads 
\begin{equation*}
F(A) + R[\varphi, \varphi^*] = 0.
\end{equation*} 
Notice that the above integral is well defined since the singularities of the connections and Higgs fields are fixed. The complex structure on $\M$ given by the moduli of Higgs bundles is independent of $R$, but the K\"ahler metric $g_R$ gives volume $R^{-1}$ to the smooth fibres of the Hitchin fibration. Also, while the $R$-dependence of $\widetilde{g}_R$ is straightforward, that of the hyperk\"ahler reduction $g_R$ is much more complicated and highly nonlinear. This $R$-dependence is the main object of study of \cite{gmn}, and $R^{-1}$ plays the role of the ``coupling constant" in the asymptotic expansion \eqref{perturbative} which we will use to interpret \eqref{joyceFormula}. One of the central conjectures of \cite{gmn} states that the $R$-dependence of $g_R$ is completely determined by a discrete invariant of $\mathcal{M}$, its BPS spectrum.
\subsection{BPS spectrum}\label{bpsSect} The BPS spectrum of $\M$ (counting BPS states) is a locally constant, $\Z$-valued function $\Omega(\gamma; u)$ on $\B \setminus\ms$ for $\gamma \in \Gamma$, with $\Omega(\gamma; u) = \Omega(-\gamma; u)$. The (countable) spectrum of BPS rays is defined by rays $\ell_{\gamma}(u) \subset \C$ spanned by the complex numbers $-Z_{\gamma}(u) \in \C$ where $\Omega(\gamma; u) \neq 0$. As we already mentioned in the introduction, a rigorous, a priori definition of BPS states and their counts starting from $\M$ is still lacking in general. In the special case of the $SU(2)$ Seiberg-Witten theories, the numbers $\Omega(\gamma; u)$ could be defined in terms of semistable representations of certain quivers naturally associated with $\M$. In all the examples we shall consider we will simply give a formula for $\Omega(\gamma; u)$ (indeed, for our purposes, we could give a working definition saying that a BPS spectrum for $\M$ is just a function $\Omega(\gamma; u)$ for which Conjecture \ref{gmnConj} below holds). But we should at least briefly mention the heuristic geometric interpretation of the numbers $\Omega(\gamma; u)$ emerging from \cite{gmn2}. Let us denote by $\lambda(u)$ the canonical (Seiberg-Witten) meromorphic 1-form on $\overline{\Sigma}_u$. We can also think of $\lambda$ as a 2-valued meromorphic differential on $\PP^1$. The rough idea is that $\Omega(\gamma; u)$ enumerates paths $\alpha\!: [0, 1] \to \PP^1$ which are solutions to 
\begin{equation*} 
\bra \lambda, \dot\alpha \ket \in e^{i\theta}\R^*
\end{equation*}
(for some angle $\theta \in S^1$), representing the homology class $\gamma$, and which are either closed or stretch between ramification points (the finite WKB curves of \cite{gmn2}).  
\subsubsection{Standard Seiberg-Witten} For $N_f = 0$, at strong coupling, it is possible to interpret all the BPS states in terms of two suitable paths (WKB curves) $\delta, \gamma_m$ joining the two turning points $z^{\pm}_{tp}$, such that $\delta - \gamma_m$ is an oriented $S^1$ around $z = 0$. Since they stretch between ramification points, one can regard these paths as closed paths in $\Sigma_u$, producing homology classes in $H_1(\overline{\Sigma}_u, \Z)$, still denoted by $\delta, \gamma_m$, with $\bra \delta, \gamma_m \ket = 2$ (reflecting that the paths $\delta, \gamma_m$ share both endpoints). In fact $\delta, \gamma_m$ are the vanishing cycles for the fibration $\{\overline{\Sigma}_u, u \in \C\}$. One can show that the only suitable WKB curves for $u \in \B^s$ are $\delta, \gamma_m$ (with a choice of orientation), so the BPS spectrum consists of just $\pm \delta, \pm \gamma_m$. The full BPS spectrum of $\M$ is given by
\begin{equation*}
\begin{matrix}
\Omega(\d; u) = \Omega(\m; u) = 1 \text{ for } u \in \B^s,\\
\Omega(k \d + (k + 1)\m) = \Omega((k+1)\d + k\m; u) = 1, \Omega(\d + \m; u) = -2 \text{ for } u \in \B^w,
\end{matrix}
\end{equation*}
for $k \geq 0$, plus the same indices for the negative of these charges. All the other indices vanish. The result at weak coupling can also be understood in terms of finite WKB curves, but is more complicated. In particular an infinite family of closed WKB curves appears, and one should make sense of counting these curves in a suitable way. (The approach taken in \cite{gmn2} is to enumerate them indirectly through their action of the Fock-Goncharov coordinates on $\mathcal{M}$, but we will not explain this further here). Alternatively one can just compute with one of the available wall-crossing formulae.
\subsubsection{$N_f = 1$} When $N_f > 0$ we encounter a new feature which we had kept silent up to now. Namely the spectrum $\Omega(\gamma; u)$ is not really a function on the homology local system $\Gamma$, but rather on an extension $\hat{\Gamma}$, with
\begin{equation*}
0 \to \Gamma_f \to \hat{\Gamma} \to \Gamma \to 0 
\end{equation*}
where $\Gamma_f$ is a rank $N_f$ local system. The fibre of the local system $\hat{\Gamma}$ is the sublattice of $H_1(\Sigma_u, \Z)$ (the open curve) spanned by vanishing cycles of the fibration $\{\Sigma_u, u \in \C\}$. The standard terminology is that $\hat{\Gamma}$ is really the charge lattice, while $\Gamma$ and $\Gamma_f$ are called respectively the gauge and flavour charge lattices. With this terminology in place we can write down the BPS spectrum. Let us denote by $\gamma_{1, 2, 3}$ the vanishing cycles in $H_1(\Sigma_u, \Z)$ (they can be realized on $\PP^1$ as the class of oriented segments joining two consecutive turning points around $z = 0$). Then we have 
\begin{equation*}
\begin{matrix}
\Omega(\gamma_1; u) = \Omega(\gamma_2; u) = \Omega(-\gamma_3; u) = 1 \text{ for } u \in \B^s,\\
\Omega((k+1)\gamma_2 + k \gamma_1 - k\gamma_3; u) = \Omega((k+1)\gamma_2 + k \gamma_1 - (k+1)\gamma_3; u) = 1\\
\Omega((k+1)\gamma_1 + k \gamma_2 - k\gamma_3; u) = \Omega((k+1)\gamma_1 + k \gamma_2 - (k+1)\gamma_3; u) = 1\\
\Omega(-\gamma_3; u) = \Omega(\gamma_1 + \gamma_2; u) = 1, \Omega(\gamma_1 - \gamma_3 + \gamma_2; u) = -2 \text{ for } u \in \B^w,
\end{matrix}
\end{equation*} 
for $k \geq 0$, plus the same indices for the negative of these charges. All the other indices vanish. Notice that in this case still writing $\gamma_{i},\, i = 1, 2, 3$ for the images in $\Gamma$ we have the single relation
\begin{equation*}
\gamma_1 + \gamma_2 + \gamma_3 = 0,
\end{equation*} 
and the intersection products are given by 
\begin{equation*}
\bra \gamma_1, \gamma_2\ket = \bra \gamma_2, \gamma_3\ket = \bra \gamma_3, \gamma_1\ket = 1. 
\end{equation*}
\subsubsection{$N_f = 2$} In this case $0, \infty$ are not ramification points, there are four turning points, and the charge lattice $\hat{\Gamma}$ is spanned by four vanishing cycles $\gamma^1_{1}, \gamma^1_{2}, \gamma^2_{1}, \gamma^2_2$ (which can be realized geometrically on $\PP^1$ as suitable paths joining two turning points). For the images in $\Gamma$ one has $\gamma^1_1 = \gamma^2_1, \gamma^1_2 = \gamma^2_2$, and $\bra \gamma^i_1, \gamma^j_2\ket = 1, \bra \gamma^i_1, \gamma^j_1\ket = \bra \gamma^i_2, \gamma^j_2\ket = 0$. The BPS spectrum is given by
\begin{equation*}
\begin{matrix}
\Omega(\gamma^1_1; u) = \Omega(\gamma^2_1; u) = \Omega(\gamma^1_2; u) = \Omega(\gamma^2_2; u) = 1 \text{ for } u \in \B^s,\\
\Omega(a^1_1\gamma^1_1 + a^2_1\gamma^2_1 + a^1_2 \gamma^1_2 + a^2_2 \gamma^2_2; u) = 1 (\text{for }a^1_1 + a^2_1 = k, a^1_2 + a^2_2 = k + 1, |a^1_i - a^2_i| \leq 1)\\
\Omega(a^1_1\gamma^1_1 + a^2_1\gamma^2_1 + a^1_2 \gamma^1_2 + a^2_2 \gamma^2_2; u) = 1 (\text{for }a^1_1 + a^2_1 = k+1, a^1_2 + a^2_2 = k, |a^1_i - a^2_i| \leq 1)\\
\Omega(\gamma^i_1 + \gamma^j_2; u) = 1, \Omega(\gamma^1_1 + \gamma^1_2 + \gamma^2_1 + \gamma^2_2; u) = -2 \text{ for } u \in \B^w,
\end{matrix}
\end{equation*} 
where $k \geq 0$, $a^i_j \geq 0$, plus the same indices for the negative of these charges. All the other indices vanish.
\subsubsection{$N_f = 3$} Again choosing $m, m_{\pm}$ suitably, $0, \infty$ are not ramification points, there are four turning points on $\PP^1\setminus\{0, \infty\}$, and five singular fibres for $\det$. There are four vanishing cycles $\gamma^{1,2,3,4}_1 \in \hat{\Gamma}$ with the same image in $\Gamma$, plus a vanishing cycle $\gamma_2$ with $\bra \gamma^i_1, \gamma_2\ket = 1$. As usual we have
\begin{equation*}
\Omega(\gamma^i_1; u) = \Omega(\gamma_2; u) = 1 \text{ for } u \in \B^s.
\end{equation*}
We do not write down the full BPS spectrum as strong coupling, but just notice that for $i \neq j$,
\begin{equation*}
\Omega(\gamma_2 + \gamma^i_1 + \gamma^j_1; u) = 1, \Omega(\gamma_2 + \sum_i \gamma^i_1; u) = -2 \text{ for } u \in \B^w.
\end{equation*}
\subsection{Hyperk\"ahler structure on $\M$} We now recall the conjectural description of the metric $g_R$ in terms on the BPS spectrum. For this we need to know a bit more about the hyperk\"ahler structure on $\M$. We will denote by $J_3$ the complex structure on $\mathcal{M}$ as a moduli space of Higgs bundles. There are two other (equivalent) complex structures $J_1, J_2$ that we can put on $\mathcal{M}$, induced by the actions on infinitesimal gauge transformations and Higgs fields given by 
\begin{equation*} 
\widetilde{J}_1(\psi, \phi) = (i\phi^*, -i\psi^*),\,\,\,\widetilde{J}_2(\psi, \phi) = (-\phi^*, \psi^*).
\end{equation*}
These satisfy the hyperk\"ahler condition
\begin{equation*}
J_1 J_2 = J_3,\,\,\,J_2 J_3 = J_1,\,\,\,J_3 J_1 = J_2.
\end{equation*}
We can form a whole $\PP^1$ of complex structures on $\mathcal{M}$ (known as the twistor sphere) parametrized by a coordinate $\z$,
\begin{equation*}
J(\z) = \frac{i(-\z + \bar{\z})J_1 - (\z + \bar{\z})J_2 + (1-|\z|^2)J_3}{1 + |\z|^2}.
\end{equation*}
For $\z \neq 0, \infty$, the $J(\z)$ are all equivalent to $J_1$, while for $\z = 0, \infty$ we recover $J_3$ (of course we need to rescale by $\z$ or $\z^{-1}$ to make sense of this). In fact for $\z \neq 0$ the map 
\begin{equation*}
(A, \varphi) \mapsto \mathscr{A}:=\frac{R}{\z}\varphi + A + R\z\varphi^* 
\end{equation*}
induces a biholomorphisms of $(\M, J(\z))$ with the moduli space of irreducible, meromorphic flat $\PP SL(2,\C)$ connections with prescribed singularities (this is a variation on the classical result of Donaldson \cite{don}, see e.g. \cite{biqbo}). Let us denote by $\om_i$ the symplectic forms obtained from $g_R$ and $J_i$ (more generally, we will write $\om(\z)$ for the symplectic form obtained combining $g_R$ and $J(\z)$). We will also write $\om_{\pm} = \om_1 \pm i \om_2$. Then one can show that 
\begin{equation*}
\varpi(\z) = -\frac{i}{2\z}\om_{+} + \om_3 -\frac{i}{2}\z \om_{-} 
\end{equation*} 
is a holomorphic symplectic form in complex structure $J(\z)$. For $\z = 0, \infty$ this is induced by the form
\begin{equation*}
\int\operatorname{Tr}(\phi_2\psi_1 - \phi_1\psi_2),
\end{equation*}
while for all other $\z$ it is induced by
\begin{equation*}
\int\operatorname{Tr}(\delta\mathscr{A}\wedge\delta\mathscr{A}).
\end{equation*}
From the form $\varpi(\z)$ we can reconstruct the metric $g_R$ uniquely. One of the key results of \cite{gmn} is a conjectural constuction of $\varpi(\z)$ in terms of the BPS spectrum, as an asymptotic expansion starting from a specific semiflat metric $g^{\sf}_R$ (flat on the fibres), with correction terms of order less than $e^{-R}$ as $R \to +\infty$. 

To keep the exposition and notation light, in the rest of this section we discuss this construction in the special case when $\M$ is the $N_f = 0$ moduli space. Two simplifications occur in this case:
\begin{enumerate}
\item[$\bullet$] the local systems $\hat{\Gamma}$ and $\Gamma$ coincide (i.e. $\Gamma_f$ is trivial);
\item[$\bullet$] the symplectic form $\bra-,-\ket$, restricted to the lattice spanned by $\delta$ and $\gamma_m$, is even.
\end{enumerate}
The first property leads to a mostly notational simplification; for the details of how to keep track of $\hat{\Gamma}$ see the Introduction to \cite{gmn2}. The second property allows one to get rid of all the sign issues in the definition of the holomorphic Darboux coordinates (related to the ``quadratic refinements" of \cite{gmn}). In section \ref{basic} these sign issues will become relevant, and we will show how to mend the definitions in this section to fit the $N_f > 0$ cases.
\subsection{The semiflat metric}\label{sfSect} In the following we will often assume we have fixed a local splitting of $\Gamma$ as $\Gamma^m\oplus\Gamma^e$, corresponding to a choice of symplectic basis for $H_1(\overline{\Sigma}_u, \Z)$, so that $\Gamma^e$, $\Gamma^m$ are spanned locally by $\gamma_e, \m$ with $\bra \gamma_e, \m \ket = 1$. In particular on a fixed fibre $\M_u = J(\overline{\Sigma}_u)$ we have dual angular coordinates $\th = (\th_e, \th_m)$, and we will write a point $m \in \M$ as $(u, \th)$, where $u \in \B$. More generally, we will write $\th_{\gamma}$ for the angular coordinate dual to $\gamma \in \Gamma$. 

A set of (exponential) local \emph{holomorphic Darboux coordinates} for the hyperk\"ahler metric on $\M$ is given by locally defined functions $\X_{\gamma}(m; \z)$ for $\gamma \in \Gamma$, $m \in \M$ and $\z \in \C^*$ such that
\begin{enumerate}
\item[$\bullet$] the function $\X_{\gamma}(m; \z)$ is holomorphic in the variable $\z$, at least in a nonempty dense open subset,\\
\item[$\bullet$] for $\gamma_1, \gamma_2 \in \Gamma$ we have $\X_{\gamma_1}\X_{\gamma_2} = \X_{\gamma_1 + \gamma_2}$,\\
\item[$\bullet$] $\X_{\gamma}(\z) = \overline{\X_{-\gamma}(-\bar{\z}^{-1})}$,
\end{enumerate}
and (what looks more like the Darboux property)
\begin{equation*}
\varpi(\z) = -\frac{1}{8\pi^2 R}\frac{d\X_e}{\X_e}\wedge\frac{d\X_m}{\X_m},
\end{equation*}
where $d$ denotes the differential on $\M$ (freezing the variable $\z$). The construction of \cite{gmn} produces (conjecturally) a set of distinguished exponential holomorphic Darboux coordinates for $g$, defined in terms of $\{\Om(\gamma; u)\}$. The starting point is a hyperk\"ahler metric $g^{\sf}$ on $\M\setminus\{\M_{\pm\Lambda^2}\}$, K\"ahler with respect to all the complex structures $J(\z)$, and semiflat, i.e. flat on the fibres of the Seiberg-Witten fibration (which is holomorphic in complex structure $J_3$). The metric $g^{\sf}$ is defined \emph{a priori} in terms of the putative holomorphic Darboux coordinates
\begin{equation}\label{sfCoords}
\Xsf_{\gamma}(u, \th; \z) := \exp\left(\pi R \z^{-1} Z_{\gamma}(u) + i\th_{\gamma} + \pi R\z \bar{Z}_{\gamma}(u)\right).
\end{equation}
The putative holomorphic symplectic form is of course
\begin{equation*}
\om^{\sf}(\z) = -\frac{1}{8\pi^2 R}\frac{d\Xsf_e}{\Xsf_e}\wedge\frac{d\Xsf_m}{\Xsf_m}.
\end{equation*}
One can check that this effectively defines a hyperk\"ahler structure on $\M\setminus\{\M_{\pm\Lambda^2}\}$ (with respect to the twistor sphere $J(\z)$). The holomorphic symplectic form is given by
\begin{equation*} 
\varpi^{\sf}(\z) = \frac{1}{4\pi}\left[\frac{i}{\z}\bra dZ, d\th\ket + \left(\pi R\bra dZ, d\bar{Z}\ket - \frac{1}{2\pi R}\bra d\th, d\th \ket\right) + i\z\bra d\bar{Z}, d\th\ket\right], 
\end{equation*}
where $\bra-,-\ket$ denotes the combination of the wedge product on forms with the symplectic form on $\Gamma^*\otimes\C$. The prospective K\"ahler form $\om_3$ is given by the $\z$-invariant part. In the standard notation in special geometry, one writes 
\begin{equation*}
a := Z_e(u),\,\,\,a_D := Z_m(u), \text{ and } \tau := \frac{\del a_D}{\del a},
\end{equation*}
from which
\begin{align*}
\om_3 &= \frac{i R}{2}\Im(\tau)\,da\wed d\bar{a} - \frac{1}{8\pi^2 R}d\th_e\wed d\th_m\\
&= \frac{i}{2}\left(R\Im(\tau)\,da\wed d\bar{a} + \frac{1}{4\pi^2 R}\Im(\tau)^{-1}dz\wed d\bar{z}\right), 
\end{align*}
where $dz = d\th_m - \tau d\th_e$ is only closed on the fibres. This is the customary expression for a semiflat metric in special geometry.\\  
\textbf{Example.} There is a local counterpart to this global semiflat metric, in the neighborhood of a singular fibre, by setting
\begin{align}
a &= Z_e(u) = u, \label{OVcentral1}\\
a_D &= Z_m(a) = \frac{1}{2\pi i}\left(a \log\frac{a}{\Lambda} - a\right)\label{OVcentral2},
\end{align}
and so 
\begin{equation*}
\tau = \frac{1}{2\pi i}\log\frac{a}{\Lambda}.
\end{equation*} 
\subsection{Instanton corrections}\label{instantonSection} We describe a model case of the main result and conjecture of \cite{gmn}, for the moduli space $\M$ we are considering. The authors propose a physical argument to the effect that there exist holomorphic Darboux coordinates $\X_{\gamma}(\z)$ for $g_R$, obtained as the unique solution to the integral equation 
\begin{equation}\label{tba}
\X_{\gamma}(\z) = \Xsf_{\gamma}(\z)\exp\left[-\frac{1}{4\pi i}\sum_{\gamma' \in \Gamma} \Omega(\gamma'; u)\bra\gamma, \gamma'\ket\int_{\ell_{\gamma'}}\frac{d\z'}{\z'}\frac{\z'+\z}{\z'-\z}\log(1 - \X_{\gamma'}(\z'))\right].
\end{equation}
More precisely, they prove the following result:
\begin{thm}[GMN] For $R$ large enough, iteration starting from the semiflat coordinates $\Xsf_{\gamma}(\z)$ converges to a solution $\X_{\gamma}(\z)$ of \eqref{tba}. The functions $\X_{\gamma}(\z)$ obtained is this way form a set of holomorphic Darboux coordinates for a hyperk\"ahler metric on $\M$. 
\end{thm}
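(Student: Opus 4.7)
The plan is to establish the theorem via a standard contraction mapping argument for the integral equation \eqref{tba}, combined with a twistor-theoretic identification of the resulting symplectic form. The key observation driving the convergence is that on the BPS ray $\ell_{\gamma'}$ one has $\z' = -t\, Z_{\gamma'}(u)/|Z_{\gamma'}(u)|$ for $t > 0$, so that
\begin{equation*}
\pi R (\z')^{-1} Z_{\gamma'}(u) + \pi R \z' \bar{Z}_{\gamma'}(u) = -\pi R |Z_{\gamma'}(u)|\left(t + t^{-1}\right) \leq -2\pi R |Z_{\gamma'}(u)|,
\end{equation*}
whence $\Xsf_{\gamma'}(\z')$ decays like $e^{-2\pi R|Z_{\gamma'}(u)|}$ along the integration contour, uniformly over compact subsets of $\B \setminus \ms$. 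Since $|\log(1-x)| \leq 2|x|$ for $|x|\leq 1/2$, the first correction to the semiflat coordinates has order $e^{-2\pi R \min|Z_{\gamma'}(u)|}$ where the minimum is taken over $\gamma'$ with $\Omega(\gamma';u) \neq 0$, and is thus exponentially small for large $R$.

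First I would fix a compact set $K \subset \B \setminus \ms$ and an open domain $V \subset \C^*$ bounded away from all the BPS rays and from $\z = 0, \infty$, and consider the space $\mathscr{F}$ of families $\{f_\gamma\}_{\gamma \in \Gamma}$ of continuous functions on $K \times V \times (S^1)^2$ (the last factor being the torus of fibre angles $\th$), subject to the multiplicative relation $f_{\gamma_1}f_{\gamma_2} = f_{\gamma_1+\gamma_2}$ and the reality condition, equipped with the weighted sup norm $\|f - \Xsf\| := \sup_{\gamma, u, \z, \th} e^{\pi R |Z_\gamma(u)|}|f_\gamma - \Xsf_\gamma|$. Writing $T$ for the operator defined by the right-hand side of \eqref{tba}, one checks by direct estimation that $T$ maps a small ball around $\Xsf$ into itself and is Lipschitz with constant of order $e^{-\pi R C}$ for $R$ large enough, where $C$ is a positive lower bound on $|Z_{\gamma'}(u)|$ over $u \in K$ and $\gamma'$ with $\Omega(\gamma';u) \neq 0$. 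The subtle points in this estimate are boundedness of the kernel $(\z'+\z)/(\z'-\z)$, uniform in $V$ and over the rays $\ell_{\gamma'}$, and convergence of the sum over $\Gamma$, which requires polynomial growth of $\Omega(\gamma';u)$ and linear growth of $|Z_{\gamma'}(u)|$ as $|\gamma'| \to \infty$ so that the weight $e^{-\pi R |Z_{\gamma'}|}$ dominates.

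Having obtained a fixed point $\X_\gamma(\z)$, the next step is to verify the structural properties listed before the statement. Both the multiplicative and reality constraints pass to the limit under $T$: the former by linearity of the exponent in $\gamma$, the latter using $\Omega(\gamma;u) = \Omega(-\gamma;u)$ together with the $\z \mapsto -\bar{\z}^{-1}$ symmetry of the integral kernel. Holomorphicity of $\X_\gamma(\z)$ on the complement of the BPS rays is immediate by differentiation under the integral sign, and the prescribed jumps of $\X_\gamma$ across each ray $\ell_{\gamma'}$ follow from a Plemelj--Sokhotski computation, reproducing precisely the Kontsevich-Soibelman symplectomorphism generated by $\gamma'$.

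The final and geometrically substantial point is to show that $\varpi(\z) := -\frac{1}{8\pi^2 R}\, d\log\X_e \wedge d\log\X_m$ is the holomorphic symplectic form of a genuine hyperk\"ahler metric on $\M$. This would proceed via Hitchin's twistor construction: one verifies that $\varpi(\z)$, viewed as a $\z$-dependent fibrewise $2$-form on $\M \times \PP^1_\z \to \PP^1_\z$, is a section of $\mathcal{O}_{\PP^1}(2)$ twisted by holomorphic forms, of maximal rank, obeying the reality condition $\overline{\varpi(-\bar{\z}^{-1})} = \varpi(\z)$ and admitting the right poles at $\z = 0, \infty$, while being asymptotic to $\varpi^{\sf}(\z)$ as $R \to \infty$ (so that the associated metric is a small perturbation of the positive-definite semiflat one). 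The main obstacle, in my view, lies not in the contraction estimate itself, which is essentially bookkeeping once the norm is chosen, but in making the twistor reconstruction rigorous in this noncompact setting with irregular singularities at $\z = 0, \infty$: one must check that, after stripping off the semiflat singularity, the remaining twistor data is regular enough to invoke Hitchin's theorem and actually corresponds to the Hitchin metric on the specific moduli space of singular $SU(2)$ Higgs bundles under consideration. It is here that the finiteness of the strong-coupling BPS spectrum and the detailed form of the quadratic differentials $\lambda^2(u)$ enter decisively.
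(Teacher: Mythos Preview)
The paper does not contain a proof of this theorem: it is stated as a result of Gaiotto, Moore and Neitzke and attributed to \cite{gmn}, with no argument given in the present text. There is therefore nothing in the paper to compare your proposal against.

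That said, your outline is broadly the right shape for the GMN argument and is consistent with how the result is obtained in \cite{gmn}: the exponential decay of $\Xsf_{\gamma'}$ along $\ell_{\gamma'}$ drives a contraction for large $R$, the structural properties (multiplicativity, reality, piecewise holomorphicity with Kontsevich--Soibelman jumps) are inherited from the integral equation, and the hyperk\"ahler structure is recovered from the $\zeta$-dependence of $\varpi(\zeta)$ via the twistor correspondence. Your caveat about the noncompact twistor reconstruction is well placed; note also that your proposed norm and function space need some care, since the fixed point must be obtained as a function on all of $\C^*$ with prescribed discontinuities across the BPS rays, not merely on a domain $V$ bounded away from them, and the sum over $\Gamma$ requires more than polynomial growth control on $\Omega(\gamma';u)$ in general (in the Seiberg--Witten examples the strong-coupling spectrum is finite, which sidesteps this). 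But since the present paper offers no proof, these are remarks on your sketch relative to \cite{gmn} rather than a comparison with anything in the text.
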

The main conjecture is then:
\begin{conj}[GMN]\label{gmnConj} The equation \eqref{tba} admits a unique solution $\X_{\gamma}(\z, R)$, defined for all $R > 0$, such that the functions $\X_{\gamma}(\z, R)$ form a set of holomorphic Darboux coordinates for the Hitchin metric $g_R$.  
\end{conj}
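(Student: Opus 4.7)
The plan is to split the conjecture into two logically independent parts. Part (a): the integral equation \eqref{tba} admits a unique solution $\X_{\gamma}(\z, R)$ for every $R > 0$. Part (b): the hyperk\"ahler structure assembled from this solution coincides with the Hitchin metric $g_R$. The preceding Theorem handles both parts in the regime $R \gg 0$, so the task is to extend to all $R > 0$.

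For part (a), I would set up \eqref{tba} as a fixed-point problem on a Banach space of functions with growth controlled by the semiflat ansatz $\Xsf_{\gamma}(\z)$, away from the BPS rays. The Picard iteration from the large-$R$ theorem contracts only in the exponentially suppressed regime, so at finite $R$ one needs a substitute. A natural approach is to establish a priori bounds ruling out blow-up as $R$ decreases, using the hidden positivity in the integrand: $\log(1 - \X_{\gamma'}(\z'))$ integrated along $\ell_{\gamma'}$ has the structure of a K\"ahler potential for the prospective form $\om_3$, and positivity of $\om_3$ should prevent collapse. Across the wall of marginal stability $\ms \subset \B$ the data $\Om(\gamma; u)$ jump, and the continuity of $\X_\gamma$ there reduces to the Kontsevich--Soibelman wall-crossing formula, which is built in by construction.

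For part (b), the cleanest route is twistor-theoretic rigidity. A hyperk\"ahler structure is determined by its twistor space, and via the map $(A, \varphi) \mapsto R\z^{-1}\varphi + A + R\z\varphi^*$ the fibres $(\M, J(\z))$ for $\z \neq 0, \infty$ are identified with a moduli of meromorphic flat $\PP SL(2,\C)$ connections. The coordinates $\X_\gamma(\z)$ should be identified with Fock--Goncharov type coordinates on this character variety, which are canonically defined from the WKB expansion of the monodromy of $\mathscr{A}$. The $\z \to 0, \infty$ asymptotics of $\X_\gamma$ (dictated by \eqref{sfCoords}) together with the prescribed jumps across BPS rays (dictated by \eqref{tba}) pin down these coordinates uniquely; once identified, Darbouxness with respect to $\varpi(\z)$ fixes the hyperk\"ahler metric.

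The main obstacle is part (a). At finite $R$ there is no contraction, and the statement that the BPS spectrum $\{\Om(\gamma;u)\}$ \emph{alone} generates the instanton expansion of $g_R$ — i.e.\ that no extra corrections appear as $R$ decreases — is precisely the content of the conjecture. A plausible reversal of the logic is to \emph{define} $\X_\gamma(\z, R)$ as the Fock--Goncharov coordinates of the flat connection $\mathscr{A}$ for all $R > 0$, check that they satisfy \eqref{tba} using the Riemann--Hilbert interpretation of \cite{gmn}, and so deliver both parts simultaneously; this shifts the difficulty to a careful analysis of the WKB asymptotics at finite $R$, where the Stokes phenomenon must reproduce exactly the BPS jumps encoded by $\Om(\gamma; u)$.
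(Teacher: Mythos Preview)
The statement you are attempting to prove is labelled in the paper as a \emph{Conjecture} (attributed to Gaiotto--Moore--Neitzke), not a theorem. The paper offers no proof of it whatsoever; it is stated as an open problem, and the preceding Theorem establishes only the large-$R$ regime. There is therefore nothing in the paper to compare your proposal against.

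Your proposal is not a proof but a strategic outline, and you yourself identify the gap: for part (a) you note that at finite $R$ there is no contraction and that the absence of further corrections ``is precisely the content of the conjecture''. The suggested workaround---defining $\X_\gamma$ via Fock--Goncharov coordinates and verifying \eqref{tba} a posteriori---is the approach pursued in \cite{gmn2}, but carrying it out rigorously (in particular, controlling the WKB/Stokes analysis uniformly in $R$ and matching the jumps to the BPS spectrum) remains, to the author's knowledge, open. So your write-up is a reasonable survey of the landscape around the conjecture, but it does not close it, and should not be presented as a proof.
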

\begin{rmk} The principal object of study in Joyce-Song theory are the Donaldson-Thomas invariants $\dt(\gamma; \sigma)$, while the BPS state counts $\Omega(\gamma'; \sigma)$ are only defined indirectly through the multi-cover formula
\begin{equation}\label{multiCover}
\dt(\gamma; \sigma) = \sum_{n >0, n | \gamma}\frac{\Omega(\gamma/n; \sigma)}{n^2}.
\end{equation}
In \cite{gmn} one has precisely the opposite situation: the basic quantity is the BPS spectrum $\{\Omega(\gamma; u)\}$, and the formal analogues of the Donaldson-Thomas invariants arise very naturally in the analysis of the integral equation \eqref{tba}, namely by considering a power series expansion
\begin{equation*}
-\sum_{\gamma' \in \Gamma} (\Omega(\gamma'; u)\log(1 - \X_{\gamma'}(\z'))\gamma' = \sum_{\gamma' \in \Gamma} f^{\gamma'}\X_{\gamma'}
\end{equation*} 
for certain coefficients $f^{\gamma'} \in \Gamma$. The unique solution is given by
\begin{equation}\label{fCoeff}
f^{\gamma} = \sum_{n > 0, \gamma = n\gamma'} \frac{\Omega(\gamma'; u)}{n}\gamma',
\end{equation}
and then the integral equation \eqref{tba} takes the more amenable form 
\begin{equation*}
\X_{\gamma}(\z) = \Xsf_{\gamma}(\z)\exp\bra \gamma, \frac{1}{4\pi i}\sum_{\gamma' \in \Gamma}  f^{\gamma'}\int_{\ell_{\gamma'}}\frac{d\z'}{\z'}\frac{\z'+\z}{\z'-\z} \X_{\gamma'}(\z')\ket.
\end{equation*}
Notice that if we define formally a set of numbers $\dt(\gamma; u)$ via \eqref{multiCover} we have in fact
\begin{equation*}
f^{\gamma} = \dt(\gamma)\gamma.
\end{equation*}
\end{rmk}
\begin{exm} In the $N_f = 0$ strong coupling region we have $\d = 2 \gamma_e - \m$, and the integral equations for $\X_{\gamma_e}, \X_{\m}$ are 
\begin{align*}
\X_{m}(\z) &= \Xsf_{m}(\z)\exp\left[-\frac{1}{4\pi i}(\mp 2)\int_{\ell_{\pm(2\gamma_e - \m)}}\frac{d\z'}{\z'}\frac{\z'+\z}{\z'-\z}\log(1 - \X^{\pm 2}_{e}(\z')\X^{\mp 1}_{m}(\z'))\right],\\
\X_{e}(\z) &= \Xsf_{e}(z)\exp\left[-\frac{1}{4\pi i}(\mp 1)\int_{\ell_{\pm(2\gamma_e - \m)}}\frac{d\z'}{\z'}\frac{\z'+\z}{\z'-\z}\log(1 - \X^{\pm 2}_{e}(\z')\X^{\mp 1}_{m}(\z'))\right.\\
&\left.-\frac{1}{4\pi i}(\pm 1)\int_{\ell_{\pm(\m)}}\frac{d\z'}{\z'}\frac{\z'+\z}{\z'-\z}\log(1 - \X^{\pm 1}_{m}(\z'))\right]\\
&= \Xsf_{e}(z)\frac{\X^{1/2}_{m}(\z)}{\Xsf_{m}(\z)}\exp\left[-\frac{1}{4\pi i}(\pm 1)\int_{\ell_{\pm(\m)}}\frac{d\z'}{\z'}\frac{\z'+\z}{\z'-\z}\log(1 - \X^{\pm 1}_{m}(\z'))\right],
\end{align*}
and so equivalent to a \emph{single} integral equation for $\X_{m}(\z)$.  
\end{exm}
\textbf{Example.} Again, the above global statements have a local counterpart around a singular fibre, which indeed provides important motivation for the ansatz \eqref{tba}. We consider the theory over a disc $\Delta \subset \C$ with radius $|\Lambda|$. The charge lattice is $\Gamma \cong \Z^2$ spanned by $\gamma_e, \gamma_m$, with $\bra \gamma_e, \gamma_m\ket = 1$. We pick the central charge given by the expressions \eqref{OVcentral1}, \eqref{OVcentral2}, \emph{but with \eqref{OVcentral2} rescaled by $q^2$ for some $q \geq 1$}, and declare a single BPS state with electric charge $q \in \mathbb{N}_{> 0}$, that is $\Omega(q\gamma_{e}; u) = \Omega(-q\gamma_{e}; u) = 1$ for all $u \in \Delta$. We set all the other BPS invariants to zero. So we are led to the equations
\begin{align*}
\X_{e} &= \Xsf_e,\\
\nonumber \X_{m} = \Xsf_m \exp &\left[\frac{i q}{4\pi}\int_{\ell_{\gamma_e}}\frac{d\z'}{\z'}\frac{\z'+\z}{\z'-\z}\log(1 - \X_e (\z')^q)\right.\\
&\left. -\frac{i q}{4\pi}\int_{\ell_{-\gamma_e}}\frac{d\z'}{\z'}\frac{\z'+\z}{\z'-\z}\log(1 - \X_e (\z')^{-q})\right].
\end{align*}
In a key computation in \cite{gmn} Section 4.3, the authors prove that these are holomorphic Darboux coordinates for the hyperk\"ahler metric first described by Ooguri and Vafa in \cite{ogvafa} (see also \cite{pelham} Section 3 for a detailed mathematical exposition), by comparing with the explicit form of this metric in Gibbons-Hawking ansatz. In the approach of \cite{gmn} these equations provide the basic clue for the integral equation \eqref{tba}: this should be seen as the natural many-particles generalization of the single-particle, Ooguri-Vafa case. We will not repeat their computations here, but it is instructive to perform a slightly different calculation. Notice that Hitchin (see e.g. \cite{hit2}) spelled out precisely what conditions a set of holomorphic Darboux coordinates must satisfy to give rise to the holomorphic symplectic form of a hyperk\"ahler metric. The crucial point is an integrability condition, requiring that the horizontal derivatives of $\X_{\gamma}(\z)$ must equal the action of suitable vertical complex vector fields. We wish to apply Hitchin's theorem directly to the integral equation. For $\X_e$ we just find
\begin{align*}
\del_a \X_e &= -\frac{1}{\z}i\pi R\del_{\theta_e}\X_e\\
\del_{\bar{a}}\X_e &= - \z i\pi R\del_{\theta_e}\X_e,
\end{align*}
while
\begin{align*}
\del_a \X_m &= \frac{1}{\z} \pi R \tau \X_m + \X_m \left[\frac{i q}{4\pi}\int_{\ell_{\gamma_e}}\frac{d\z'}{\z'}\frac{\z'+\z}{\z'-\z}\del_a\log(1 - \X_e (\z')^q)\right.\\
&\left. -\frac{i q}{4\pi}\int_{\ell_{-\gamma_e}}\frac{d\z'}{\z'}\frac{\z'+\z}{\z'-\z}\del_a\log(1 - \X_e (\z')^{-q})\right]\\
&= \frac{1}{\z} \pi R \tau \X_m + \X_m \left[-\frac{i q^2 R}{4}\int_{\ell_{\gamma_e}}\frac{d\z'}{\z'}\frac{1}{\z'}\frac{\z'+\z}{\z'-\z}\frac{\X_e(\z')^q}{1 - \X_e (\z')^q}\right.\\
&\left. +\frac{i q^2 R}{4}\int_{\ell_{-\gamma_e}}\frac{d\z'}{\z'}\frac{1}{\z'}\frac{\z'+\z}{\z'-\z}\frac{\X_e(\z')^{-q}}{1-\X_e(\z')^{-q}}\right].
\end{align*}
Similarly
\[\del_{\th_m} \X_m = i \X_m,\]
\begin{align*}
-\frac{1}{\z}i\pi R\del_{\th_e} \X_m &= \X_m \left[-\frac{i q^2 R}{4}\int_{\ell_{\gamma_e}}\frac{d\z'}{\z'}\frac{1}{\z}\frac{\z'+\z}{\z'-\z}\frac{\X_e(\z')^q}{1 - \X_e (\z')^q}\right.\\
&\left. + \frac{i q^2 R}{4}\int_{\ell_{-\gamma_e}}\frac{d\z'}{\z'}\frac{\z'+\z}{\z'-\z}\frac{\X_e(\z')^{-q}}{1 - \X_e (\z')^{-q}}\right]\\
&= \X_m \left[-\frac{i q^2 R}{4}\int_{\ell_{\gamma_e}}\frac{d\z'}{\z'}\left(\frac{1}{\z} + \frac{1}{\z'} + \frac{1}{\z'}\frac{\z' + \z}{\z' - \z}\right)\frac{\X_e(\z')^q}{1 - \X_e (\z')^q}\right.\\
&\left. +\frac{i q^2 R}{4}\int_{\ell_{-\gamma_e}}\frac{d\z'}{\z'}\left(\frac{1}{\z} + \frac{1}{\z'} + \frac{1}{\z'}\frac{\z' + \z}{\z' - \z}\right)\frac{\X_e(\z')^{-q}}{1 - \X_e (\z')^{-q}}\right].
\end{align*}
So we find the identity
\begin{equation}\label{CR}
(\del_a + \frac{1}{\z}i\pi R \tau\del_{\theta_m})\X_m =  -\frac{1}{\z}i\pi R\del_{\th_e} \X_m - \left(\frac{1}{\z} v + w \right)\del_{\theta_m}\X_m,
\end{equation}
where $v, w$ are functions defined by
\begin{align*}
v &= -\frac{q^2 R}{4}\int_{\ell_{\gamma_e}}\frac{d\z'}{\z'}\frac{\X_e(\z')^q}{1 - \X_e (\z')^q} + \frac{q^2 R}{4}\int_{\ell_{-\gamma_e}}\frac{d\z'}{\z'}\frac{\X_e(\z')^{-q}}{1 - \X_e (\z')^{-q}},\\ 
w &= -\frac{q^2 R}{4}\int_{\ell_{\gamma_e}}\frac{d\z'}{(\z')^2}\frac{\X_e(\z')^q}{1 - \X_e (\z')^q} + \frac{q^2 R}{4}\int_{\ell_{-\gamma_e}}\frac{d\z'}{(\z')^2}\frac{\X_e(\z')^{-q}}{1 - \X_e (\z')^{-q}}.
\end{align*}
Of course we may rewrite \eqref{CR} as  
\begin{equation*}
\del_a\X_m = \A_a\X_m
\end{equation*}
where the vertical vector field $\A_a$ is given by
\begin{equation}\label{aConnection}
\A_a = \frac{1}{\z}[-i\pi R\del_{\th_e} - (v + i\pi R \tau)\del_{\theta_m}] - w\del_{\th_m}.
\end{equation}
In the notation of \cite{gmn} Section 4, where the principal quantities are the potential $V$ and connection form $A$, we have
\begin{equation*}
v = -\pi V^{\inst},\,\,\,v + i\pi R \tau = -\pi(V + 2\pi i R A_{\th_e}),\,\,\,w = -2\pi A_a,
\end{equation*}
and so 
\begin{equation*}
\A_a = \frac{1}{\z}\left[-i\pi R\del_{\theta_e} + \pi(V + 2\pi i R A_{\theta_e})\del_{\theta_m}\right] + 2\pi A_a \del_{\theta_m}.
\end{equation*}
Similar computations also give the other integrability equation,
\begin{equation*}
\del_{\bar{a}}\X_m = \A_{\bar{a}}\X_m
\end{equation*}
where
\begin{align*}
\A_{\bar{a}} &= - \z\left[i\pi R\del_{\theta_e} - \pi(v - i\pi R \bar{\tau})\del_{\theta_m}\right] - \bar{w}\th_{m}\\
&= 2\pi A_{\bar{a}}\del_{\theta_m} - \z\left[i\pi R\del_{\theta_e} + \pi(V - 2\pi i R A_{\theta_e})\del_{\theta_m}\right].
\end{align*}
\subsection{The GMN connection}  An additional important feature of \cite{gmn} is that the holomorphic Darboux coordinates should be regarded naturally as flat sections of a flat connection on the twistor sphere $\PP^1$. In other words for a fixed choice of parameter $\Lambda, u, R$ we consider the differential equation
\begin{equation*}
\z\del_{\z} \X_{\gamma} = \A_{\z}\X_{\gamma}
\end{equation*}
where $\A_{\z}$ is the complex vertical vector field (on the fibre $\M_{u}$), given in coordinates by
\begin{equation}\label{zetaConnection}
\A_{\z} = \sum_{i,j}\del_{\z}\X_{\gamma_i} [(\del_{\th}\X)^{-1}]_{i j} \del_{\th^j}
\end{equation} 
(where we fix a basis of local sections $\{\gamma_i\}$). There are two claims about $\A_{\z}$:
\begin{enumerate}
\item[$\bullet$] it should decompose as 
\begin{equation*}
\A_{\z} = \frac{1}{\z}\A^{(-1)}_{\z} + \A^{(0)}_{\z} + \z\A^{(1)}_{\z}
\end{equation*}
where the $\A^{(i)}_{\z}$ \emph{do not} depend on $\z$;
\item[$\bullet$] it should satisfy the equation (called the scale invariance/R-symmetry equation)
\begin{equation}\label{scale}
\A_{\z} \X= (- a\del_a + \bar{a}\del_{\bar{a}} - \Lambda\del_{\Lambda} + \bar{\Lambda}\del_{\bar{\Lambda}})\X.
\end{equation}
\end{enumerate}
\textbf{Example.} Again we work this out for the Ooguri-Vafa metric. In fact we show by direct computation that the $\X$ satisfy differential equations of the form
\begin{align*}
\z\del_{\z}\X &= \A_{\z}\X,\\
R\del_R\X &= \A_R\X
\end{align*}
for suitable vertical complex vector fields $\A_{\z}, \A_{R}$ with a very simple $\z$ dependence. An alternative (more enlightning) derivation is given in \cite{gmn} Section 4.5 by exploiting two symmetries of the system.  
\begin{lem}\label{scaleLemma} The following equations hold
\begin{align*}
\z\del_{\z}\X &= \left(-\Lambda\del_{\Lambda} + \bar{\Lambda}\del_{\bar{\Lambda}}-a\del_a + \bar{a}\del_{\bar{a}}\right)\X\\
R\del_R \X &= \left(\Lambda\del_{\Lambda} + \bar{\Lambda}\del_{\bar{\Lambda}} + a\del_a + \bar{a}\del_{\bar{a}}\right)\X.
\end{align*}
\end{lem}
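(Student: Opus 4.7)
My plan is to realise each of the two identities as the infinitesimal version of a one-parameter symmetry group acting on the Ooguri-Vafa integral equation of the previous example. I would introduce the actions $T_\phi\colon (a, \Lambda, \z) \mapsto (e^{-i\phi}a, e^{-i\phi}\Lambda, e^{-i\phi}\z)$ (with $\bar a, \bar \Lambda$ acted on by complex conjugation, and $R, \th$ fixed) and $S_t\colon (a, \Lambda, R) \mapsto (e^{-t}a, e^{-t}\Lambda, e^{t}R)$ (fixing $\z$ and $\th$). A direct computation shows that the infinitesimal generators $\del_\phi|_{\phi = 0}$ and $\del_t|_{t = 0}$ are, up to an overall sign, precisely the differential operators appearing in the two displayed identities (after moving the $\z\del_\z$ and $R\del_R$ terms to the left-hand side). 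So it is enough to prove invariance of $\X_e$ and $\X_m$ under $T_\phi$ and $S_t$ and then to differentiate at $\phi = 0$ and $t = 0$.

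For $\X_e = \Xsf_e$ the invariance under both groups is immediate from the weight-one homogeneity $Z_e = a \mapsto e^{-i\phi}a$ and $Z_e \mapsto e^{-t}Z_e$, which is reabsorbed by the corresponding rescaling of $\z$ (respectively $R$) in the exponent defining $\Xsf_e$. For $\X_m$ I would work with the defining integral equation. First I would check that the semiflat prefactor $\Xsf_m$ is invariant under both groups by the same mechanism, using the homogeneity $Z_m = \frac{q^2}{2\pi i}(a\log(a/\Lambda) - a) \mapsto e^{-i\phi}Z_m$ and $Z_m \mapsto e^{-t}Z_m$ (the crucial point is that the logarithm depends only on the ratio $a/\Lambda$). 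Then I would check invariance of each of the two integral terms: under $T_\phi$ the change of variable $\z' \mapsto e^{-i\phi}\z''$ rotates the new BPS ray back to $\ell_{\pm\gamma_e} = \mp\R_{\geq 0}\cdot a$, leaves $d\z'/\z'$ and the Cauchy kernel $(\z' + \z)/(\z' - \z)$ unchanged, and, combined with the homogeneity of $Z_e$, takes $\X_e(\z')$ to itself; under $S_t$ the BPS rays are pointwise fixed since $e^{-t}>0$, and the joint scaling of $(a, \Lambda, R)$ again leaves $\Xsf_e(\z')$ invariant.

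Once invariance of the whole right-hand side is established, the existence and uniqueness statement of the GMN theorem recalled above closes the argument: $\X_m$ and $\X_m \circ T_\phi$ solve the same integral equation with the same contours, so they coincide, and likewise for $S_t$. Differentiating at $\phi = 0$ and $t = 0$, and using that $\X$ is holomorphic in $\z$, would then yield the two identities. The main obstacle I anticipate is the careful bookkeeping in the contour manipulation: I would need to check that the rotated integration contour coincides with the BPS ray prescribed by the rotated central charge, and that the branch of $\log(a/\Lambda)$ used in $Z_m$ is stable under the rotation of $(a, \Lambda)$ for $\phi$ in a neighbourhood of $0$. On the simply connected disc $\Delta$ of parameters no monodromy is encountered, so this is routine here; but it is precisely the step which would need more care in any global extension of the argument beyond the Ooguri-Vafa neighbourhood.
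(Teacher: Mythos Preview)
Your approach is correct and is genuinely different from the paper's own proof. The paper proves the first identity by a direct computation: it applies $(-a\del_a + \bar a\del_{\bar a})$ to the explicit integral formula for $\X_m$, observes that on the integrand this operator acts as $\z'\del_{\z'}$, and then integrates by parts in $\z'$ to convert this into the $\z\del_\z$ derivative of the integral; the semiflat prefactor is handled by a short computation with $a\tau - Z_m$, which produces exactly the $\Lambda\del_\Lambda$ and $\bar\Lambda\del_{\bar\Lambda}$ terms. The $R\del_R$ equation is stated to follow by a similar but easier direct computation (no integration by parts needed). Your argument instead packages both identities as infinitesimal invariance under the one-parameter groups $T_\phi$ and $S_t$; this is precisely the symmetry derivation the paper alludes to just before the lemma when it writes ``An alternative (more enlightening) derivation is given in [GMN1] Section 4.5 by exploiting two symmetries of the system.''

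Two small comments on your write-up. First, in the Ooguri-Vafa case the right-hand side of the integral equation for $\X_m$ involves only $\X_e = \Xsf_e$, not $\X_m$ itself, so once you have checked invariance of each ingredient you already have $\X_m\circ T_\phi = \X_m$ directly; invoking existence and uniqueness of solutions is unnecessary here (though it would be the right move in the genuinely coupled case). Second, the functions $\X_\gamma$ are only piecewise holomorphic in $\z$, with jumps across the BPS rays $\ell_{\pm\gamma_e}$, so the identities should be understood away from those rays; your symmetry argument respects this automatically since $T_\phi$ rotates the rays together with $\z$.
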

\begin{proof} Consider the $\z \del_{\z}$ equation first. It is straightforward to check it for $\X_e$ (since there are no corrections), and we can compute directly
\begin{align*}
(-a\del_a + \bar{a}\del_{\bar{a}})\X_m &= \left(-\frac{1}{\z} \pi R a\tau + \z \pi R \bar{a}\bar{\tau}\right)\X_m\\
& + \X_m \left[\frac{i q}{4\pi}\int_{\ell_{\gamma_e}}\frac{d\z'}{\z'}\frac{\z'+\z}{\z'-\z}(-a\del_a +\bar{a}\del_{\bar{a}})\log(1 - \X_e (\z')^q)\right.\\
&\left. -\frac{i q}{4\pi}\int_{\ell_{-\gamma_e}}\frac{d\z'}{\z'}\frac{\z'+\z}{\z'-\z}(-a\del_a +\bar{a}\del_{\bar{a}})\log(1 - \X_e (\z')^{-q})\right].
\end{align*}
Now
\[\int_{\ell_{\gamma_e}}\frac{d\z'}{\z'}\frac{\z'+\z}{\z'-\z}(-a\del_a +\bar{a}\del_{\bar{a}})\log(1 - \X_e (\z')^q)\]
\begin{align*}
&= \int_{\ell_{\gamma_e}}\frac{d\z'}{\z'}\frac{\z'+\z}{\z'-\z}\z'\del_{\z'}\log(1 - \X_e (\z')^q)\\
&= -\int_{\ell_{\gamma_e}}d\z'\del_{\z'}\left(\frac{\z'+\z}{\z'-\z}\right)\log(1 - \X_e (\z')^q)\\
&= \z\int_{\ell_{\gamma_e}}\frac{d\z'}{\z'}\frac{2\z'}{(\z'-\z)^2}\log(1 - \X_e (\z')^q)\\
&= \z \del_{\z}\int_{\ell_{\gamma_e}}\frac{d\z'}{\z'}\frac{\z'+\z}{\z'-\z}\log(1 - \X_e (\z')^q), 
\end{align*}
integrating by parts. An identical computation holds for $\int_{\ell_{-\gamma_e}}$. So
\begin{align*}(\z d\z + a\del_a - \bar{a}\del_{\bar{a}})\X_m &= \left(\frac{1}{\z} \pi R (a\tau-Z_m) -\z \pi R (\bar{a}\bar{\tau}-\bar{Z}_m)\right)\X_m\\
&= \left(-\frac{i R q^2 a}{2\z} - \z\frac{iR q^2  \bar{a}}{2}\right)\X_m. 
\end{align*}
(Recall that in the presence of a charge $q > 1$ both $\tau$ and $Z_m$ are rescaled by $q^2$). We conclude thanks to 
\[\Lambda \del_{\Lambda}\X_m = \frac{i R q^2 a}{2\z}\X_m,\,\,\,\bar{\Lambda} \del_{\bar{\Lambda}}\X_m = - \z\frac{iR q^2  \bar{a}}{2}\X_m.\]
The derivation of the $R\del_R$ equation is very similar but simpler (without integration by parts). 
\end{proof}
Let us define further vertical vector fields
\[\A_{\Lambda} = \frac{q^2 R a}{2\z}\del_{\theta_m},\,\,\,\A_{\bar{\Lambda}} = \frac{\z q^2 R \bar{a}}{2}\del_{\theta_m}.\]
\begin{cor} The following equations hold
\begin{align*}
\z\del_{\z}\X &= \A_{\z}\X\\
R \del_R \X &= \A_{R}\X,
\end{align*}
where
\begin{align*}
\A_{\z} &= -a \A_a + \bar{a}\A_{\bar{a}} - \Lambda\A_{\Lambda} + \bar{\Lambda}\A_{\bar{\Lambda}},\\
\A_R &= a\A_a + \bar{a}\A_{\bar{a}} + \Lambda\A_{\Lambda} + \bar{\Lambda}\A_{\bar{\Lambda}}.
\end{align*}
Furthermore there are decompositions
\begin{align*}
\A_{\z} &= \frac{1}{\z}\A^{(-1)}_{\z} + \A^{(0)}_{\z} + \z\A^{(1)}_{\z}\\
\A_R &= \frac{1}{\z}\A^{(-1)}_R + \A^{(0)}_R + \z\A^{(1)}_R
\end{align*}
for vector fields $\A^{i}_{\z}, \A^{j}_{R}$ independent of $\z$, where
\[\A^{(-1)}_R = -\A^{(-1)}_{\z},\,\,\,\A^{(1)}_R = \A^{(1)}_{\z}.\]
\end{cor}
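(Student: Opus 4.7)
The plan is to combine the two identities proven in Lemma \ref{scaleLemma} with the integrability relations that express horizontal derivatives of $\X$ as the action of vertical vector fields. Recall that \eqref{aConnection} (and its conjugate counterpart) gives $\del_a \X = \A_a \X$ and $\del_{\bar a}\X = \A_{\bar a}\X$. So it remains to identify the terms $\Lambda\del_\Lambda$ and $\bar\Lambda\del_{\bar\Lambda}$ in Lemma \ref{scaleLemma} with $\A_\Lambda$ and $\A_{\bar\Lambda}$ respectively. For $\X_e$ there are no corrections and both sides vanish, since $\A_\Lambda, \A_{\bar\Lambda}$ are proportional to $\del_{\theta_m}$, which annihilates $\X_e$. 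For $\X_m$, the very last computation in the proof of Lemma \ref{scaleLemma} gives $\Lambda\del_\Lambda\X_m = (i q^2 R a/(2\z))\X_m$; together with $\del_{\theta_m}\X_m = i\X_m$, this matches exactly the action of $\A_\Lambda = (q^2 R a/(2\z))\del_{\theta_m}$, and analogously for $\bar\Lambda$. Substituting these four identifications into the identities of Lemma \ref{scaleLemma} yields the first two displayed equations of the Corollary.

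For the Laurent decomposition in $\z$, I would simply read off the $\z$-dependence of each constituent vector field from its definition. From \eqref{aConnection}, $\A_a$ has a $\z^{-1}$ part and a $\z^0$ part; by conjugation $\A_{\bar a}$ has a $\z^0$ part and a $\z^1$ part; by inspection $\A_\Lambda$ is purely of order $\z^{-1}$, while $\A_{\bar\Lambda}$ is purely of order $\z$. The crucial point to verify is that the scalar ingredients of these vector fields — namely $a, \bar a, \Lambda, \bar\Lambda, \tau, \bar\tau$ together with the integrals $v, w$ defined in Section \ref{instantonSection} — are all independent of $\z$ (for $v, w$ this is simply because the integration variable is a different parameter $\z'$). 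Once this is granted, the coefficient of each power of $\z$ in the sum $\A_\z = -a\A_a + \bar a\A_{\bar a} - \Lambda\A_\Lambda + \bar\Lambda\A_{\bar\Lambda}$ is itself $\z$-independent, giving the claimed trichotomy $\A_\z = \z^{-1}\A^{(-1)}_\z + \A^{(0)}_\z + \z\A^{(1)}_\z$, and similarly for $\A_R$.

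The two sign identities $\A^{(-1)}_R = -\A^{(-1)}_\z$ and $\A^{(1)}_R = \A^{(1)}_\z$ then follow by a direct comparison of like terms: the $\z^{-1}$ coefficient of $\A_\z$ collects contributions only from $-a\A_a$ and $-\Lambda\A_\Lambda$, whereas the same coefficient of $\A_R$ collects them from $a\A_a$ and $\Lambda\A_\Lambda$ — the signs are flipped, giving the first identity. Dually, the $\z^1$ coefficient of each comes only from $\bar a\A_{\bar a}$ and $\bar\Lambda\A_{\bar\Lambda}$, which appear with the same sign in both combinations, giving the second identity. There is no serious obstacle here: the content is almost entirely a matter of carefully tracking the explicit $\z$-dependence from the definitions and matching coefficients, and the only thing that could conceivably go wrong — a dependence of $v, w$ on $\z$ — does not happen because those are constants of the fibre integrals in $\z'$.
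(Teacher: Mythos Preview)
Your proposal is correct and follows essentially the same approach as the paper, which simply says the result is ``a straightforward computation using the Lemma above and the integrability conditions for $\X$.'' You have spelled out precisely those steps: invoking Lemma~\ref{scaleLemma}, replacing horizontal derivatives by the vertical fields $\A_a,\A_{\bar a},\A_\Lambda,\A_{\bar\Lambda}$ via the integrability relations, and then reading off the Laurent decomposition and the sign identities directly from the explicit $\z$-dependence of each piece.
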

\begin{proof} This is a straightforward computation using the Lemma above and the integrability conditions for $\X$.
\end{proof}
\subsection{The asymptotic expansion}\label{pertSection} In \cite{gmn} Appendix C, the authors apply methods in the analysis of integral equations (especially arguments from \cite{vafa}) to the GMN equation \eqref{tba} in order to find an asymptotic expansion for the solution. Here we only explain briefly the final result. Let $\mathcal{T}$ denote a finite \emph{rooted} tree, with $n$ vertices decorated by elements $\gamma_i \in \Gamma$. Assume for a moment that the vertices are labelled by integers which increase with the distance from the root. The decoration at the root is denoted by $\gamma_{\mathcal{T}}$. \cite{gmn} (C.15) define a weight
\begin{equation}\label{Wweight}
\mathcal{W}_{\mathcal{T}} = (-1)^{n}\frac{f^{\gamma_{\mathcal{T}}}}{|\Aut(\mathcal{T})|}\prod_{(i, j)\in\operatorname{Edges}(\mathcal{T})}\bra \gamma_i, f^{\gamma_j}\ket.
\end{equation}
(There is an extra sign with respect to (C.15) due to a different convention for Kontsevich-Soibelman operators). This weight is clearly intrinsic to $\T$ (it does not depend on the labelling by integers, which we can now forget). Let us denote by $\mathcal{T} \to \{\mathcal{T}_a\}$ the operation of removing the root to produce a finite set of decorated rooted trees. We define the kernel
\begin{equation*}
\rho(\sigma, \tau) = \frac{1}{\tau}\frac{\tau + \sigma}{\tau - \sigma}.
\end{equation*}
Notice that for $f(\sigma)$ holomorphic in a neighborhood of a fixed $\tau_0$ we have 
\begin{equation*}
\operatorname{Res}_{\tau_0} \rho(\sigma, \tau_0) f(\sigma) = 2 f(\tau_0).
\end{equation*} 
\cite{gmn} (C.27) introduce piecewise holomorphic functions (``propagators") $\mathcal{G}_{\mathcal{T}}(\zeta)$, defined recursively by $\mathcal{G}_{\emptyset} = 1$ and
\begin{equation}{\label{Gfunct}}
\mathcal{G}_{\mathcal{T}}(\z) = \frac{1}{4\pi i} \int_{\ell_{\gamma_{\mathcal{T}}}} d\zeta'\rho(\z, \z')\mathcal{X}^{\sf}_{\mathcal{\gamma_{\mathcal{T}}}}(\zeta')\prod_{a} \mathcal{G}_{\mathcal{T}_a}(\zeta').
\end{equation}
Then \cite{gmn} (C.26) claim that a \emph{formal} solution to \eqref{tba} (with the required boundary conditions, which we omit to explain) is given by
\begin{equation}\label{perturbative}
\X_{\gamma}(\z) = \Xsf_{\gamma}(\z) \exp \bra \gamma, \sum_{\mathcal{T}} \mathcal{W}_{\mathcal{T}} \mathcal{G}_{\mathcal{T}}(\z)\ket.
\end{equation}
\begin{rmk} In general the convergence of this asymptotic expansion seems to be an important open problem. From the point of view of Conjecture \ref{mainConj} it is probably related to similar convergence issues in the work of Joyce \cite{joyHolo}.
\end{rmk}
\begin{exm} We work out the asymptotic expansion for the $q$-Ooguri-Vafa. Let us look at the contribution of an edge $(i, j) \subset \mathcal{T}$. This is weighted by $\dt(\gamma_j)$, so $\gamma_j = k q \gamma_e$. But then $\mathcal{W}_{\mathcal{T}}$ vanishes if $\mathcal{T}$ contains a nontrivial edge. We see that only first order instanton corrections survive, parametrised by decorations of a single root $\{\bullet\}$, and we find 
\begin{equation*}
\mathcal{G}_{k q \gamma_e \{\bullet\}} = \frac{1}{4\pi i}\int_{\ell_{\operatorname{sgn}(k)\gamma_e}} d\zeta'\rho(\z, \z')\Xsf_{\gamma_e}(\z')^{k q}
\end{equation*}
(using $\Xsf_{\gamma_1 + \gamma_2} = \Xsf_{\gamma_1} \chi^{\sf}_{\gamma_2}$). By \eqref{perturbative} we have
\begin{align*}
\X_e(\z) &= \Xsf_e(\z),\\
\X_m(\z) &= \Xsf_m(\z)\exp\bra \gamma_m, \sum_{k \in \Z\setminus\{0\}}\mathcal{W}_{k q \gamma_e\{\bullet\}} \mathcal{G}_{k q \gamma_e \{\bullet\}}(\z)\ket\\
&= \Xsf_m(\z)\exp\left[-\frac{q}{4\pi i} \sum_{k \neq 0} \int_{\ell_{\operatorname{sgn}(k)\gamma_e}} d\zeta'\rho(\z, \z')\frac{\Xsf_{\gamma_e}(\z')^{k q}}{k}\right]\\
&= \Xsf_m(\z)\exp\left[\frac{i q}{4\pi}\int_{\ell_{\gamma_e}} d\zeta'\rho(\z, \z')\log(1 - \Xsf_{\gamma_e}(\z')^q)\right.\\
&\left. -\frac{i q}{4\pi}\int_{\ell_{-\gamma_e}} d\zeta'\rho(\z, \z')\log(1 - \Xsf_{\gamma_e}(\z')^{-q})\right].   
\end{align*}
This is the same as the integral equation for the Ooguri-Vafa metric derived in \cite{gmn} (4.33).
\end{exm}
\textbf{Example.} Going back to pure $SU(2)$ Seiberg-Witten, let us analyze the simplest higher order correction to $\log\X_m(\z)$ at strong coupling beyond first order instanton corrections. This is encoded by the graph
\begin{center}
\centerline{
\xymatrix{ \d \ar[r] & \m}}
\end{center}
which corresponds to the integral
\begin{equation*} \frac{2\d}{4\pi i}\int_{\ell_{\d}} d\z_1\rho(\z, \z_1)\Xsf_{\d}(\z_1)\frac{1}{4\pi i}\int_{\ell_{\m}}d\z_2\rho(\z_1, \z_2)\Xsf_{\m}(\z_2).
\end{equation*}
Setting
\begin{equation*}
\z_1 = -\frac{\sqrt{Z_{\d}}}{\sqrt{\bar{Z}_{\d}}} e^{s},\,\,\,\z_2 = -\frac{\sqrt{Z_m}}{\sqrt{\bar{Z}_m}} e^{t},
\end{equation*}
we can estimate this integral by
\begin{align*}
\nonumber C \delta \int^{+\infty}_{-\infty} ds \left|\frac{\z_1 + \z}{\z_1 - \z}\right|&\exp(-2\pi R|Z_{\d}|\cosh(s))\\ &\cdot\int^{+\infty}_{-\infty}dt\left|\frac{\z_2 + \z_1}{\z_2 - \z_1}\right|\exp(-2\pi R|Z_m|\cosh(t)).
\end{align*}
Let $\vartheta$ denote the angle between the BPS rays $\ell_{\d}, \ell_{\m}$. We can estimate the inner integral by
\begin{equation*}
\left(\frac{1 + \cos(\vartheta)}{1 - \cos(\vartheta)}\right)^{1/2} K_0(2\pi R |Z_m|), 
\end{equation*} 
where we used a standard integral representation of the modified Bessel functions of the second kind, 
\begin{equation*}
K_{\alpha}(x) = C_{\alpha}\int^{+\infty}_{-\infty}ds\,e^{-\alpha s} e^{-x\cosh(s)}
\end{equation*}
(for $C_{\alpha}$ a constant which depends only on $\alpha$). Now for $u$ bounded away from $\ms$, the angle $\vartheta$ is bounded away from $0$, and combining this with standard results about the asymptotics of Bessel functions we can estimate the total integral by
\begin{equation*}
C' \delta \frac{1}{\pi R |Z_m|}\exp(-\pi R |Z_m|) \int^{+\infty}_{-\infty} ds \left|\frac{\z_1 + \z}{\z_1 - \z}\right| \exp(-2\pi R|Z_{\d}|\cosh(s)) 
\end{equation*}
for $R$ large enough. Repeating the argument, for fixed $\z$ bounded away from $\ell_{\d}$ we find a uniform bound 
\begin{equation*}
C''\delta \frac{1}{\pi R |Z_{\d}|}\exp(-\pi R |Z_{\d}|)\frac{1}{\pi R |Z_m|}\exp(-\pi R |Z_m|) 
\end{equation*}
as $R\to +\infty$. It is not hard to generalize this example to the following result.
\begin{lem} The contribution of a fixed (rooted, labelled) tree $\T$ with $n$ vertices to the asymptotic expansion \eqref{perturbative} can be estimated by
$C \exp(-C' n R)$ for all $R > C''$, where the constants $C, C', C''$ only depend on the distance of $u$ from the curve $\ms$. 
\end{lem}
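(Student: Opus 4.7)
The plan is to induct on the number of vertices $n$, generalising the $n=1$ and $n=2$ cases computed above. Throughout, fix $\z$ in a compact subset of $\C^*$ bounded away from all BPS rays, and assume $u$ is bounded away from $\ms$.

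For the base case $n=1$, the tree consists of a single root decorated by some $\gamma \in \Gamma$ with $\Omega(\gamma;u)\neq 0$, and
\begin{equation*}
\G_{\T}(\z) = \frac{1}{4\pi i}\int_{\ell_{\gamma}} d\z'\,\rho(\z,\z')\Xsf_{\gamma}(\z').
\end{equation*}
Parametrising $\ell_{\gamma}$ by $\z'(s) = -\sqrt{Z_{\gamma}/\bar{Z}_{\gamma}}\,e^{s}$ gives $|\Xsf_{\gamma}(\z'(s))| = \exp(-2\pi R|Z_{\gamma}|\cosh s)$, while $|\rho(\z,\z'(s))|$ is uniformly bounded because $\z$ is bounded away from $\ell_{\gamma}$. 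Combining these with the standard integral representation and large-argument asymptotics of $K_0$ yields $|\G_{\T}(\z)| \leq C\exp(-C' R)$ for $R > C''$, exactly as in the $n=2$ example.

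For the inductive step, remove the root of $\T$ to obtain decorated rooted subtrees $\T_1,\dots,\T_k$ with $\sum_a n_a = n-1$, so that
\begin{equation*}
\G_{\T}(\z) = \frac{1}{4\pi i}\int_{\ell_{\gamma_{\T}}} d\z'\,\rho(\z,\z')\Xsf_{\gamma_{\T}}(\z')\prod_a \G_{\T_a}(\z').
\end{equation*}
By the inductive hypothesis applied with $\z'$ in place of $\z$, each $|\G_{\T_a}(\z')|$ is bounded by $C_a\exp(-C' n_a R)$ uniformly for $\z'$ on $\ell_{\gamma_{\T}}$, provided $\ell_{\gamma_{\T}}$ is uniformly transverse to every $\ell_{\gamma_{\T_a}}$. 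Estimating the remaining single-particle integral against $\Xsf_{\gamma_{\T}}(\z')$ as in the base case gives
\begin{equation*}
|\G_{\T}(\z)| \leq C\exp(-C' R)\cdot\prod_a C_a\exp(-C' n_a R) \leq C''\exp(-C' n R).
\end{equation*}
The weight $\W_{\T}$ from \eqref{Wweight} depends only on $\T$ and the BPS spectrum and is absorbed into the overall constant.

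The main obstacle is propagating the inductive bound uniformly in $\z'$ as it varies over $\ell_{\gamma_{\T}}$. The kernel $\rho(\z_1,\z_2)$ in the recursive formula \eqref{Gfunct} develops a singularity as $\z_1\to\z_2$, so the required uniform control demands that $\ell_{\gamma_{\T}}$ be transverse to every $\ell_{\gamma_{\T_a}}$ with a uniform lower bound on the angle. This is precisely what the hypothesis ``$u$ bounded away from $\ms$'' supplies: on such a region the finitely many BPS rays relevant at each step make pairwise angles bounded below by a positive constant, and the constants $C, C', C''$ depend only on this minimal angle, hence only on the distance of $u$ from $\ms$.
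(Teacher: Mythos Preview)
Your proof is correct and follows exactly the approach the paper intends: the paper does not give a detailed argument but simply states that ``it is not hard to generalize this example'' (the $n=2$ computation immediately preceding the lemma), and your induction is precisely that generalization. One small technical point worth tightening: your inductive hypothesis is phrased for $\z$ in a compact set bounded away from BPS rays, but in the inductive step $\z'$ ranges over the entire unbounded ray $\ell_{\gamma_{\T}}$; the correct formulation is that $|\G_{\T_a}(\z')|$ is bounded by $C_a\exp(-C'n_a R)$ uniformly for $\z'$ on any ray making an angle at least $\vartheta_0>0$ with the rays appearing in $\T_a$, with constants depending only on $\vartheta_0$. You essentially say this in your final paragraph, so this is only a matter of stating the hypothesis in that form from the outset.
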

A similar result holds for $N_f > 0$ (precisely by the same argument).\\
\textbf{Example} We can apply the asymptotic expansion \eqref{perturbative} to study the GMN connection $\A_{\zeta}$. First notice that the scale invariance/R-symmetry equation \eqref{scale} follows at least formally (modulo convergence issues) from the asymptotic expansion. This is because each ``propagator" $\G_{\T}(\z)$ separately satisfies \eqref{scale}. This is not hard to check by induction, using the recursive definition \eqref{Gfunct} and the fact that \eqref{scale} holds already for the semiflat coordinates $\Xsf_{\gamma}(\z)$. (Indeed we can use this argument and the above asymptotic expansion for the Ooguri-Vafa metric to give a different proof of Lemma \ref{scaleLemma}).

One can also obtain an asymptotic expansion for $\A_{\zeta}$ by combining \eqref{perturbative} with the explicit expression \eqref{zetaConnection}. Picking a basis of local sections $\gamma_e, \m$ as usual, we write the matrix of angular derivatives as 
\begin{equation*}
\del_{\th}\X = \left(\begin{matrix}
\del_{\th_e}\X_{e}     & \del_{\th_e}\X_{m}     \\
\del_{\th_m}\X_{e}    & \del_{\th_m}\X_{m}    
\end{matrix}\right).
\end{equation*}
By \eqref{perturbative}, and using the expression for semiflat coordinates \eqref{sfCoords}, we have
\begin{equation*}
\del_{\theta}\X = \left(\begin{matrix}
i\X_e     & 0 \\
0          & i\X_{m}    
\end{matrix}\right)\left(I - B\right),
\end{equation*}
where $B$ is the matrix of instanton corrections
\begin{equation*}
B = i \left(\begin{matrix}
\bra \gamma_e, \sum_{\T}\W_T\del_{\th_{e}}\G_{\T} \ket   & \bra \gamma_m, \sum_{\T}\W_T\del_{\th_{e}}\G_{\T} \ket\\
\bra \gamma_e, \sum_{\T}\W_T\del_{\th_{m}}\G_{\T} \ket  & \bra \gamma_m, \sum_{\T}\W_T\del_{\th_{m}}\G_{\T} \ket
\end{matrix}\right).
\end{equation*}
According to \eqref{zetaConnection}, the complex vector field $\A_{\z}$ is given in local coordinates by
\begin{align*}
\A_{\zeta}  = \left(\del_{\z}\X_e (\del_{\th}\X)^{-1}_{11} + \del_{\z}\X_m (\del_{\th}\X)^{-1}_{21}\right)\del_{\th_e} + \left(\del_{\z}\X_e (\del_{\th}\X)^{-1}_{12} + \del_{\z}\X_m (\del_{\th}\X)^{-1}_{22}\right)\del_{\th_m}.  
\end{align*}
We have 
\begin{align*}
\del_{\z}\X_{\gamma} = \X_{\gamma}\left(\frac{1}{\z^2} \pi R Z_{\gamma}  + \pi R \bar{Z}_{\gamma} + \bra \gamma, \sum_{\mathcal{T}} \mathcal{W}_{\mathcal{T}} \del_{\z}\mathcal{G}_{\mathcal{T}}\ket \right),
\end{align*}
and combining this with 
\begin{equation*}
(\del_{\th}\X)^{-1} = -i\left(I + B + B^2 + \ldots\right)\left(\begin{matrix}
\X^{-1}_e     & 0 \\
0          & \X^{-1}_{m}    
\end{matrix}\right)
\end{equation*}
we find
\begin{align*}
\A_{\z, \th_e} = &-i \left(\frac{1}{\z^2} \pi R Z_{\gamma_e}  + \pi R \bar{Z}_{\gamma_e}  + \bra \gamma_e, \sum_{\mathcal{T}} \mathcal{W}_{\mathcal{T}} \del_{\z}\mathcal{G}_{\mathcal{T}}\ket \right)  \sum_{k \geq 0} (B^k)_{11}\\
&- i \left(\frac{1}{\z^2} \pi R Z_{\m}  + \pi R \bar{Z}_{\m}  + \bra \m, \sum_{\mathcal{T}} \mathcal{W}_{\mathcal{T}} \del_{\z}\mathcal{G}_{\mathcal{T}}\ket \right)\sum_{k \geq 0} (B^k)_{21},
\end{align*}
and similarly
\begin{align*}
\A_{\z, \th_m} = &-i \left(\frac{1}{\z^2} \pi R Z_{\gamma_e}  + \pi R \bar{Z}_{\gamma_e}  + \bra \gamma_e, \sum_{\mathcal{T}} \mathcal{W}_{\mathcal{T}} \del_{\z}\mathcal{G}_{\mathcal{T}}\ket \right)  \sum_{k \geq 0} (B^k)_{12}\\
&- i \left(\frac{1}{\z^2} \pi R Z_{\m}  + \pi R \bar{Z}_{\m}  + \bra \m, \sum_{\mathcal{T}} \mathcal{W}_{\mathcal{T}} \del_{\z}\mathcal{G}_{\mathcal{T}}\ket \right)\sum_{k \geq 0} (B^k)_{22}.
\end{align*}
In the special case of the Ooguri-Vafa metric the matrix $B$ is nilpotent, 
\begin{equation*}
B = \left(\begin{matrix}
0 & -\frac{1}{4\pi i} \sum_{k \neq 0} \int_{\ell_{\operatorname{sgn}(k)\gamma_e}}d\z'\rho(\z, \z') i \Xsf_{k\gamma_e}(\z')\\
0 & 0
\end{matrix}
\right).  
\end{equation*}
Thus in this case only the magnetic component $\A_{\z, \th_m}$ carries nontrivial instanton corrections, in accordance with the expression for the $a$ component of the GMN connection, \eqref{aConnection}, and the scale invariance/\rm{R}-symmetry equation \eqref{scale}.  

These computations show that \eqref{perturbative} is also an effective tool to study the GMN connection $\z\del_{\z} - \A_{\z}$. As a consequence we believe that relating the asymptotic expansion \eqref{perturbative} to the Joyce-Song formula as in Conjecture \ref{mainConj} could also be helpful in establishing a comparison between the GMN connection and the Bridgeland-Toledano-Laredo connection \cite{bt}.
\subsection{The wall-crossing formula}\label{wallcrossSect} Let $u_0 \in \ms$ denote a smooth point. Moreover assume that the fibre $\M_{u_0}$ is smooth. In particular we can choose local coordinates $(u, \theta)$ around $u_0$ (corresponding to a local trivialization $\Gamma \cong \Z^2$ around $u_0$). In a neighborhood of $u_0$, the complement $\B\setminus \ms$ is the union of two connected components, $U^{\pm}$. We will sometimes write $u^{\pm}$ for a point of $U^{\pm}$. We define
\begin{equation*}
\X_{\gamma}(u^{\pm}_0, \theta; \z) = \lim_{u \in U^{\pm}, u\to u_0}\X_{\gamma}(u, \theta; \z).
\end{equation*}
Notice that both limits exist and are finite, since the central charge $Z(u)$ is well defined around $u_0$. According to \cite{gmn}, the Kontsevich-Soibelman wall-crossing formula can be expressed as the continuity condition
\begin{equation}\label{continuity}
\X_{\gamma}(u^{+}_0, \theta; \z) = \X_{\gamma}(u^{-}_0, \theta; \z)
\end{equation} 
for all $\gamma, \theta$ and a generic, fixed value of $\z$. Since the pairing $\bra - , - \ket$ is nondegenerate, by \eqref{perturbative} this condition is equivalent to
\begin{equation}\label{wallcross}
\sum_{\T} \W^+_{\T} \G^+_{\T}(\zeta) = \sum_{\T} \W^-_{\T} \G^-_{\T}(\zeta),
\end{equation}
where of course  
\begin{equation*}
\G^{\pm} =  \lim_{u \in U^{\pm}, u\to u_0} \G^{\pm},\,\,\, \W^{\pm} = \W(u^{\pm}).  
\end{equation*}
\section{Basic examples from $SU(2)$ Seiberg-Witten theories}\label{basic}
In this section we check Conjecture \ref{mainConj} in a number of examples taken from $0 \leq N_f \leq 3$ Seiberg-Witten theories. In \ref{combiSection} we give a simple graphical calculus to evaluate the contribution to wall-crossing of a GMN diagram, at least for the $N_f = 0$ case, and explain how this turns Conjecture \ref{mainConj} into a purely combinatorial statement. A similar calculus is also available when $N_f > 0$ (although it is slightly more complicated due to the presence of more charges), but rather than explaining this in detail we focus on a few examples that show how to refine the $N_f = 0$ theory. 
\subsection{Standard Seiberg-Witten.} We start with pure $SU(2)$ Seiberg-Witten theory. We illustrate in several cases how the identity \eqref{wallcross} (which arises from the continuity of holomorphic Darboux coordinates, when combined with the asymptotic expansion) induces the Joyce-Song formula \eqref{joyceFormula}. We write $s, w$ for the slope functions induced by the central charge $Z$ at strong and weak coupling respectively, namely for $\gamma \in \Gamma$ a local section
\begin{equation*}
s(\gamma) = \arg Z_{\gamma}(u^+),\,\,\,w(\gamma) = \arg Z_{\gamma}(u^-).
\end{equation*} 
(where $u^{\pm}$ is a point of $U^{\pm}$, a connected component of $\B \setminus \ms$ around $u_0$). Then we have
\begin{equation}\label{slopes}
s(\d) > s(\m),\,\,\,w(\d) < w(\m),
\end{equation}
that is for BPS rays,
\begin{center}
\centerline{
\xymatrix{   &   &   &   &*=0{}\ar@{~>}[dllll]!U|{\ell_{\m}^+}\ar[ddllll]!U|{\ell_{\d}^-}\ar[ddddll]!U|{\ell_{\m}^-}\ar@{~>}[ddddl]!U|{\ell_{\d}^+}\\
                 &   &   &   &           \\
                 &   &   &   &           \\
                 &   &   &   &           \\
                 &   &   &   &}}
\end{center} 

Throughout this section we assume that the reader is familiar with the Joyce-Song wall-crossing formula, as presented in \cite{js} Section 5. Since there are already excellent short expositions of this formula (including \cite{joySurvey} and \cite{pioline}), we refrain from reviewing it here, but for the reader's convenience we reproduce the explicit formula for the $\U$ functions, equation (3.8) in \cite{js}. Let $\alpha_1, \ldots, \alpha_n$ be a collection of charges (with $n \geq 1$).  If for all $i = 1, \ldots, n-1$ we have either
\begin{enumerate}
\item $s(\alpha_i) < s(\alpha_{i + 1})$ and $w(\alpha_1 + \cdots + \alpha_i) \geq w(\alpha_{i+1} + \cdots + \alpha_n)$, or
\item $s(\alpha_i) \geq s(\alpha_{i + 1})$ and $w(\alpha_1 + \cdots + \alpha_i) < w(\alpha_{i+1} + \cdots + \alpha_n)$
\end{enumerate} 
then one defines $\S(\alpha_1, \ldots, \alpha_n; s, w)$ to be $(-1)^{\#\{\text{indices satisfying (1)}\}}$. Otherwise $\S(\alpha_1, \ldots, \alpha_n; s, w)$ vanishes. Then one defines
\begin{align*}
&\U(\alpha_1,\ldots,\alpha_n; s, w)=\\
&\sum_{\begin{subarray}{l} \phantom{wiggle}\\
1\leq l\leq m\leq n,\;\> 0=a_0<a_1<\cdots<a_m=n,\;\>
0=b_0<b_1<\cdots<b_l=m:\\
\text{Define $\beta_1,\ldots,\beta_m$ by
$\beta_i=\alpha_{a_{i-1}+1}+\cdots+\alpha_{a_i}$.}\\
\text{Define $\gamma_1,\ldots,\gamma_l$ by
$\gamma_i=\beta_{b_{i-1}+1}+\cdots+\beta_{b_i}$.}\\
\text{Then $s(\beta_i)=s(\alpha_j)$, $i=1,\ldots,m$,
$a_{i-1}<j\le a_i$,}\\
\text{and $w(\gamma_i)=w(\alpha_1 + \cdots + \alpha_n)$, $i=1,\ldots,l$}
\end{subarray}
\!\!\!\!\!\!\!\!\!\!\!\!\!\!\!\!\!\!\!\!\!\!\!\!\!\!\!\!\!\!\!\!\!
\!\!\!\!\!\!\!\!\!\!\!\!\!\!\!\!\!\!\!\!\!\!\!\!\!\!\!\!\!\!\!\!\!
\!\!\!\!\!\!\!\!\!\!\!\!\!\!\!\!\!\!\!\!}
\begin{aligned}[t]
\frac{(-1)^{l-1}}{l}\cdot\prod\nolimits_{i=1}^l\S(\beta_{b_{i-1}+1},
\beta_{b_{i-1}+2},\ldots,\beta_{b_i}; s, w)&\\
\cdot\prod_{i=1}^m\frac{1}{(a_i-a_{i-1})!}&\,.
\end{aligned}
\nonumber
\end{align*}
\noindent\textbf{Restriction of root label.} Throughout this section, the roots chosen for the GMN diagrams will always carry a label which is a multiple of $\delta$. It is worth pointing out explicitly that this is because we choose to consider only those GMN diagrams that give a nontrivial correction to $\Xsf_m(\z)$, and for this choice a tree with root carrying a label multiple of $\gamma_m$ would not contribute. We could as well have restricted to diagrams with root label a multiple of $\gamma_m$ (by considering only corrections to $\Xsf_{e}(\z)$).
 
 \subsubsection{$W$ boson of charge $\d + \m$.} The simplest computation concerns $\dt(\d +  \m)$. It has long been known to physicists that in the gauge theory at weak coupling there exists a unique BPS state of charge $\d + \m$, called a ``$W$ boson", which contributes $-2$ to the index $\Omega(\d + \m; u^-)$. This is reflected in the Joyce-Song formula as follows. There is precisely one tree with $2$ vertices labelled by $\{1,2\}$ and a compatible orientation, namely
\begin{center}
\centerline{
\xymatrix{1 \ar[r] & 2}}
\end{center}
The $\U$ symbols of the admissible partitions (which in this case are in fact all the partitions with nonvanishing $\dt$) coincide with the $\S$ symbols,  
\begin{equation*}
\U(\d, \m; s, w) = 1,\,\,\,\U(\m, \d; s, w) = -1. 
\end{equation*}
The first partition contributes
\begin{equation*}
-\frac{1}{2}\U(\d, \m)(-1)^{\bra \d, \m \ket}\bra \d,  \m \ket\dt(\d, s)\dt(\m, s) = -\frac{1}{2} \cdot 1 \cdot 1 \cdot 2 \cdot 1 \cdot 1 = -1,
\end{equation*}
and the second
\begin{equation*}
-\frac{1}{2}\U(\m, \d)(-1)^{\bra \m, \d \ket}\bra \m,  \d \ket\dt(\m, s)\dt(\d, s) = -\frac{1}{2} \cdot (-1) \cdot 1 \cdot (-2) \cdot 1 \cdot 1 = -1,
\end{equation*}
so we find indeed $\dt(\d + \m; s, w) = -2$. Let us consider the analogous decay in GMN theory. We need only consider the rooted, labelled tree (with the induced orientation)
\begin{center}
\centerline{
\xymatrix{*+[F]{\d} \ar[r] & \m}}
\end{center} 
which is present at both strong and weak coupling, and encodes an integral we have already encountered,
\begin{equation*} I(u) = \frac{2\d}{4\pi i}\int_{\ell_{\d}(u)} d\z_1\rho(\z, \z_1)\Xsf_{\d}(u; \z_1)\frac{1}{4\pi i}\int_{\ell_{\m}(u)}d\z_2\rho(\z_1, \z_2)\Xsf_{\m}(u; \z_2).
\end{equation*} 
We proved that $I(u)$ is of order $e^{-2R}$ away from $\ms$. For the wall-crossing however we need to study $I(u^+_0) - I(u^-_0)$, that is the limit of $I(u^+) - I(u^-)$ as $u^{\pm} \to u^0$. We fix $\z$ outside the cone spanned by $\ell_{\d}(u^+), \ell_{\m}(u^-)$. Starting with $I(u^+)$, we can push the first ray of integration $\ell_{\d}(u^+)$ to $\ell_{\d}(u^-)$ without crossing $\ell_{\m}(u^+)$, so we rewrite
\begin{equation*}
I(u^+) = \frac{2\d}{4\pi i}\int_{\ell_{\d}(u^-)} d\z_1\rho(\z, \z_1)\Xsf_{\d}(u^+; \z_1)\frac{1}{4\pi i}\int_{\ell_{\m}(u^+)}d\z_2\rho(\z_1, \z_2)\Xsf_{\m}(u^+; \z_2).
\end{equation*}
The next step is to push the second ray of integration $\ell_{\m}(u^+)$ to $\ell_{\m}(u^-)$. In the process we cross the ray $\ell_{\d}(u^-)$ in the counterclockwise direction, 
\begin{center}
\centerline{
\xymatrix{   &   &   &   &*=0{}\ar@{~}[dllll]!U|{\ell_{\m}^+}\ar@{--}[ddllll]!U|{\z_1\in\ell_{\d}^-}\ar@{-}[ddddll]!U|{\ell_{\m}^-}\ar@{~>}[ddddl]!U|{\ell_{\d}^+}\\
               *=0{}\ar@/_2pc/@{->}[dddrr] &   &   &   &           \\
                 &   &   &   &           \\
                 &   &   &   &           \\
                 &   &   *=0{}&   &}}
\end{center} 
and so pick up an extra residue of the integrand at $\z_1$, 
\begin{align*}
I(u^+) = \frac{2\d}{4\pi i}&\int_{\ell_{\d}(u^-)} d\z_1\rho(\z, \z_1)\Xsf_{\d}(u^+; \z_1)\\
&\cdot\left(\frac{1}{4\pi i}\int_{\ell_{\m}(u^-)}d\z_2\rho(\z_1, \z_2)\Xsf_{\m}(u^+; \z_2) + \Xsf_{\m}(u^+; \z_1)\right). 
\end{align*}
Since $\Xsf_{\d}(u; \z_1), \Xsf_{\m}(u; \z_2)$ are smooth in a neighborhood of $u_0$, the limit of the first term in $I(u^+)$ as $u^+ \to u_0$ is the same as the limit of $I(u^-)$, therefore
\begin{align*}
I(u^+_0) - I(u^-_0) &= \lim_{u^{\pm}\to 0} \frac{2\d}{4\pi i}\int_{\ell_{\d}(u^-)} d\z_1\rho(\z, \z_1)\Xsf_{\d}(u^+; \z_1)\Xsf_{\m}(u^+; \z_1)\\
&= \lim_{u^{\pm}\to 0} \frac{2\d}{4\pi i}\int_{\ell_{\d}(u^-)} d\z_1\rho(\z, \z_1)\Xsf_{\d + \m}(u^+; \z_1)\\
&= \lim_{u^{\pm}\to 0} \frac{2\d}{4\pi i}\int_{\ell_{\d + \m}(u^-)} d\z_1\rho(\z, \z_1)\Xsf_{\d + \m}(u^+; \z_1)\\
&=  \frac{2\d}{4\pi i}\int_{\ell_{\d + \m}(u_0)} d\z_1\rho(\z, \z_1)\Xsf_{\d + \m}(u_0; \z_1).
\end{align*}
Cancellation requires the existence of the integral at weak coupling
\begin{equation*}
-\frac{2\d}{4\pi i}\int_{\ell_{\d + \m}(u^-)} d\z_1\rho(\z, \z_1)\Xsf_{\d + \m}(u^-; \z_1),
\end{equation*} 
from which we read off $\Omega(\d + \m; u^-) = -2$ as required.\\

\noindent\textbf{Application of the Fubini theorem.} Notice that we may as well have pushed $\ell_{\m}(u^+) \to \ell_{\m}(u^-)$ first, leading to the integral
\begin{equation*}
\frac{2\d}{4\pi i}\int_{\ell_{\d}(u^+)} d\z_1\rho(\z, \z_1)\Xsf_{\d}(u^+; \z_1)\frac{1}{4\pi i}\int_{\ell_{\m}(u^-)}d\z_2\rho(\z_1, \z_2)\Xsf_{\m}(u^+; \z_2).
\end{equation*}
The final result for $\Omega(\d + \m; u^-)$ must of course be the same. To see this notice that in order to change $\ell_{\d}(u^+) \to \ell_{\d}(u^-)$ we need to use the Fubini theorem first, rewriting the integral as
\begin{equation*}
\frac{2\d}{4\pi i}\int_{\ell_{\m}(u^-)}d\z_2\Xsf_{\m}(u^+; \z_2)\frac{1}{4\pi i}\int_{\ell_{\d}(u^+)}d\z_1\rho(\z, \z_1)\rho(\z_1, \z_2)\Xsf_{\d}(u^+; \z_1).
\end{equation*}
Now $\ell_{\d}(u^+) \to \ell_{\d}(u^-)$ crosses $\ell_{\m}(u^-)$ clockwise, so we pick up a $-1$ factor. However this is compensated by the opposite sign of the residue: $\operatorname{Res}_{\z_1 = \z^*_2} \rho(\z_1, \z^*_2) = - \operatorname{Res}_{\z_2 = \z^*_1} \rho(\z^*_1, \z_2)= -2$. So the integral contributes $2\d$, and we find the correct result $\Omega(\d + \m; u^-) = -2$. In more complicated examples it will be necessary to apply the Fubini theorem to reduce the integrals, and one should keep in mind the cancellation of signs pointed out here.\\

\noindent\textbf{Restriction to effective integrals.} We pause for a moment to point out explicitly why, in the computation of $\Omega(a\d + b\m)$ with $a, b\geq 1$, we will only need to consider the contribution of diagrams whose vertices are labelled by \emph{positive} multiples of $\d$ or $\m$. We claim that a diagram $\T$ can only give a contribution to $\Omega(a\d + b\m)$ through wall-crossing if its vertices are all labelled by positive multiples of $\d$ or $\m$. Suppose this is not the case for $\T$, and pick a vertex $v \in \T$ labelled by $-\alpha$, where $\alpha$ is a positive multiple of $\d$ or $\m$, such that $v$ has minimal distance from the root. Thus the integral $\G_{\T}(u^+)$ contains a segment
\begin{equation*}
\cdots \int_{\ell_{-\alpha}(u^+)} d\z'' \rho(\z',\z'')\Xsf_{-\alpha}(u^+,\z'')\cdots 
\end{equation*} 
By our assumption on $v$ and since $\ell_{-\alpha}(u^+) = -\ell_{\alpha}(u^+)$ lies in a half-plane opposite to that of all integration rays that preceed it, we can push $\ell_{-\alpha}(u^+)$ to $\ell_{-\alpha}(u^-)$ without picking up a residue contribution. Furthermore the only integration rays that can cross $\ell_{-\alpha}(u^-)$ (when applying the residue theorem) are again of the form $\ell_{-\beta}(u^+)$, where $\beta$ is a positive multiple of $\d$ or $\m$. It follows that all integrals obtained from $\G_{\T}$ by moving integration rays $\ell(u^+) \to \ell(u^-)$ always contain a factor $\int_{\ell_{-\gamma}(u^-)} d\z'' \rho(\z',\z'')\Xsf_{-\gamma}(u^+,\z'')$, where $\gamma$ is a positive combination of $\d$ and $\m$, and so are never of the correct form to give a contribution to $\Omega(a\d + b\m)$ with $a, b$ positive.
\subsubsection{Dyon of charge $\d + 2\m$.} Classical physical arguments (e.g. \cite{bilal}) predict the existence of a BPS state of charge $\d + 2\m$ at weak coupling, called a dyon, with index $\Omega(\d + 2\m; u^-) = 1$. Let us work out \eqref{joyceFormula} in this case. The $\S$ symbols for partitions are easily derived from \eqref{slopes},
\begin{equation*}
\begin{matrix}
\S(\d, 2\m; s, w) = 1,\,\,\,\S(2\m, \d; s, w) = -1,\\
\S(\d, \m, \m; s, w) = 0,\,\,\,\S(\m, \d, \m; s, w) = -1,\,\,\,\S(\m,\m,\d; s, w) = 1.
\end{matrix}
\end{equation*}
Since the class $\d + 2\m$ is \emph{primitive}, it is easy to derive from this their $\U$ symbols (i.e. in this case they are a weighted sum over contractions),
\begin{equation*}
\begin{matrix}
\U(\d, 2\m; s, w) = 1,\,\,\,\U(2\m, \d; s, w) = -1,\\
\U(\d, \m, \m; s, w) = \frac{1}{2}, \U(\m, \d, \m; s, w) = -1,\,\,\,\U(\m,\m,\d; s, w) = \frac{1}{2}.
\end{matrix}
\end{equation*}
Consider again the tree
\begin{center}
\centerline{
\xymatrix{1 \ar[r] & 2}}
\end{center}
Its compatible ordered partitions are $\d + 2\m$ and $2\m + \d$. The first contributes 
\begin{equation*}
-\frac{1}{2}\U(\d, 2\m)(-1)^{\bra \d, 2\m \ket}\bra \d, 2\m \ket\dt(\d, s)\dt(2\m, s) = -\frac{1}{2} \cdot 1 \cdot 1 \cdot 4 \cdot 1 \cdot \frac{1}{4} = -\frac{1}{2}.
\end{equation*}
Similarly $2\m + \d$ also contributes $-\frac{1}{2}$. The total contribution of the fixed tree is $-1$. On the GMN side we have the integral
\begin{equation*}
\frac{\delta}{4\pi i} \int_{\ell_{\delta}(u)} d\z_1 \rho(\z, \z_1)\chi^{\sf}_{\delta}(u; \z_1) \frac{1}{4\pi i} \int_{\ell_{\gamma_m}(u)} d\z_2 \rho(\z_1, \z_2)\chi^{\sf}_{2\gamma_m}(u; \z_2).
\end{equation*}
By a computation completely analogous to the $\d + \m$ case, cancellation for this integral requires the existence of a term
\begin{equation*}
-\frac{\delta}{4\pi i} \int_{\ell_{\delta + 2\m}(u^-)} d\z_1 \rho(\z, \z_1)\chi^{\sf}_{\delta + 2\m}(\z_1, u^-)
\end{equation*}
in the weak coupling region, which therefore contributes $-1$ to $\Omega(\d + 2\m; u^-)$. Let us go back to the JS side for the remaining trees
\begin{center}
\centerline{
\xymatrix{1 \ar[r] & 2 \ar[r] & 3 &   & 2 & \ar[l] 1 \ar[r] & 3\\
                   &          & 1 \ar[r] & 3 & \ar[l] 2 &   &  }}
\end{center} 
Each of these admits exactly one compatible partition with nonvanishing contribution, e.g. for the first tree this is $\m + \d + \m$, giving 
\begin{equation*}
\frac{(-1)^2}{4} \U(\m, \d, \m; s, w)\bra\m, \d\ket \bra\d, \m\ket\dt(\m)^2\dt(\d) = \frac{1}{4}\cdot(-1)\cdot(-2) \cdot 2 \cdot 1 = 1.
\end{equation*}
The two other trees each contribute $1/2$, so the total contribution here is $2$. This gives the right DT invariant: $\dt(\d + 2\m; u^-) = \Om(\d + 2\m; u^-) = 2 - 1 = 1$. On the GMN side, we need only consider the rooted, labelled tree (with the induced orientation)
\begin{center}
\centerline{
\xymatrix{\m &*+[F]{\d} \ar[l] \ar[r]& \m}
}
\end{center}
This has $\Z/2$ symmetry, so we have $\W = (-1)^3\frac{1}{2}\delta \bra \delta, \m\ket^2 = -2\delta$, and our tree encodes the integral
\begin{align*}
\nonumber I(u^+) &= -\frac{2\d}{4\pi i}\int_{\ell_{\d}(u^+)}d\z_1 \rho(\z, \z_1)\chi^{\sf}_{\d}(\z_1, u^+)\left(\frac{1}{4\pi i}\int_{\ell_{\m}(u^+)}d\z_2\rho(\z_1, \z_2)\chi^{\sf}_{\m}(\z_2, u^+)\right)^2\\
\nonumber &= -\frac{2\d}{4\pi i}\int_{\ell_{\d}(u^-)}d\z_1 \rho(\z, \z_1)\chi^{\sf}_{\d}(\z_1, u^+)\left(\frac{1}{4\pi i}\int_{\ell_{\m}(u^+)}d\z_2\rho(\z_1, \z_2)\chi^{\sf}_{\m}(\z_2, u^+)\right)^2\\
\nonumber &= -\frac{2\d}{4\pi i}\int_{\ell_{\d}(u^-)}d\z_1 \rho(\z, \z_1)\chi^{\sf}_{\d}(\z_1, u^+)\\
&\hskip1cm\cdot\left(\frac{1}{4\pi i}\int_{\ell_{\m}(u^-)}d\z_2\rho(\z_1, \z_2)\chi^{\sf}_{\m}(\z_2, u^+) + \chi^{\sf}_{\m}(\z_1, u^+)\right)^2.
\end{align*}
This splits up as a sum of terms, namely
\begin{equation*}
-\frac{2\d}{4\pi i}\int_{\ell_{\d}(u^-)}d\z_1 \rho(\z, \z_1)\chi^{\sf}_{\d}(\z_1, u^+)\left(\frac{1}{4\pi i}\int_{\ell_{\m}(u^-)}d\z_2\rho(\z_1, \z_2)\chi^{\sf}_{\m}(\z_2, u^+)\right)^2, 
\end{equation*}
\begin{equation*}
-\frac{2\d}{4\pi i}\int_{\ell_{\d}(u^-)}d\z_1 \rho(\z, \z_1)\chi^{\sf}_{\d + \m}(\z_1, u^+)\frac{1}{2\pi i}\int_{\ell_{\m}(u^-)}d\z_2\rho(\z_1, \z_2)\chi^{\sf}_{\m}(\z_2, u^+),
\end{equation*}
and
\begin{equation*}
-\frac{2\d}{4\pi i}\int_{\ell_{\d}(u^-)}d\z_1 \rho(\z, \z_1)\chi^{\sf}_{\d + 2\m}(\z_1, u^+).
\end{equation*}
We only need to take into account the last two terms. The last integral can be rewritten as usual as
\begin{equation*}
-\frac{2\d}{4\pi i}\int_{\ell_{\d + 2\m}(u^-)}d\z_1 \rho(\z, \z_1)\chi^{\sf}_{\d + 2\m}(\z_1, u^+),
\end{equation*}
and therefore its cancellation requires a contribution $2$ to $\Omega(\d + 2\m; u^-)$. On the other hand, we can push the ray $\ell_{\d}(u^-)$ in the second integral to $\ell_{\d + \m}(u^-)$ without crossing $\ell_{\m}(u^-)$. Therefore there is no residue contribution, and no cancellation is required from $\Omega(\d + 2\m; u^-)$. We also make an important observation: the GMN contribution from the tree 
\begin{center}
\centerline{
\xymatrix{\m  &*+[F]{\d}\ar[l]\ar[r] & \m}
}
\end{center}
(i.e. 2) matches the total contribution of the trees
\begin{center}
\centerline{
\xymatrix{\m \ar[r] & \d \ar[r] & \m &   & \m & \ar[l] \d \ar[r] & \m\\
                   &          & \m \ar[r] & \d & \ar[l] \m &   &  }}
\end{center} 
appearing in Joyce-Song (i.e. $1 + \frac{1}{2} + \frac{1}{2}$). Notice that the orientations of Joyce-Song trees are arbitrary, in particular they are not in general induced by the choice of a root.\\
\subsubsection{Dyon of charge $2\d + 3\m$.} At weak coupling, we expect a state (dyon) of charge $2\d + 3\m$, with $\Omega(2\d + 3\m; u^-) = 1$. We will compute with a number of sample trees in GMN theory, and check that the contribution of (all the choices of a root for) a given tree matches the contribution of all its the orientations in JS theory. This is the first computation in which most of the aspects of the full mechanism matching GMN to JS can be seen in action. We start with the unoriented, labelled graph
\begin{center} 
\centerline{
\xymatrix{\d \ar@{-}[r] & \ar@{-}[r] \m \ar@{-}[r] & \d \ar@{-}[r] & 2\m}
}
\end{center}
For GMN theory an orientation is uniquely defined by picking a root. We first analyse the choice
\begin{center} 
\centerline{
\xymatrix{*+[F]{ \d} \ar[r] & \ar[r] \m \ar[r] & \d \ar[r] & 2\m}
}
\end{center}
Since $\W = (-1)^4\d\bra \d, \m\ket \bra \m, \d\ket \bra \d, \frac{1}{2}\m\ket = -4\d$, this diagram encodes an instanton correction at strong coupling given by the integral 
\begin{align*}
\nonumber -2\frac{2\delta}{4\pi i}&\int_{\ell_{\d}(u^+)}d\z_1\rho(\z, \z_1)\chi^{\sf}_{\d}(\z_1, u^+)\frac{1}{4\pi i} \int_{\ell_{\m}(u^+)}d\z_2\rho(\z_1, \z_2)\chi^{\sf}_{\m}(\z_2, u^+)\\
&\cdot\frac{1}{4\pi i} \int_{\ell_{\d}(u^+)}d\z_3\rho(\z_2, \z_3)\chi^{\sf}_{\d}(\z_3, u^+)\frac{1}{4\pi i}\int_{\ell_{\m}(u^+)}d\z_4\rho(\z_3, \z_4)\chi^{\sf}_{2\m}(\z_4, u^+). 
\end{align*}
As usual, we can push the first integration ray $\ell_{\d}(u^+)$ to $\ell_{\d}(u^-)$ with impunity; then pushing the second ray $\ell_{\gamma_m}(u^+)$ to $\ell_{\gamma_m}(u^-)$ splits the integral as 
\begin{align*}
-2\frac{2\delta}{4\pi i}&\int_{\ell_{\d}(u^-)}d\z_1\rho(\z, \z_1)\chi^{\sf}_{\d}(\z_1, u^+)\frac{1}{4\pi i} \int_{\ell_{\m}(u^-)}d\z_2\rho(\z_1, \z_2)\chi^{\sf}_{\m}(\z_2, u^+)\\
&\cdot\frac{1}{4\pi i} \int_{\ell_{\d}(u^+)}d\z_3\rho(\z_2, \z_3)\chi^{\sf}_{\d}(\z_3, u^+)\frac{1}{4\pi i}\int_{\ell_{\m}(u^+)}d\z_4\rho(\z_3, \z_4)\chi^{\sf}_{2\m}(\z_4, u^+)
\end{align*}
plus a residue term
\begin{align*}
-2\frac{2\delta}{4\pi i}&\int_{\ell_{\d}(u^-)}d\z_1\rho(\z, \z_1)\chi^{\sf}_{\d}(\z_1, u^+)\chi^{\sf}_{\m}(\z_1, u^+)\\
&\cdot\frac{1}{4\pi i} \int_{\ell_{\d}(u^+)}d\z_3\rho(\z_1, \z_3)\chi^{\sf}_{\d}(\z_3, u^+)\frac{1}{4\pi i}\int_{\ell_{\m}(u^+)}d\z_4\rho(\z_3, \z_4)\chi^{\sf}_{2\m}(\z_4, u^+). 
\end{align*}
The first integral can only contribute $-2\frac{2\delta}{4\pi i}\int_{\ell_{\d}(u^-)}d\z_1\rho(\z, \z_1)\chi^{\sf}_{\d}(\z_1, u^+) \cdot J$ for some iterated integral $J$, so it does not give top order corrections. On the other hand we may rewrite the residue term as
\begin{align*}
-2\frac{2\delta}{4\pi i}&\int_{\ell_{\d + \m}(u^-)}d\z_1\rho(\z, \z_1)\chi^{\sf}_{\d + \m}(\z_1, u^+)\\
&\cdot\frac{1}{4\pi i} \int_{\ell_{\d}(u^+)}d\z_3\rho(\z_1, \z_3)\chi^{\sf}_{\d}(\z_3, u^+)\frac{1}{4\pi i}\int_{\ell_{\m}(u^+)}d\z_4\rho(\z_3, \z_4)\chi^{\sf}_{2\m}(\z_4, u^+). 
\end{align*}
We iterate the procedure, pushing $\ell_{\d}(u^+)$ to $\ell_{\d}(u^-)$. This gives 
\begin{align}\label{firstIntegral}
\nonumber -2 \frac{2\delta}{4\pi i}&\int_{\ell_{\d + \m}(u^-)}d\z_1\rho(\z, \z_1)\chi^{\sf}_{\d + \m}(\z_1, u^+)\\
&\frac{1}{4\pi i} \int_{\ell_{\d}(u^-)}d\z_3\rho(\z_1, \z_3)\chi^{\sf}_{\d}(\z_3, u^+)\frac{1}{4\pi i}\int_{\ell_{\m}(u^+)}d\z_4\rho(\z_3, \z_4)\chi^{\sf}_{2\m}(\z_4, u^+)
\end{align}
plus a residue term (notice sign change, as $\ell_{\d}(u^+)$ crosses $\ell_{\d + \m}(u^-)$ in the clockwise direction)
\begin{align}\label{someIntegral}
\nonumber 2 \frac{2\delta}{4\pi i}&\int_{\ell_{\d + \m}(u^-)}d\z_1\rho(\z, \z_1)\chi^{\sf}_{\d + \m}(\z_1, u^+)\chi^{\sf}_{\d}(\z_1, u^+)\\
&\cdot\frac{1}{4\pi i}\int_{\ell_{\m}(u^+)}d\z_4\rho(\z_1, \z_4)\chi^{\sf}_{2\m}(\z_4, u^+). 
\end{align}
The main difference is that now the first integral \eqref{firstIntegral} could give a top degree contribution. To see this push the last ray $\ell_{\m}(u^+)$ in \eqref{firstIntegral} to $\ell_{\m}(u^-)$, which splits the integral as
\begin{align*}
\nonumber -2 \frac{2\delta}{4\pi i}&\int_{\ell_{\d + \m}(u^-)}d\z_1\rho(\z, \z_1)\chi^{\sf}_{\d + \m}(\z_1, u^+)\\
&\cdot\frac{1}{4\pi i} \int_{\ell_{\d}(u^-)}d\z_3\rho(\z_1, \z_3)\chi^{\sf}_{\d}(\z_3, u^+)\frac{1}{4\pi i}\int_{\ell_{\m}(u^-)}d\z_4\rho(\z_3, \z_4)\chi^{\sf}_{2\m}(\z_4, u^+)
\end{align*}
plus a residue term
\begin{align}\label{use4Nf2}
\nonumber -2\frac{2\delta }{4\pi i}&\int_{\ell_{\d + \m}(u^-)}d\z_1\rho(\z, \z_1)\chi^{\sf}_{\d + \m}(\z_1, u^+)\\
&\cdot\frac{1}{4\pi i} \int_{\ell_{\d}(u^-)}d\z_3\rho(\z_1, \z_3)\chi^{\sf}_{\d}(\z_3, u^+)\chi^{\sf}_{2\m}(\z_3, u^+).
\end{align}
We need to push the last ray $\ell_{\d}(u^-)$ in the residue term to $\ell_{\d + 2\m}(u^-)$. In doing so however we will cross the integration ray $\ell_{\d + \m}(u^-)$ in the counterclockwise direction:
\begin{center}
\centerline{
\xymatrix{   &   &   &   &*=0{}\ar@{~>}[dllll]!U|{\ell_{\m}^+}\ar@{-}[ddllll]!U|{\ell_{\d}^-}\ar@{--}[dddlll]!L|>{\z_1\in\ell_{\d+\m}^{\pm}}\ar@{--}[ddddlll]!U|>>>>>>{\ell_{\d+2\m}^{-}}\ar[ddddll]!U|{\ell_{\m}^-}\ar@{~>}[ddddl]!U|{\ell_{\d}^+}\\
                 &   &   &   &           \\
                 *=0{}\ar@/_1pc/[ddr]&   &   &   &           \\
                 &   &   &   &           \\
                 &   *=0{}&   &   &}}
\end{center}
So we rewrite the whole integral as
\begin{align*}
-2\frac{2\delta}{4\pi i}&\int_{\ell_{\d + \m}(u^-)}d\z_1\rho(\z, \z_1)\chi^{\sf}_{\d + \m}(\z_1, u^+)\\
&\cdot\frac{1}{4\pi i} \int_{\ell_{\d + 2\m}(u^-)}d\z_3\rho(\z_1, \z_3)\chi^{\sf}_{\d + 2\m}(\z_3, u^+)
\end{align*}
plus a residue term
\begin{align}\label{top1}
\nonumber -2\frac{2\delta}{4\pi i}&\int_{\ell_{\d + \m}(u^-)}d\z_1\rho(\z, \z_1)\chi^{\sf}_{\d + \m}(\z_1, u^+)\chi^{\sf}_{\d + 2\m}(\z_1, u^+)\\
&= -2\frac{2\delta}{4\pi i}\int_{\ell_{2\d + 3\m}(u^-)}d\z_1\rho(\z, \z_1)\chi^{\sf}_{2\d + 3\m}(\z_1, u^+).
\end{align}
Thus we see the first top degree integral appear. Its cancellation requires a contribution of $2$ to $\Omega(2\d + 3\m; u^-)$. Going back to the integral \eqref{someIntegral}, this can be rewritten as 
\begin{align*} 
2\frac{2\delta}{4\pi i}&\int_{\ell_{2\d + \m}(u^-)}d\z_1\rho(\z, \z_1)\chi^{\sf}_{2\d + \m}(\z_1, u^+)\\
\cdot&\frac{1}{4\pi i}\int_{\ell_{\m}(u^-)}d\z_4\rho(\z_1, \z_4)\chi^{\sf}_{2\m}(\z_4, u^+)
\end{align*}
plus the residue term
\begin{align}\label{top2}
\nonumber 2 &\frac{2\delta}{4\pi i}\int_{\ell_{2\d + \m}(u^-)}d\z_1\rho(\z, \z_1)\chi^{\sf}_{2\d + \m}(\z_1, u^+)\chi^{\sf}_{2\m}(\z_1, u^+)\\
&= 2 \frac{2\delta}{4\pi i}\int_{\ell_{2\d + 3\m}(u^-)}d\z_1\rho(\z, \z_1)\chi^{\sf}_{2\d + 3\m}(\z_1, u^+). 
\end{align}
This is the only other top degree integral arising from the present diagram. Summing up the two top degree contributions \eqref{top1} and \eqref{top2} then we find that the present diagram gives no contribution to $\Om(2\d + 3\m; u^-)$: we get total contribution $+2 - 2 =0$. A very similar analysis can be performed on the GMN diagram obtained from the other possible choice of a root,
\begin{center}
\centerline{
\xymatrix{\d & \ar[l] m &*+[F]{\d}\ar[l]\ar[r] & 2\m}
}
\end{center}
This again shows that the diagram gives vanishing contribution to $\Om(2\d + 3\m; u^-)$. Let us compare this to the situation in Joyce-Song theory. For this we need to sum over all partitions and $\{1,2,3,4\}$-labelled trees which yield the same unoriented $\Gamma$-labelled tree; this lengthy calculation can be summarized as 
\begin{center}
\centerline{
\xymatrix{\d \ar[r]& \m & \ar[l]\d & \ar[l]2\m\,\,\sim -\frac{1}{2},\,\,\,\,\,\,\,\,\,\d \ar[r]& \m & \ar[l]\d\ar[r] & 2\m\,\,\sim \frac{1}{2}}}
\end{center}
while the contribution of each of the other possible orientations vanishes. We verify once again that the sum over all JS diagrams with the same underlying $\Gamma$-labelled tree matches the same quantity in GMN theory, although the weight of each single \emph{orientation} is very different in the two theories (i.e. in the present example, cancellation happens in a very different way). The same happens with the other distinguished $\Gamma$-labelling. Indeed, we can check that each of the GMN diagrams   
\begin{center}
\centerline{
\xymatrix{*+[F]{\d} \ar[r] & 2\m \ar[r] & \d \ar[r]& \m & & \d & \ar[l] 2\m &*+[F]{\d}\ar[l]\ar[r] & \m}
}
\end{center}
requires a contribution of $-4$ units to the index $\Om(2\d +3\m; u^-)$ for its cancellation. Similarly, the sum over all possible partitions and orientations in JS theory equals $-8$, although in a rather different way: one can show that each single orientation of the diagram 
\begin{center}
\centerline{
\xymatrix{\d \ar@{-}[r]& 2\m \ar@{-}[r] & \d \ar@{-}[r] & \m}}
\end{center}
gives the same contribution (i.e. $-1$) to $\Om(2\d +3\m; u^-)$. In the rest of this subsection we concentrate on the diagram
\begin{center}
\centerline{
\xymatrix{                    &                       &                                                & \m \\
              \d \ar@{-}[r]  & \m \ar@{-}[r]   &   \d\ar@{-}[dr]\ar@{-}[ur]                        &       \\
                                  &                       &                                                & \m }
}
\end{center}
In GMN theory we can frame it in two ways, both with $\Z/2$ symmetry, 
\begin{center}
\centerline{
\xymatrix{                    &                       &                                                & \m   &       &            &                     &  \m   &     \\
              *+[F]{\d} \ar[r]         & \m \ar[r]          &   \d\ar[dr]\ar[ur]              &       & \d   &  \ar[l]\m& *+[F]{\d}\ar[l]\ar[ur]\ar[dr]&        &     \\
                                  &                       &                                                & \m   &       &            &                     &  \m  &}
}
\end{center}
For the first choice we have $\W = (-1)^5\frac{1}{2}\d\bra \d, \m\ket \bra \m, \d\ket \bra \d, \m\ket^2 = 8\d$, and the diagram gives an instanton correction, at strong coupling,
\begin{align*}
4\frac{2\delta}{4\pi i}&\int_{\ell_{\d}(u^+)}d\z_1\rho(\z, \z_1)\chi^{\sf}_{\d}(\z_1, u^+)\frac{1}{4\pi i} \int_{\ell_{\m}(u^+)}d\z_2\rho(\z_1, \z_2)\chi^{\sf}_{\m}(\z_2, u^+)\\
&\cdot\frac{1}{4\pi i} \int_{\ell_{\d}(u^+)}d\z_3\rho(\z_2, \z_3)\chi^{\sf}_{\d}(\z_3, u^+)\left(\frac{1}{4\pi i}\int_{\ell_{\m}(u^+)}d\z_4\rho(\z_3, \z_4)\chi^{\sf}_{2\m}(\z_4, u^+)\right)^2. 
\end{align*}
As usual pushing $\ell_{\d}(u^+)$ to $\ell_{\d}(u^-)$, then $\ell_{\m}(u^+)$ to $\ell_{\m}(u^-)$ splits the integral as 
\begin{align*}
4\frac{2\delta}{4\pi i}&\int_{\ell_{\d}(u^-)}d\z_1\rho(\z, \z_1)\chi^{\sf}_{\d}(\z_1, u^+)\frac{1}{4\pi i} \int_{\ell_{\m}(u^-)}d\z_2\rho(\z_1, \z_2)\chi^{\sf}_{\m}(\z_2, u^+)\\
&\cdot\frac{1}{4\pi i} \int_{\ell_{\d}(u^+)}d\z_3\rho(\z_2, \z_3)\chi^{\sf}_{\d}(\z_3, u^+)\left(\frac{1}{4\pi i}\int_{\ell_{\m}(u^+)}d\z_4\rho(\z_3, \z_4)\chi^{\sf}_{2\m}(\z_4, u^+)\right)^2. 
\end{align*}
plus a residue 
\begin{align*}
4\frac{2\delta}{4\pi i}&\int_{\ell_{\d}(u^-)}d\z_1\rho(\z, \z_1)\chi^{\sf}_{\d + \m}(\z_1, u^+)\\
&\cdot\frac{1}{4\pi i} \int_{\ell_{\d}(u^+)}d\z_3\rho(\z_1, \z_3)\chi^{\sf}_{\d}(\z_3, u^+)\left(\frac{1}{4\pi i}\int_{\ell_{\m}(u^+)}d\z_4\rho(\z_3, \z_4)\chi^{\sf}_{2\m}(\z_4, u^+)\right)^2. 
\end{align*}
It is easy to check that the first integral dies out: pushing rays around with the residue theorem will never produce a top degree correction. On the other hand, by pushing $\ell_{\d}(u^-)$ to $\ell_{\d + \m}(u^-)$, then $\ell_{\d}(u^+)$ to $\ell_{\d}(u^-)$, the residue decays to the integrals
\begin{align*}
4\frac{2\delta}{4\pi i}&\int_{\ell_{\d + \m}(u^-)}d\z_1\rho(\z, \z_1)\chi^{\sf}_{\d + \m}(\z_1, u^+)\\
&\cdot\frac{1}{4\pi i} \int_{\ell_{\d}(u^-)}d\z_3\rho(\z_1, \z_3)\chi^{\sf}_{\d}(\z_3, u^+)\left(\frac{1}{4\pi i}\int_{\ell_{\m}(u^+)}d\z_4\rho(\z_3, \z_4)\chi^{\sf}_{2\m}(\z_4, u^+)\right)^2 
\end{align*}
and
\begin{equation*}
-4\frac{2\delta}{4\pi i}\int_{\ell_{\d + \m}(u^-)}d\z_1\rho(\z, \z_1)\chi^{\sf}_{2\d + \m}(\z_1, u^+)\left(\frac{1}{4\pi i}\int_{\ell_{\m}(u^+)}d\z_4\rho(\z_1, \z_4)\chi^{\sf}_{2\m}(\z_4, u^+)\right)^2.
\end{equation*}
Pushing $\ell_{\m}(u^+)$ to $\ell_{\m}(u^-)$ in the first integral gives a residue
\begin{equation*}
4\frac{2\delta}{4\pi i}\int_{\ell_{\d + \m}(u^-)}d\z_1\rho(\z, \z_1)\chi^{\sf}_{\d + \m}(\z_1, u^+)\frac{1}{4\pi i} \int_{\ell_{\d}(u^-)}d\z_3\rho(\z_1, \z_3)\chi^{\sf}_{\d + 2\m}(\z_3, u^+),
\end{equation*}
and finally pushing $\ell_{\d}(u^-)$ to $\ell_{\d + 2\m}(u^-)$ crosses $\ell_{\d + \m}(u^-)$ in the counterclockwise direction, giving a top order contribution of $-4$ to $\Omega(u^-)$. It is even easier to check that the second integral contributes instead $+4$ to $\Omega(u^-)$, proving that the total contribution of the present framed diagram vanishes. Similar computations show that the other choice of framing also gives a vanishing contribution. In Joyce-Song theory we get the same vanishing, but in a very different way: indeed the only oriented diagrams which carry a JS contribution are
\begin{center}
\centerline{
\xymatrix{                    &                       &                                                & \m   &       &            &                     &  \ar[dl]\m   &     \\
              \d \ar[r]         & \m           &   \ar[l]\d\ar[dr]\ar[ur]                        &\sim 1       & \d   &  \ar[l]\m&    \d\ar[l]\ar[dr]& \sim -1       &     \\
                                  &                       &                                                & \m   &       &            &                     &  \m  &}
}
\end{center}
\begin{center}
\centerline{
\xymatrix{                    &                       &                                                & \ar[dl]\m   &       &            &                     &  \ar[dl]\m   &     \\
              \d         &\ar[l] \m \ar[r]          &   \d\ar[dr]              &  \sim -\frac{1}{2}     & \d \ar[r]  &  \m&   \d\ar[l]&  \sim \frac{1}{2}      &     \\
                                  &                       &                                                & \m   &       &            &                     &  \ar[ul]\m  &}
}
\end{center}
\subsection{A purely combinatorial formulation}\label{combiSection} As the reader probably guessed, it is possible to make the above computations with GMN diagrams completely systematic, giving a graphical procedure to evaluate the contribution of each diagram. This turns Conjecture \ref{mainConj} for $N_f = 0$ Seiberg-Witten into a purely combinatorial statement, which nevertheless we do not know how to prove at the moment. There are a number of open questions with a similar combinatorial flavour which seem very relevant to wall-crossing theory, see for example the conjectures of Manschot, Pioline and Sen in \cite{mps}.

We now describe a process that computes the contribution of a GMN diagram by a finite sequence of decays into shorter diagrams. Iteratively we denote by $\T$ one of the diagrams produced in the process. Initially $\T$ is a GMN diagram $\T^0$ at strong coupling. Its vertices are labelled by classes $\gamma_i \in \Gamma$, which are in one to one correspondence with BPS integration rays  $\ell_{\gamma_i}(u^+)$ in the underlying iterated integral $\W_{T^0} \G_{T^0}$. So for simplicity of notation, at the initial step of the process, we think of the vertices as labelled by $\gamma^+_i$. Consider first the following operation:
\begin{enumerate}
\item[$\bullet$] A vertex $\gamma^+_i$ of $\T$ which has minimum distance to $\gamma_{\T}$ transforms to $\gamma^-_i$.
\end{enumerate}
This represents graphically the operation of pushing the corresponding BPS integration ray from $\ell_{\gamma_i}(u^+)$ to $\ell_{\gamma_i}(u^-)$. At the very first step $\gamma^+_i$ is just the root $\gamma^+_{\T^0}$, and as we have seen we can replace this with $\gamma^-_{\T^0}$ freely. At a general step we are focusing on a subtree of $\T$ of the form 
\begin{center}
\centerline{
\xymatrix{
                                     &                            &     \cdots \ominus \cdots                      &        \\
\cdots\ominus \cdots \ar[r] & \eta^- \ar[r]\ar[ur]\ar[dr] & \gamma^+_i \ar[r] & \cdots \oplus \cdots\\
                                     &                            &     \cdots \oplus \cdots                      &}}
\end{center}
where we have denoted by $\ominus$ ($\oplus$) a collection of charges of the form $\xi^-$ (respectively $\xi^+$). The charge $\gamma^+_i$ trasforms to $\gamma^-_i$, giving a new diagram $\T^-$ where the above segment is just replaced by 
\begin{center}
\centerline{
\xymatrix{
                                     &                            &     \cdots \ominus \cdots                      &        \\
\cdots\ominus \cdots \ar[r] & \eta^- \ar[r]\ar[ur]\ar[dr] & \gamma^-_i \ar[r] & \cdots \oplus \cdots\\
                                     &                            &     \cdots \oplus \cdots                      &}}
\end{center}
However transforming $\gamma^+_i$ to $\gamma^-_i$ entails pushing $\ell_{\gamma_i}(u^+)$ to $\ell_{\gamma_i}(u^-)$. In doing so we may happen to cross the BPS ray $\ell_{\eta}(u^-)$. In this case we say that $\eta^-$  and $\gamma^+_i$ interact.\\ 
\begin{rmk} Notice that in performing this operation $\gamma^+_i$ can never interact with vertices which lie farther from the root (i.e. one of the cloud of $\oplus$ to its right), because these are still labelled by original BPS charges $\gamma^+_j$, which lie at the boundary of the cone spanned by $\ell_{\d}(u^+), \ell_{\m}(u^+)$, while interaction (that is, crossing over of BPS rays) can only happen in the interior cone spanned by $\ell_{\d}(u^-), \ell_{\m}(u^-)$.
\end{rmk}

\noindent The residue theorem shows that there is a further decay product, a diagram $\T_{res}$ where the original segment is replaced by
\begin{equation*}
\pm\left(\vcenter{\xymatrix{    &                         \cdots \ominus \cdots                                &                            \\
\cdots\ominus \cdots \ar[r] & \eta^- + \gamma^*_i  \ar[r]\ar[u]\ar[d]                             & \cdots \oplus \cdots\\
                                     &                                 \cdots \oplus \cdots                      &}}\right)
\end{equation*}
We pick the sign $\pm$ according to whether $\ell_{\gamma_i}(u^-)$ crosses $\ell_{\eta}(u^-)$ in the counterclockwise, respectively clockwise direction. We see a new crucial piece of notation appearing here: a vertex of the form $\eta^- + \xi^*$ corresponds to an ``unbalanced" integral
\begin{equation*}
\cdots \frac{1}{4\pi i}\int_{\ell_{\eta}(u^-)}d\tau\rho(\sigma, \tau)\Xsf_{\eta + \xi}(\tau, u^+) \cdots
\end{equation*} 
i.e. one in which the BPS integration ray $\ell_{\eta}(u^-)$ disagrees with the charge of the (piece of) integrand $\Xsf_{\eta + \xi}(\tau, u^+)$. So we come to the second operation: 
\begin{enumerate}
\item[$\bullet$] A vertex of the form $\eta^- + \xi^*$ transforms to $(\eta + \xi)^-$.
\end{enumerate}
This represents graphically the operation of pushing the BPS integration ray from $\ell_{\eta}(u^-)$ to $\ell_{\eta + \xi}(u^-)$. At a general step we are focusing on a subtree of $\T$ of the form 
\begin{center}
\centerline{
\xymatrix{
\cdots\ominus \cdots \ar[dr]                                    &                            &                              &   \beta^{-}   \\
                                                                & \alpha^- \ar[r]           &  \eta^- + \xi^* \ar[ur]\ar[dr]\ar[r]  & \cdots \ominus \cdots                               \\
\cdots\ominus \cdots \ar[ur]                                     &                            &                               &   \cdots \oplus \cdots  }}
\end{center}
The vertex $\eta^- + \xi^*$ transforms to $(\eta + \xi)^-$, replacing $\T$ with the diagram $\T^-$ given by
\begin{center}
\centerline{
\xymatrix{
\cdots\ominus \cdots \ar[dr]                                    &                            &                              &    \beta^-  \\
                                                                & \alpha^- \ar[r]           &  (\eta + \xi)^- \ar[ur]\ar[dr]\ar[r]  & \cdots \ominus \cdots                               \\
\cdots\ominus \cdots \ar[ur]                                     &                            &                               &   \cdots \oplus \cdots  }}
\end{center}
But while pushing $\ell_{\eta}(u^-)$ to $\ell_{\eta + \xi}(u^-)$ we may happen to cross one or both of the integration rays $\ell_{\alpha}(u^-)$ and $\ell_{\beta}(u^-)$ (this is of course a schematic picture; in general we may cross more integration rays, both incoming and outgoing at $\eta^- + \xi^*$). If so applying the Fubini and residue theorems shows that $\eta^- + \xi^*$ and $\alpha^-$ (and possibly, $\beta^-$) interact, producing a residue diagram $\T_{res}$
\begin{equation*}
\pm\left(\vcenter{\xymatrix{
\cdots\ominus \cdots \ar[dr]                                    &                                                          &   \beta^-   \\
                                                                          & \alpha^- + (\eta + \xi)^* \ar[ur]\ar[dr]\ar[r]       & \cdots \ominus \cdots                                \\
\cdots\ominus \cdots \ar[ur]                                    &                                                          &   \cdots \oplus \cdots  }}\right)
\end{equation*}
and possibly
\begin{equation*}
\pm\left(\vcenter{\xymatrix{
\cdots\ominus \cdots \ar[dr]                                    &                                                          &    \beta^- + (\eta + \xi)^*  \\
                                                                          & \alpha^-  \ar[ur]\ar[dr]\ar[r]       &    \cdots \ominus \cdots                            \\
\cdots\ominus \cdots \ar[ur]                                    &                                                          &   \cdots \oplus \cdots  }}\right)
\end{equation*}
\noindent (again, we just illutrate the situation with two charges $\alpha^-, \beta^-$; there may be more entirely similar diagrams, coming from incoming and outgoing edges at $\eta^- + \xi^*$). We apply recursively the two operations described above to the initial GMN diagram at strong coupling. At each step of the process we will have in general many decays of diagrams, each of the form $\T \to \T^- \pm \T_{res}$. After a finite number of steps we are left with a finite set of \emph{signed} diagrams. The signed diagrams with more than just a vertex (i.e. those which are not singletons) encode higher order corrections at weak coupling, which we may ignore. We are only interested in the finite set of singleton diagrams $\{\eps_i \T_i\}$. The total contribution of the original GMN diagram $\T^0$ to $\Omega(p \delta + q \m; u^-)$ is then given by
\begin{equation*}
\Omega^-_{\T^0} = -\frac{1}{p}\W_{\T^0}\sum_{i}\eps_i. 
\end{equation*}
As a nontrivial example let us check again using this procedure that the contribution of the following diagram vanishes:
\begin{center}
\centerline{
\xymatrix{                    &                       &                                                & \m^+   &\\
              \d^-\ar[r]         & \m^+ \ar[r]          &   \d^+\ar[dr]\ar[ur]  &            &\\
                                  &                       &                                                & \m^+   &}
}
\end{center}
The first decay is 
\begin{center}
\centerline{
\xymatrix{                    &                       &                                                & \m^+&   &       &                             &  \m^+   &     \\
              \d^- \ar[r]      & \m^- \ar[r]          &   \d^+\ar[dr]\ar[ur]              &       & + &(\d^- + \m^*) \ar[r]  &  \d^+\ar[ur]\ar[dr]&        &     \\
                                  &                       &                                                & \m^+ &  &       &                             &  \m^+  &}
}
\end{center}
The first diagram dies out, while the latter further decays to
\begin{equation*}
\xymatrix{                                 &                                     &  \m^+  &    &                    &  \m^+ \\
                 (\d + \m)^- \ar[r]     &  \d^-\ar[ur]\ar[dr]           &            & -  &  (\d + \m)^- + \d^* \ar[ur]\ar[dr]  \\
                                               &                                     &  \m^+  &    &                    &  \m^+ }\,\,\,\,\,\,\,\,\,\,\,\,\,\,\,\,\,\,\,\,\,
\end{equation*}
In turn the first of these diagrams decays to 
\begin{equation*}
\xymatrix{
                                               &                                     &  \m^-  &    &                     &\\
                 (\d + \m)^- \ar[r]     &  \d^-\ar[ur]\ar[dr]           &           & +  &  (\d + \m)^- \ar[r] & (\d^- + 2\m^*)\,\,\,\,\,\,\,\,\,\,\,\,\\
                                               &                                     &  \m^-  &    &                     &}
\end{equation*}
which picks up a top degree contribution from the last term, 
\begin{equation*}
\xymatrix{
(\d + \m)^- \ar[r] & (\d^- + 2\m)^- & + & (2\d + 3\m)^-\,\,\,\,\,\,\,\,\,\,\,\,\,\,\,\,\,\,\,\,\,\,\,\,\,\,\,\,\,\,\,\,\,\,\,\,\,\,\,\,\,\,\,\,\,\,\,\,\,\,\,\,\,\,\,
}
\end{equation*}
On the other hand we have the decay into 
\begin{equation*}
\xymatrix{                          &  \m^- & \\
               - (2\d + \m)^- \ar[ur]\ar[dr] & & - & (2\d + 3\m)^-&\,\,\,\,\,\,\,\,\,\,\,\,\,\,\,\,\,\,\,\,\,\\
                                       &  \m^- &}
\end{equation*}
So altogether in this case we have $\sum_i \eps_i = +1 - 1 = 0$.\\
\noindent{\textbf{Remark.}} We emphasize that when we perform the operation $\eta^- + \xi^* \to (\eta + \xi)^-$ at a vertex, interactions will in general occur with neighbouring vertices which may be both closer and farther from the root. As a simple example the diagram\begin{center}
\centerline{
\xymatrix{              &                  &                       &                                                & \d^+ \ar[r]  & \m^+ &\\
              \d^-\ar[r]  &  \m^+\ar[r]      & 2\d^+ \ar[r]          &   \m^+\ar[dr]\ar[ur]  &            &\\
                               &                &                       &                                                & \d^+  \ar[r] & 3\m^+ &}
}
\end{center}
contains among its decay diagrams 
\begin{center}
\centerline{ 
\xymatrix{                  &                                    &        (\d+\m)^-         &\\
(\d + \m)^- \ar[r]         & (2\d + \m)^-  \ar[ur]\ar[dr]   &                  &\\
                                   &                                    &        \d^- + 3\m^*     &}}
\end{center}
and so
\begin{center}
\centerline{ 
\xymatrix{ 
(\d + \m)^- \ar[r]         & (2\d + \m)^- + (\d + 3\m)^*  \ar[r]   &  (\d+\m)^-             &}}
\end{center}
Using the Fubini and residue theorems we see that the middle term interacts with both neighbours.
\subsection{$N_f > 0$ features} There are two features of the GMN setup which we have ignored so far, but which become relevant when $N_f > 0$: firstly, the local system $\hat{\Gamma}$ becomes larger than $\Gamma$; and secondly, the form $\bra-, -\ket$ on the sublattice of $\hat{\Gamma}$ spanned by BPS charges is no longer even. According to \cite{gmn} the definitions of the semiflat Darboux coordinates need to be modified, including a choice of \emph{quadratic refinement} $\sigma$ on $\hat{\Gamma}$. This is a locally defined function which satisfies $\sigma(\gamma_1 + \gamma_2) = (-1)^{\bra \gamma_1, \gamma_2\ket}\sigma(\gamma_1)\sigma(\gamma_2)$. The essential point for us is that the definition \eqref{fCoeff} of $f^\gamma$ must be modified to
\begin{equation*}
f^{\gamma} = \sum_{n > 0, \gamma = n\gamma'} \frac{\sigma(\gamma')^n}{n}\Omega(\gamma'; u)\gamma',
\end{equation*}  
We need to keep track of this in the definition \eqref{Wweight} of the weight $\W_{\T}$. Notice that in the $N_f > 0$ case we have $f^{\gamma}$ differs from $\Omega(\gamma)$ even for primitive classes (by the factor $\sigma(\gamma)$). The quadratic refinement $\sigma$ for GMN diagrams in the analogue of the total sign $\prod(-1)^{\bra \alpha_i, \alpha_j\ket}$ in Joyce-Song theory. In practice, as we will see, this means that to get the right answer from JS computations for a diagram $\T'$ labelled by $\alpha_1, \ldots, \alpha_n$ we must use the usual $\dt$ invariants in JS (i.e. untwisted by $\sigma$), and then multiply by the factor $\left(\prod (-1)^{\bra \alpha_i, \alpha_j\ket}\right)^{-1}\left(\prod \sigma(\alpha_k)\right)$.

Finally, since the form $\bra -, -\ket$ is degenerate on $\hat{\Gamma}$, the equality \eqref{wallcross} is no longer a consequence of the continuity condition \eqref{continuity}, so we \emph{assume} \eqref{wallcross} as the right wall-crossing constraint.
\subsection{Seiberg-Witten with $N_f = 1$} Recall we have vanishing cycles $\gamma_1, \gamma_2, \gamma_3$, with the single relation in $\Gamma$
\begin{equation*}
\gamma_1 + \gamma_2 + \gamma_3 = 0
\end{equation*}
and intersection products 
\begin{equation*}
\bra \gamma_1, \gamma_2\ket = \bra \gamma_2, \gamma_3\ket = \bra \gamma_3, \gamma_1\ket = 1. 
\end{equation*}
Initially the BPS rays are given by
\begin{center}
\centerline{
\xymatrix{   &   &   &   &*=0{}\ar@{~>}[dllll]!U|{\ell^+_{\gamma_2}}\ar[ddllll]!U|{\ell^-_{\gamma_1}}\ar@{-->}[dddlll]!RRRUUU|>>>>>>>>>>{\ell^{\pm}_{-\gamma_3} = \ell^{\pm}_{\gamma_1 + \gamma_2}}\ar[ddddll]!U|{\ell^-_{\gamma_2}}\ar@{~>}[ddddl]!U|{\ell^+_{\gamma_1}}         \\
                 &   &   &   &           \\
                 &   &   &   &           \\
                 &   &   &   &           \\
                 &   &   &   &}}
\end{center} 
The wall-crossing formula for $N_f = 1$ is a refinement of the $N_f = 0$ case we have seen above. We highlight how this refinement happens in a specific example, namely $\Omega(\gamma_1 - \gamma_3 + \gamma_2, u^-) = -2$, starting from the GMN side. Notice that the ``restriction to effective integrals" from the $N_f=0$ case still holds, when applied to the computation of $\Omega(a\gamma_1 + b(- \gamma_3) + c\gamma_2, u^-)$ with $a, b, c$ positive. Accordingly, we will consider the diagrams
\begin{center}
\centerline{
\xymatrix{*+[F]{\gamma_1} \ar[r] & \gamma_2 \ar[r] & -\gamma_3 &   & *+[F]{\gamma_1} \ar[r] &  -\gamma_3 \ar[r] & \gamma_2\\
                   &          & \gamma_2  &   *+[F]{\gamma_1} \ar[l]\ar[r] &  -\gamma_3 &   &  }}
\end{center} 
We will write $\sigma = \sigma(\gamma_1 - \gamma_3 + \gamma_2)$. For the first diagram we have 
\begin{align*}
\W &= (-1)^3\sigma(\gamma_1)\gamma_1 \bra \gamma_1, \sigma(\gamma_2)\gamma_2\ket \bra \gamma_2, -\sigma(-\gamma_3)\gamma_3\ket\\ 
&= (-1)^4 (-1)^{\bra \gamma_1, \gamma_2 \ket} (-1)^{\bra \gamma_1 + \gamma_2, -\gamma_3 \ket}\sigma \gamma_1\\
&= - \sigma \gamma_1. 
\end{align*}
and so an integral
\begin{align*}
-\sigma\frac{\gamma_1}{4\pi i}&\int_{\ell_{\gamma_1}(u^+)}d\z_1\rho(\z, \z_1)\Xsf_{\gamma_1}(\z_1, u^+)\frac{1}{4\pi i}\int_{\ell_{\gamma_2}(u^+)}d\z_2\rho(\z_1, \z_2)\Xsf_{\gamma_2}(\z_2, u^+)\\
&\frac{1}{4\pi i}\int_{\ell_{-\gamma_3}(u^{\pm})}d\z_3\rho(\z_2, \z_3)\Xsf_{-\gamma_3}(\z_3, u^+).
\end{align*}
Pushing $\ell_{\gamma_1}(u^+)$ to $\ell_{\gamma_1}(u^-)$ and by Fubini we can rewrite this as 
\begin{align*}
-\sigma\frac{\gamma_1}{4\pi i}&\int_{\ell_{\gamma_1}(u^-)}d\z_1\rho(\z, \z_1)\Xsf_{\gamma_1}(\z_1, u^+)\frac{1}{4\pi i}\int_{\ell_{-\gamma_3}(u^{\pm})}d\z_3\Xsf_{-\gamma_3}(\z_3, u^+)\\
&\frac{1}{4\pi i}\int_{\ell_{\gamma_2}(u^+)}d\z_2\rho(\z_2, \z_3)\rho(\z_1, \z_2)\Xsf_{\gamma_2}(\z_2, u^+),
\end{align*}
As usual we want to replace $\ell_{\gamma_2}(u^+)$ with $\ell_{\gamma_2}(u^-)$. By the residue theorem we split the above integral as 
\begin{align*}
-\sigma\frac{\gamma_1}{4\pi i}&\int_{\ell_{\gamma_1}(u^-)}d\z_1\rho(\z, \z_1)\Xsf_{\gamma_1}(\z_1, u^+)\frac{1}{4\pi i}\int_{\ell_{-\gamma_3}(u^{\pm})}d\z_3\rho(\z_2, \z_3)\Xsf_{-\gamma_3}(\z_3, u^+)\\
&\frac{1}{4\pi i}\int_{\ell_{\gamma_2}(u^-)}d\z_2\rho(\z_1, \z_2)\Xsf_{\gamma_2}(\z_2, u^+)
\end{align*}
plus the residue terms
\begin{equation}\label{Nf1_I}
-\sigma\frac{\gamma_1}{4\pi i}\int_{\ell_{\gamma_1}(u^-)}d\z_1\rho(\z, \z_1)\Xsf_{\gamma_1 + \gamma_2}(\z_1, u^+)\frac{1}{4\pi i}\int_{\ell_{-\gamma_3}(u^{\pm})}d\z_3\rho(\z_1, \z_3)\Xsf_{-\gamma_3}(\z_3, u^+)
\end{equation}
and
\begin{equation}\label{Nf1_II}
-\sigma\frac{\gamma_1}{4\pi i}\int_{\ell_{\gamma_1}(u^-)}d\z_1\rho(\z, \z_1)\Xsf_{\gamma_1}(\z_1, u^+)\frac{1}{4\pi i}\int_{\ell_{-\gamma_3}(u^{\pm})}d\z_3\rho(\z_1, \z_3)\Xsf_{-\gamma_3 + \gamma_2}(\z_3, u^+)
\end{equation}
(see picture below).
\begin{center}
\centerline{
\xymatrix{   &   &   &   &*=0{}\ar@{~}[dllll]!U|{\ell^+_{\gamma_2}}\ar@{-}[ddllll]!U|>>>>>>>>>>>>>>{\z_1\in\ell^-_{\gamma_1}}\ar@{--}[dddlll]!RRRUUU|>{\z_3\in\ell^{\pm}_{-\gamma_3}}\ar@{-}[ddddll]!U|{\ell^-_{\gamma_2}}\ar@{~>}[ddddl]!U|{\ell^+_{\gamma_1}}         \\
                 *=0{}\ar@/_2pc/@{->}[dddrr]&   &   &   &           \\
                 &   &   &   &           \\
                 &   &   &   &           \\
                 &   &  *=0{}&   &}}
\end{center} 
The term \eqref{Nf1_II} is just the same as
\begin{equation*}
-\sigma\frac{\gamma_1}{4\pi i}\int_{\ell_{\gamma_1}(u^-)}d\z_1\rho(\z, \z_1)\Xsf_{\gamma_1}(\z_1, u^+)\frac{1}{4\pi i}\int_{\ell_{-\gamma_3 + \gamma_2}(u^{-})}d\z_3\rho(\z_1, \z_3)\Xsf_{-\gamma_3 + \gamma_2}(\z_3, u^+)
\end{equation*}
and so does not give a top order correction. However the integral \eqref{Nf1_I} looks different: in the usual approach, we would need to push the first integration ray $\ell_{\gamma_1}(u^-)$ to $\ell_{\gamma_1 + \gamma_2}(u^-)$, but in the present situation $\ell_{\gamma_1 + \gamma_2}(u^-) = \ell_{-\gamma_3}(u^{\pm})$, which coincides with the second integration ray, so the resulting integral is not well defined! We will see in a moment how this new difficulty is resolved. 

Passing to the diagram
\begin{center}
\centerline{
\xymatrix{*+[F]{\gamma_1} \ar[r] &  -\gamma_3 \ar[r] & \gamma_2}}
\end{center} 
we have
\begin{align*}
\W &= (-1)^3\sigma(\gamma_1)\gamma_1 \bra \gamma_1, -\sigma(-\gamma_3)\gamma_3\ket \bra -\gamma_3, \sigma(\gamma_2)\gamma_2\ket\\
&= (-1)^3 (-1)^{\bra \gamma_1, -\gamma_3\ket} (-1)^{\bra \gamma_1 - \gamma_3, \gamma_2\ket}\sigma \gamma_1\\
&= \sigma \gamma_1
\end{align*}
and by a first application of the Fubini and residue theorems one checks that the only top degree contribution can come from the integral
\begin{equation*}
\sigma\frac{\gamma_1}{4\pi i} \int_{\ell_{\gamma_1 - \gamma_3}(u^-)}d\z_2\rho(\z, \z_2)\Xsf_{\gamma_1 - \gamma_3}(\z_2, u^+)\frac{1}{4\pi i}\int_{\ell_{\gamma_2}(u^+)}d\z_3\rho(\z_2, \z_3)\Xsf_{\gamma_2}(\z_3, u^+).
\end{equation*}
This indeed gives a residue term
\begin{equation*}
\sigma\frac{\gamma_1}{4\pi i} \int_{\ell_{\gamma_1 - \gamma_3 + \gamma_2}(u^-)}d\z_2\rho(\z, \z_2)\Xsf_{\gamma_1 - \gamma_3 + \gamma_2}(\z_2, u^+)
\end{equation*}
which requires a contribution of $-\sigma$ to $f^{\gamma_1 - \gamma_3 + \gamma_2}(u^-)$ for cancellation. Finally, we consider
\begin{center}
\centerline{
\xymatrix{\gamma_2  &   *+[F]{\gamma_1} \ar[l]\ar[r] &  -\gamma_3}}
\end{center} 
with
\begin{align*}
\W &=  (-1)^3\sigma(\gamma_1)\gamma_1 \bra \gamma_1, \sigma(\gamma_2)\gamma_2\ket \bra \gamma_1, -\sigma(-\gamma_3)\gamma_3\ket\\
&= (-1)^3 (-1)^{\bra \gamma_1, \gamma_2 \ket} (-1)^{\bra \gamma_1 + \gamma_2, -\gamma_3 \ket}\sigma\gamma_1\\
&= \sigma \gamma_1 
\end{align*}
and a corresponding integral
\begin{align*}
\sigma\frac{\gamma_1}{4\pi i}&\int_{\ell_{\gamma_1}(u^+)}d\z_1\rho(\z, \z_1)\Xsf_{\gamma_1}(\z_1, u^+)\frac{1}{4\pi i}\int_{\ell_{\gamma_2}(u^+)}d\z_2\rho(\z_1, \z_2)\Xsf_{\gamma_2}(\z_2, u^+)\\
&\frac{1}{4\pi i}\int_{\ell_{-\gamma_3}(u^{\pm})}d\z_3\rho(\z_1, \z_3)\Xsf_{-\gamma_3}(\z_3, u^+).
\end{align*}
Pushing $\ell_{\gamma_1}(u^+)$ to $\ell_{\gamma_1}(u^-)$ gives a principal term
\begin{align*}
\sigma\frac{\gamma_1}{4\pi i}&\int_{\ell_{\gamma_1}(u^-)}d\z_1\rho(\z, \z_1)\Xsf_{\gamma_1}(\z_1, u^+)\frac{1}{4\pi i}\int_{\ell_{\gamma_2}(u^+)}d\z_2\rho(\z_1, \z_2)\Xsf_{\gamma_2}(\z_2, u^+)\\
&\frac{1}{4\pi i}\int_{\ell_{-\gamma_3}(u^{\pm})}d\z_3\rho(\z_1, \z_3)\Xsf_{-\gamma_3}(\z_3, u^+)
\end{align*}
plus a residue
\begin{equation*}
\sigma\frac{\gamma_1}{4\pi i} \int_{\ell_{\gamma_1 - \gamma_3}(u^-)}d\z_3\rho(\z, \z_3)\Xsf_{\gamma_1 - \gamma_3}(\z_3, u^+)\frac{1}{4\pi i}\int_{\ell_{\gamma_2}(u^+)}d\z_2\rho(\z_3, \z_3)\Xsf_{\gamma_2}(\z_2, u^+),
\end{equation*}
which requires a contribution of $-\sigma$ to $f^{\gamma_1 - \gamma_3 + \gamma_2}(u^-)$. Finally pushing $\ell_{\gamma_2}(u^+)$ to $\ell_{\gamma_2}(u^-)$ in the principal term gives in turn a ``singular" residue 
\begin{equation}\label{Nf1_III}
\sigma\frac{\gamma_1}{4\pi i}\int_{\ell_{\gamma_1}(u^-)}d\z_1\rho(\z, \z_1)\Xsf_{\gamma_1 + \gamma_2}(\z_1, u^+)\frac{1}{4\pi i}\int_{\ell_{-\gamma_3}(u^{\pm})}d\z_3\rho(\z_1, \z_3)\Xsf_{-\gamma_3}(\z_3, u^+)
\end{equation}
which precisely cancels out the integral \eqref{Nf1_I}. Thus the final result for $f^{\gamma_1 - \gamma_3 + \gamma_2}(u^-)$ is $-2\sigma$, which gives $\Omega(\gamma_1 - \gamma_3 + \gamma_2, u^-) = -2$.\\

\noindent\textbf{Singular integrals and comparison with JS.} It is especially interesting to compare with computations with the JS formula in this case, since as we explained the GMN diagrams 
\begin{center}
\centerline{
\xymatrix{*+[F]{\gamma_1} \ar[r] & \gamma_2 \ar[r] & -\gamma_3 &   & \gamma_2  &   *+[F]{\gamma_1} \ar[l]\ar[r] &  -\gamma_3}}
\end{center} 
do not give a definite numerical numerical contribution by themselves (due to the ``singular integrals" \eqref{Nf1_I}, \eqref{Nf1_III}). We write schematically
\begin{center}
\centerline{
\xymatrix{*+[F]{\gamma_1} \ar[r] & \gamma_2 \ar[r] & -\gamma_3 \sim \textrm{sing}\\
\gamma_2  &   *+[F]{\gamma_1} \ar[l]\ar[r] &  -\gamma_3 \sim -\sigma - \textrm{sing}}}
\end{center} 
where sing is the ``value" of the integral \eqref{Nf1_I}. In GMN theory this remains undetermined, and cancellation is enough to obtain the correct result. In the JS formula the singularity is spread equally between the two diagrams, and each weighs $-\frac{\sigma}{2}$. Indeed the $\S$ symbols are given by
\begin{equation*}
\begin{matrix}
\S(\gamma_1, \gamma_2, -\gamma_3; s, w) = 0,\,\,\,\S(\gamma_1, -\gamma_3, \gamma_2; s, w) = 1,\,\,\,\S(-\gamma_3, \gamma_1, \gamma_2; s, w) = 0,\\
\S(\gamma_2, \gamma_1, -\gamma_3; s, w) = -1,\,\,\,\S(\gamma_2, -\gamma_3, \gamma_1; s, w) = 1,\,\,\,\S(-\gamma_3, \gamma_2, \gamma_1; s, w) = -1.
\end{matrix}
\end{equation*}  
Since $Z(\gamma_1 + \gamma_2) = Z(-\gamma_3)$, the $\U$ symbols are then given by
\begin{equation*}
\begin{matrix}
\U(\gamma_1, \gamma_2, -\gamma_3; s, w) = -\frac{1}{2},\,\,\,\U(\gamma_1, -\gamma_3, \gamma_2; s, w) = 1,\,\,\,\U(-\gamma_3, \gamma_1, \gamma_2; s, w) = -\frac{1}{2},\\
\U(\gamma_2, \gamma_1, -\gamma_3; s, w) = -\frac{1}{2},\,\,\,\U(\gamma_2, -\gamma_3, \gamma_1; s, w) = 1,\,\,\,\U(-\gamma_3, \gamma_2, \gamma_1; s, w) = -\frac{1}{2}.
\end{matrix}
\end{equation*}
Using this we can summarize the JS computations, twisted by 
\begin{equation*}
\left(\prod (-1)^{\bra \alpha_i, \alpha_j\ket}\right)^{-1}\left(\prod \sigma(\alpha_k)\right) = -\sigma
\end{equation*} 
as follows. Each orientation of the diagram
\begin{center}
\centerline{
\xymatrix{\gamma_1 \ar@{-}[r] & \gamma_2 \ar@{-}[r] & -\gamma_3}}
\end{center} 
contributes $-\frac{\sigma}{8}$ to $f^{\gamma_1 - \gamma_3 + \gamma_2}$, and so the diagram contributes $-\frac{\sigma}{2}$. On the other hand we have
\begin{center}
\centerline{
\xymatrix{\gamma_1 \ar[r] & -\gamma_3 \ar[r] & \gamma_2 \sim -\frac{\sigma}{4}&   & \gamma_1 \ar[r] & -\gamma_3 &  \gamma_2 \sim -\frac{3\sigma}{8}\ar[l]\\
              \gamma_1  & -\gamma_3 \ar[l]\ar[r] & \gamma_2 \sim -\frac{\sigma}{8}&   & \gamma_1 &  -\gamma_3  \ar[l] & \gamma_2 \sim -\frac{\sigma}{4}\ar[l]}}
\end{center} 
So the JS contribution of unoriented, labelled diagram 
\begin{center}
\centerline{
\xymatrix{\gamma_1 \ar@{-}[r] & -\gamma_3 \ar@{-}[r] & \gamma_2}}
\end{center}
is $-\sigma$, matching the GMN one, as predicted by our conjecture. Finally, all orientations of the diagram
\begin{center}
\centerline{
\xymatrix{\gamma_2 \ar@{-}[r] & \gamma_1 \ar@{-}[r] & -\gamma_3}}
\end{center}
equally contribute $-\frac{\sigma}{8}$ in the JS theory, and so $-\frac{\sigma}{2}$ in total. 

The upshot is that in this example there is a unique value we can assign to the singular term so that Conjecture \ref{mainConj} still makes sense and is verified, namely $\textrm{sing} = -\frac{\sigma}{2}$. We expect that this is always the case when we encounter singular GMN integrals, and we extend Conjecture \ref{mainConj} to include this claim. We will see another example of this behaviour in the next computation.
\subsection{Seiberg-Witten with $N_f = 2$}\label{Nf2Sect} We denote by $\gamma^1_1, \gamma^2_1$ and $\gamma^1_2, \gamma^2_2$ the vanishing cycles for the Hitchin fibration, and set $\sigma = \sigma(\gamma^1_1 + \gamma^2_1 + \gamma^1_2 + \gamma^2_2)$. There are two relations in $\Gamma$,
\begin{equation*}
\gamma^1_1 - \gamma^2_1  = \gamma^1_2 - \gamma^2_2 = 0,
\end{equation*} 
and the nonvanishing intersection products are given by 
\begin{equation*}
\bra \gamma^i_1, \gamma^j_2\ket = 1. 
\end{equation*}
Initially the BPS rays are given by
\begin{center}
\centerline{
\xymatrix{   &   &   &   &*=0{}\ar@{~>}[dllll]!U|{\ell^+_{\gamma^1_2} = \ell^+_{\gamma^2_2}}\ar[ddllll]!U|>>>>>>>>>>>>{\ell^-_{\gamma^1_1} = \ell^-_{\gamma^2_1}}\ar[ddddll]!U|>>>>>>>>>>>>{\ell^-_{\gamma^1_2}=\ell^-_{\gamma^2_2}}\ar@{~>}[ddddl]!DRRRR|{\ell^+_{\gamma^1_1}=\ell^+_{\gamma^2_1}}         \\
                 &   &   &   &           \\
                 &   &   &   &           \\
                 &   &   &   &           \\
                 &   &   &   &}}
\end{center}
So we can restrict to ``effective integrals" for $\Omega(a_1\gamma^1_1 + a_2\gamma^2_1 + a_3\gamma^1_2 + a_4\gamma^2_2, u^-)$ with positive $a_i$. For $\Omega(\gamma^1_1 + \gamma^2_1 + \gamma^1_2 + \gamma^2_2, u^-)$ we need to consider the diagrams
\begin{center}
\centerline{
\xymatrix{*+[F]{\gamma^1_1} \ar[r] & \gamma^1_2 \ar[r] & \gamma^2_1 \ar[r]& \gamma^2_2 & & \gamma^2_1 & \ar[l] \gamma^1_2 &*+[F]{\gamma^1_1}\ar[l]\ar[r] & \gamma^2_2 \\
*+[F]{\gamma^1_1} \ar[r] & \gamma^2_2 \ar[r] & \gamma^2_1 \ar[r]& \gamma^1_2 & & \gamma^2_1 & \ar[l] \gamma^2_2 &*+[F]{\gamma^1_1}\ar[l]\ar[r] & \gamma^1_2
}
}
\end{center}
By symmetry, the final result for $\Omega(\gamma^1_1 + \gamma^2_1 + \gamma^1_2 + \gamma^2_2, u^-)$ must be twice the sum of the contributions of the two top diagrams. In fact we (almost) already computed the upper left diagram, when dealing with 
\begin{center} 
\centerline{
\xymatrix{*+[F]{ \d} \ar[r] & \ar[r] \m \ar[r] & \d \ar[r] & 2\m}
}
\end{center}
in the $N_f = 0$ theory. The only difference is that now 
\begin{align*}
\W &= (-1)^4 \sigma(\gamma^1_1)\gamma^1_1 \bra \gamma^1_1, \sigma(\gamma^1_2)\gamma^1_2\ket \bra \gamma^1_2, \sigma(\gamma^2_1)\gamma^2_1\ket \bra \gamma^2_1, \sigma(\gamma^2_2)\gamma^2_2\ket\\
&= - (-1)^{\bra\gamma^1_1, \gamma^1_2\ket}(-1)^{\bra \gamma^1_1 + \gamma^1_2, \gamma^2_1 \ket}(-1)^{\bra \gamma^1_1 + \gamma^1_2 + \gamma^2_1 , \gamma^2_2\ket}\sigma\gamma^1_1\\
&= -\sigma \gamma^1_1
\end{align*}
and that of course we replace $2\m$ by $\gamma^2_2$. So we replace the integral \eqref{use4Nf2} with the ``singular" integral
\begin{equation}\label{Nf2_sing1}
-\sigma\frac{\gamma^1_1}{4\pi i}\int_{\ell_{\gamma^1_1 + \gamma^1_2}(u^-)}d\z_1\rho(\z, \z_1)\chi^{\sf}_{\gamma^1_1 + \gamma^1_2}(\z_1, u^+)\frac{1}{4\pi i} \int_{\ell_{\gamma^2_1}(u^-)}d\z_3\rho(\z_1, \z_3)\chi^{\sf}_{\gamma^2_1 + \gamma^2_2}(\z_3, u^+),
\end{equation}
while the analogue of \eqref{someIntegral} requires a contribution of $-\sigma$ to $f^{\gamma^1_1 + \gamma^2_1 + \gamma^1_2 + \gamma^2_2}(u^-)$. 

Passing now to the upper right diagram, we have
\begin{align*}
\W &= (-1)^4 \sigma(\gamma^1_1)\gamma^1_1 \bra \gamma^1_1, \sigma(\gamma^2_2)\gamma^2_2\ket \bra \gamma^1_1, \sigma(\gamma^1_2)\gamma^1_2\ket \bra \gamma^1_2, \sigma(\gamma^2_1)\gamma^2_1\ket\\
&= - (-1)^{\bra\gamma^1_1, \gamma^2_2\ket}(-1)^{\bra \gamma^1_1 + \gamma^2_2, \gamma^1_2\ket}(-1)^{\bra \gamma^1_1 + \gamma^2_2 + \gamma^1_2, \gamma^2_1\ket}\sigma\gamma^1_1\\
&= -\sigma \gamma^1_1
\end{align*}
and a corresponding integral
\begin{align*}
-\sigma\frac{\gamma^1_1}{4\pi i}&\int_{\ell_{\gamma^1_1}(u^+)}d\z_1\rho(\z, \z_1)\Xsf_{\gamma^1_1}(\z_1, u^+)\frac{1}{4\pi i}\int_{\ell_{\gamma^1_2}(u^+)}d\z_2\rho(\z_1, \z_2)\Xsf_{\gamma^1_2}(\z_2, u^+)\\
&\frac{1}{4\pi i}\int_{\ell_{\gamma^2_2}(u^+)}d\z_3\rho(\z_1, \z_3)\Xsf_{\gamma^2_2}(\z_3, u^+)\frac{1}{4\pi i}\int_{\ell_{\gamma^2_1}(u^+)}d\z_4\rho(\z_3, \z_4)\Xsf_{\gamma^2_1}(\z_4, u^+).
\end{align*}
Pushing $\ell_{\gamma^2_2}(u^+)$ to $\ell_{\gamma^2_2}(u^-)$ gives no residue. Then pushing $\ell_{\gamma^2_1}(u^+)$ to $\ell_{\gamma^2_1}(u^-)$ splits the integral as
\begin{align*}
-\sigma\frac{\gamma^1_1}{4\pi i}&\int_{\ell_{\gamma^1_1}(u^+)}d\z_1\rho(\z, \z_1)\Xsf_{\gamma^1_1}(\z_1, u^+)\frac{1}{4\pi i}\int_{\ell_{\gamma^1_2}(u^+)}d\z_2\rho(\z_1, \z_2)\Xsf_{\gamma^1_2}(\z_2, u^+)\\
&\frac{1}{4\pi i}\int_{\ell_{\gamma^2_2}(u^-)}d\z_3\rho(\z_1, \z_3)\Xsf_{\gamma^2_2}(\z_3, u^+)\frac{1}{4\pi i}\int_{\ell_{\gamma^2_1}(u^-)}d\z_4\rho(\z_3, \z_4)\Xsf_{\gamma^2_1}(\z_4, u^+).
\end{align*}
plus a residue term
\begin{align*}
\sigma\frac{\gamma^1_1}{4\pi i}&\int_{\ell_{\gamma^1_1}(u^+)}d\z_1\rho(\z, \z_1)\Xsf_{\gamma^1_1}(\z_1, u^+)\frac{1}{4\pi i}\int_{\ell_{\gamma^1_2}(u^+)}d\z_2\rho(\z_1, \z_2)\Xsf_{\gamma^1_2}(\z_2, u^+)\\
&\frac{1}{4\pi i}\int_{\ell_{\gamma^2_2}(u^-)}d\z_3\rho(\z_1, \z_3)\Xsf_{\gamma^2_1 + \gamma^2_2}(\z_3, u^+).
\end{align*}
It is clear that top order contributions can only come from the residue term. Using Fubini and pushing $\ell_{\gamma^1_1}(u^+)$ to $\ell_{\gamma^1_1}(u^-)$ we rewrite this as
\begin{align*}
\sigma\frac{\gamma^1_1}{4\pi i}&\int_{\ell_{\gamma^1_1}(u^-)}d\z_1\rho(\z, \z_1)\Xsf_{\gamma^1_1}(\z_1, u^+)\frac{1}{4\pi i}\int_{\ell_{\gamma^1_2}(u^+)}d\z_2\rho(\z_1, \z_2)\Xsf_{\gamma^1_2}(\z_2, u^+)\\
&\frac{1}{4\pi i}\int_{\ell_{\gamma^2_2}(u^-)}d\z_3\rho(\z_1, \z_3)\Xsf_{\gamma^2_1 + \gamma^2_2}(\z_3, u^+).
\end{align*} 
plus a residue, which we can write as
\begin{equation*}
\sigma\frac{\gamma^1_1}{4\pi i}\int_{\ell_{\gamma^1_2}(u^+)}d\z_2\rho(\z_1, \z_2)\Xsf_{\gamma^1_2}(\z_2, u^+)\frac{1}{4\pi i}\int_{\ell_{\gamma^1_1 + \gamma^2_1 + \gamma^2_2}(u^-)}d\z_3\rho(\z_1, \z_3)\Xsf_{\gamma^1_1 + \gamma^2_1 + \gamma^2_2}(\z_3, u^+).
\end{equation*}
Pushing $\ell_{\gamma^1_2}(u^+)$ to $\ell_{\gamma^1_2}(u^-)$ in the first integral gives a singular term  
\begin{equation}\label{Nf2_sing2}
\sigma\frac{\gamma^1_1}{4\pi i}\int_{\ell_{\gamma^1_1 + \gamma^1_2}(u^-)}d\z_1\rho(\z, \z_1)\Xsf_{\gamma^1_1 + \gamma^1_2}(\z_1, u^+)\frac{1}{4\pi i}\int_{\ell_{\gamma^2_2}(u^-)}d\z_3\rho(\z_1, \z_3)\Xsf_{\gamma^2_1 + \gamma^2_2}(\z_3, u^+),
\end{equation} 
while pushing $\ell_{\gamma^1_2}(u^+)$ to $\ell_{\gamma^1_2}(u^-)$ in the residue contributes $-\sigma$ to $f$. Finally combining the singular terms \eqref{Nf2_sing1} and \eqref{Nf2_sing2} contributes $\sigma$ to $f$: to see this just notice that in \eqref{Nf2_sing1} the ray $\ell_{\gamma^2_1}(u^-)$ approaches $\ell_{\gamma^1_1 + \gamma^1_2}(u^-)$ in the counterclockwise, while in \eqref{Nf2_sing2} the ray $\ell_{\gamma^2_2}(u^-)$ approaches it in the clockwise direction. So the total contribution of the upper diagrams to $f$ is $-\sigma$, and taking into account the bottom diagrams too shows $f = -2\sigma$, hence $\Omega = -2$.\\

\noindent\textbf{Singular integrals and comparison with JS.} Recall that we found
\begin{center}
\centerline{
\xymatrix{*+[F]{\gamma^1_1} \ar[r] & \gamma^1_2 \ar[r] & \gamma^2_1 \ar[r]& \gamma^2_2 \sim -\sigma + \operatorname{sing}_1\\ 
\gamma^2_1 & \ar[l] \gamma^1_2 &*+[F]{\gamma^1_1}\ar[l]\ar[r] & \gamma^2_2 \sim -\sigma + \operatorname{sing}_2}}
\end{center} 
where we write $\operatorname{sing}_1$ ($\operatorname{sing}_2$) for the ``value" of \eqref{Nf2_sing1} (respectively \eqref{Nf2_sing2}). In GMN theory these values are undetermined, and only satisfy the constraint $\operatorname{sing}_1 + \operatorname{sing}_2 = \sigma$. This ambiguity is resolved in Joyce-Song theory. We have 
\begin{equation*}
\begin{matrix}
\U(\gamma^1_1, \gamma^2_1, \gamma^1_2, \gamma^2_2) = \frac{1}{4},\,\,\,\U(\gamma^1_1, \gamma^1_2, \gamma^2_1, \gamma^2_2) = -\frac{1}{2},\,\,\,\U(\gamma^1_1, \gamma^1_2, \gamma^2_2, \gamma^2_1) = 0,\\
\U(\gamma^1_2, \gamma^1_1, \gamma^2_1, \gamma^2_2) = 0,\,\,\,\U(\gamma^1_2, \gamma^2_2, \gamma^1_1, \gamma^2_1) = -\frac{1}{4},\,\,\,\U(\gamma^1_2, \gamma^1_1, \gamma^2_2, \gamma^2_1) = \frac{1}{2}
\end{matrix} 
\end{equation*}
(with the symmetries $\gamma^1_1 \leftrightarrow \gamma^2_1$, $\gamma^1_2 \leftrightarrow \gamma^2_2$), from which we can compute (twisting by $\left(\prod (-1)^{\bra \alpha_i, \alpha_j\ket}\right)^{-1}\left(\prod \sigma(\alpha_k)\right) = -\sigma$) 
\begin{center}
\centerline{
\xymatrix{\gamma^1_1 \ar[r]& \gamma^1_2 & \ar[l]\gamma^2_1\ar[r] & \gamma^2_2\,\,\sim -4\cdot\frac{\sigma}{2^5},\,\,\,\,\,\,\,\,\,\gamma^1_1 \ar[r]& \gamma^1_2 \ar[r]& \gamma^2_1\ar[r] & \gamma^2_2\,\,\sim -4\cdot\frac{\sigma}{2^4}
}}
\end{center}
\begin{center}
\centerline{
\xymatrix{\gamma^1_1 & \ar[l]\gamma^1_2\ar[r] & \gamma^2_1 & \ar[l]\gamma^2_2\,\,\sim 4\cdot\frac{\sigma}{2^5}
}}
\end{center}
(the other orientations vanish). Similarly we have
\begin{center}
\centerline{
\xymatrix{\gamma^2_1 \ar[r]& \gamma^1_2 & \ar[l]\gamma^1_1\ar[r] & \gamma^2_2\,\,\sim -4\cdot\frac{\sigma}{2^5},\,\,\,\,\,\,\,\,\,\gamma^2_1 & \ar[l]\gamma^1_2 & \ar[l]\gamma^1_1 \ar[r] & \gamma^2_2\,\,\sim -4\cdot\frac{\sigma}{2^4}\\
\gamma^2_1 & \ar[l]\gamma^1_2\ar[r] & \gamma^1_1 & \ar[l]\gamma^2_2\,\,\sim -4\cdot\frac{\sigma}{2^5},\,\,\,\,\,\,\,\,\,\gamma^2_1 & \ar[l]\gamma^1_2\ar[r] & \gamma^1_1 \ar[r]& \gamma^2_2\,\,\sim -4\cdot\frac{\sigma}{2^4} 
}}
\end{center}
while the other orientations vanish. So for the top diagrams we get a JS result of $-\sigma$, which by symmetry is the same as the contribution of the bottom diagrams, giving a JS result of $-2\sigma$. Writing schematically the JS terms as
\begin{center}
\centerline{
\xymatrix{ \gamma^1_1 \ar@{-}[r] & \gamma^1_2 \ar@{-}[r] & \gamma^2_1 \ar@{-}[r]& \gamma^2_2 \sim -\frac{\sigma}{4}\\ 
\gamma^2_1 & \ar@{-}[l] \gamma^1_2 &\gamma^1_1\ar@{-}[l]\ar@{-}[r] & \gamma^2_2 \sim -\frac{3\sigma}{4} }}
\end{center}
we find Conjecture \ref{mainConj} holds if and only if we set $\operatorname{sing}_1 = \frac{3\sigma}{4}$, $\operatorname{sing}_2 = \frac{\sigma}{4}$. This is compatible with the constraint $\operatorname{sing}_1 + \operatorname{sing}_2 = \sigma$.\\

\noindent\textbf{Conjecture \ref{mainConj} and singular integrals.} Keeping the above examples in mind, we supplement Conjecture \ref{mainConj} stating the precise behaviour we expect for singular integrals.
\begin{enumerate}
\item[$\bullet$] Singular integrals in GMN theory should always either cancel out, as for \eqref{Nf1_I}, \eqref{Nf1_III}, or combine to give a well defined (nonsingular) integral, following the model case of \eqref{Nf2_sing1}, \eqref{Nf2_sing2}. Notice however that this process will involve singular integrals arising from different diagrams. 
\item[$\bullet$] This behaviour gives a set of contraints on the ``values" of the singular integrals, which however remain undetermined. We conjecture that there is a unique way of specifying preferred values for all singular integrals, which are compatible with the GMN constraints, and for which Conjecture 1 holds (that is, the value of a diagram in GMN and JS theories is the same).
\end{enumerate}
\subsection{Seiberg-Witten with $N_f = 3$} Initially we have BPS rays
\begin{center}
\centerline{
\xymatrix{   &   &   &   &*=0{}\ar@{~>}[dllll]!U|{\ell^+_{\gamma_2}}\ar[ddllll]!U|>>>>>>>>>>>>{\ell^-_{\gamma^{1,2,3,4}_1}}\ar[ddddll]!U|>>>>>>>>>>>>{\ell^-_{\gamma_2}}\ar@{~>}[ddddl]!DRRRR|{\ell^+_{\gamma^{1,2,3,4}_1}}         \\
                 &   &   &   &           \\
                 &   &   &   &           \\
                 &   &   &   &           \\
                 &   &   &   &}}
\end{center}
where $\gamma^{1,2,3,4}_1$ have the same image in $\Gamma$, and $\bra \gamma^i_1, \gamma_2\ket = 1$. We study the index $\Omega(\sum_i \gamma^i_1 + 2\gamma_2) = -2$. We would need to consider the diagrams 
\begin{center}
\centerline{
\xymatrix{ *+[F]{ \gamma^1_1}\ar[dr]    &                                                & \gamma^k_1  \\
                                                         &   2\gamma_2\ar[dl]\ar[dr]\ar[ur]   &                             \\
                        \gamma^i_1                &                                                & \gamma^j_1 }
}
\end{center}
(a single diagram up to symmetry)
\begin{center}
\centerline{
\xymatrix{ *+[F]{ \gamma^1_1}\ar[dr]    &                                        &  &\\
               \gamma^i_1                         &   \gamma_2\ar[r]\ar[l]\ar[dl]& \gamma^k_1\ar[r] &\gamma_2\\
                \gamma^j_1                        &                                        & &}
}
\end{center}
(3 distinguished diagrams)
\begin{center}
\centerline{
\xymatrix{  \gamma^i_1    &                                          &  &\\
                \gamma^j_1    &   \gamma_2\ar[ul]\ar[l]\ar[dl] & *+[F]{ \gamma^1_1}\ar[l]\ar[r] &\gamma_2\\
                \gamma^k_1    &                                         & &}
}
\end{center}
(a single diagram)
\begin{center}
\centerline{
\xymatrix{ *+[F]{ \gamma^1_1}\ar[dr]    &                                 &  & &\\
                                                         &   \gamma_2\ar[dl]\ar[r]& \gamma^j_1\ar[r] &\gamma_2\ar[r]&\gamma^k_1\\
                                   \gamma^i_1    &                                 & & &}
}
\end{center}
(6 distinguished diagrams). We only discuss the first diagram in detail. We have
\begin{align*}
\W &= (-1)^5 \sigma(\gamma^1_1)\gamma^1_1 \bra \gamma^1_1, \frac{1}{2}\sigma(2\gamma_2)\gamma_2\ket \bra 2\gamma_2, \sigma(\gamma^i_1)\gamma^i_1\ket \bra 2\gamma_2, \sigma(\gamma^j_1)\gamma^j_1\ket \bra 2\gamma_2, \sigma(\gamma^k_1)\gamma^k_1\ket\\
&= (-1)^{\bra \gamma^1_1, 2\gamma_2\ket}(-1)^{\bra \gamma^1_1 + 2\gamma_2, \gamma^i_1 \ket}(-1)^{\bra \gamma^1_1 + 2\gamma_2 + \gamma^i_1, \gamma^j_1\ket}(-1)^{\bra \gamma^1_1 + 2\gamma_2 + \gamma^i_1 + \gamma^j_1, \gamma^k_1\ket}4\sigma\gamma^1_1\\
&= 4\sigma\gamma^1_1
\end{align*}
(where we put $\sigma = \sigma(\sum_i \gamma^i_1 + 2\gamma_2)$) and an integral
\begin{align*}
4\sigma\frac{\gamma^1_1}{4\pi i}&\int_{\ell_{\gamma^1_1}(u^+)}d\z_1\rho(\z, \z_1)\Xsf_{\gamma^1_1}(\z_1, u^+)\frac{1}{4\pi }\int_{\ell_{\gamma_2}(u^+)}d\z_2\rho(\z_1, \z_2)\Xsf_{2\gamma_2}(\z_2, u^+)\\
&\frac{1}{4\pi i}\int_{\ell_{\gamma^2_1}(u^+)}d\z_3\rho(\z_2, \z_3)\Xsf_{\gamma^2_1}(\z_3, u^+) \frac{1}{4\pi i}\int_{\ell_{\gamma^3_1}(u^+)}d\z_4\rho(\z_2, \z_4)\Xsf_{\gamma^3_1}(\z_4, u^+) \\
&\frac{1}{4\pi i}\int_{\ell_{\gamma^4_1}(u^+)}d\z_5\rho(\z_2, \z_5)\Xsf_{\gamma^4_1}(\z_5, u^+).  
\end{align*}
Pushing $\ell_{\gamma^1_1}(u^+)$ to $\ell_{\gamma^1_1}(u^-)$, then $\ell_{\gamma_2}(u^+)$ to $\ell_{\gamma_2}(u^-)$ gives a principal part
\begin{align*}
4\sigma\frac{\gamma^1_1}{4\pi i}&\int_{\ell_{\gamma^1_1}(u^-)}d\z_1\rho(\z, \z_1)\Xsf_{\gamma^1_1}(\z_1, u^+)\frac{1}{4\pi }\int_{\ell_{\gamma_2}(u^-)}d\z_2\rho(\z_1, \z_2)\Xsf_{2\gamma_2}(\z_2, u^+)\\
&\frac{1}{4\pi i}\int_{\ell_{\gamma^2_1}(u^+)}d\z_3\rho(\z_2, \z_3)\Xsf_{\gamma^2_1}(\z_3, u^+) \frac{1}{4\pi i}\int_{\ell_{\gamma^3_1}(u^+)}d\z_4\rho(\z_2, \z_4)\Xsf_{\gamma^3_1}(\z_4, u^+) \\
&\frac{1}{4\pi i}\int_{\ell_{\gamma^4_1}(u^+)}d\z_5\rho(\z_2, \z_5)\Xsf_{\gamma^4_1}(\z_5, u^+),  
\end{align*} 
plus a residue
\begin{align*}
4\sigma\frac{\gamma^1_1}{4\pi i}&\int_{\ell_{\gamma^1_1 + 2\gamma_2}(u^-)}d\z_1\rho(\z, \z_1)\Xsf_{\gamma^1_1 + 2\gamma_2}(\z_1, u^+)\frac{1}{4\pi i}\int_{\ell_{\gamma^2_1}(u^+)}d\z_3\rho(\z_1, \z_3)\Xsf_{\gamma^2_1}(\z_3, u^+)\\
&\frac{1}{4\pi i}\int_{\ell_{\gamma^3_1}(u^+)}d\z_4\rho(\z_1, \z_4)\Xsf_{\gamma^3_1}(\z_4, u^+)\frac{1}{4\pi i}\int_{\ell_{\gamma^4_1}(u^+)}d\z_5\rho(\z_1, \z_5)\Xsf_{\gamma^4_1}(\z_5, u^+).
\end{align*}
Clearly the only top order contributions can come from the residue term. Pushing $\ell_{\gamma^i_1}(u^+)$ to $\ell_{\gamma^i_1}(u^-)$ (for $i = 2, 3, 4$) we get a term
\begin{equation*}
(-1)^3  4\sigma\frac{\gamma^1_1}{4\pi i}\int_{\ell_{\gamma^1_1 + \gamma^2_1 + \gamma^3_1 + \gamma^4_1 + 2\gamma_2}(u^-)}d\z_1\rho(\z, \z_1)\Xsf_{\gamma^1_1 + \gamma^2_1 + \gamma^3_1 + \gamma^4_1 + 2\gamma_2}(\z_1, u^+),
\end{equation*}
and so a contribution of $4\sigma$ to $f$. 

On the JS side, we have 
\begin{equation*}
\begin{matrix}
\U(\gamma^1_1, \gamma^2_1, \gamma^3_1, \gamma^4_1, 2\gamma_2; s, w) = \frac{1}{24},\,\,\,\U(\gamma^1_1, \gamma^2_1, \gamma^3_1, 2\gamma_2,\gamma^4_1; s, w) = -\frac{1}{6},\\
\U(\gamma^1_1, \gamma^2_1, 2\gamma_2,\gamma^3_1, \gamma^4_1; s, w) = \frac{3}{2},\,\,\,\U(\gamma^1_1, 2\gamma_2,\gamma^2_1, \gamma^3_1, \gamma^4_1; s, w) = -\frac{1}{6},\\
\U(2\gamma_2, \gamma^1_1, \gamma^2_1, \gamma^3_1, \gamma^4_1; s, w) = -\frac{1}{24},
\end{matrix}
\end{equation*}
from which we get, summing over permutations of $\gamma^1_1, \gamma^2_1, \gamma^3_1, \gamma^4_1$, and twisting by $\left(\prod (-1)^{\bra \alpha_i, \alpha_j\ket}\right)^{-1}\left(\prod \sigma(\alpha_k)\right) = \sigma$,
\begin{center}
\centerline{
\xymatrix{ \gamma^i_1\ar[dr]    &                                                & \gamma^l_1\ar[dl] &   &      
              \gamma^i_1\ar[dr]    &                                                & \gamma^l_1\\
                                                         &   2\gamma_2                             &  \sim \frac{\sigma}{4}       &  &
                                                         &   2\gamma_2\ar[ur]                     & \sim \sigma                \\
                        \gamma^j_1\ar[ur]       &                                                & \gamma^k_1\ar[ul]  &  &
                        \gamma^j_1\ar[ur]       &                                                & \gamma^k_1\ar[ul]\\
}}
\end{center}
\begin{center}
\centerline{
\xymatrix{& &\gamma^i_1 & &\gamma^l_1 & &\\
                                                                        & & &2\gamma^2_2\ar[ur]\ar[dr]\ar[dl]\ar[ul]& \sim \frac{\sigma}{4} & &\\
                                                                        & &\gamma^j_1 & &\gamma^k_1 & &}}
\end{center}
\begin{center}
\centerline{
\xymatrix{               \gamma^i_1\ar[dr]    &                                                & \gamma^l_1&   &      
              \gamma^i_1\ar[dr]    &                                                & \gamma^l_1\\
                                                         &   2\gamma_2\ar[ur]\ar[dr]             &  \sim \frac{3\sigma}{2}       &  &
                                                         &   2\gamma_2\ar[ur]\ar[dr]\ar[dl]    & \sim \sigma                \\
                        \gamma^j_1\ar[ur]      &                                                  & \gamma^k_1  &  &
                        \gamma^j_1               &                                                   & \gamma^k_1\\                                                                        
}}
\end{center}

\vspace{.5cm}
Dipartimento di Matematica ``F. Casorati"\\
Universit\`a di Pavia, via Ferrata 1, 27100 Pavia ITALY\\
\tt{jacopo.stoppa@unipv.it}
\end{document}